\numberwithin{equation}{section}
\newtheorem{thm}{Theorem}[section]
\newtheorem{cor}[thm]{Corollary}
\newtheorem{lem}[thm]{Lemma}
\newtheorem{prop}[thm]{Proposition}
\def\N{{{\Bbb N}}}
\def\R{\mathbb{R}}
\def\T{\mathbb{T}}
\theoremstyle{remark}
\newtheorem{rem}[thm]{Remark}
\theoremstyle{example}
\newcommand{\vertiii}[1]{{\left\vert\kern-0.25ex\left\vert\kern-0.25ex\left\vert #1
    \right\vert\kern-0.25ex\right\vert\kern-0.25ex\right\vert}}
\begin{document}

\title[
New estimates for the maximal functions
]{New estimates for the maximal functions \\ and applications}
\author
{\'Oscar Dom\'inguez}

\address{O. Dom\'inguez, Departamento de An\'alisis Matem\'atico y Matem\'atica Aplicada, Facultad de Matem\'aticas, Universidad Complutense de Madrid\\
Plaza de Ciencias 3, 28040 Madrid, Spain.}
\email{oscar.dominguez@ucm.es}

\author{Sergey Tikhonov}

\subjclass[2010]{Primary   46E35, 42B35;  Secondary 26A15, 46E30, 46B70}
\keywords{Sharp maximal function; Stein--Zygmund embedding;
 Fefferman--Stein's inequality; Moduli of smoothness;
Extrapolations}
 \address{S. Tikhonov, Centre de Recerca Matem\`{a}tica\\
Campus de Bellaterra, Edifici C
08193 Bellaterra (Barcelona), Spain;
ICREA, Pg. Llu\'{i}v Companys 23, 08010 Barcelona, Spain,
 and Universitat Aut\`{o}noma de Barcelona.}
\email{ stikhonov@crm.cat}

\maketitle

\bigskip
\begin{abstract}
	In this paper we study sharp pointwise inequalities for maximal operators. In particular, we strengthen 
 DeVore's inequality for the moduli of smoothness and a logarithmic variant of  Bennett--DeVore--Sharpley's inequality for rearrangements. As a consequence, we improve the classical  Stein--Zygmund embedding
deriving
$		\dot{B}^{d/p}_\infty L_{p,\infty}(\R^d) \hookrightarrow \text{BMO}(\R^d)$ for
$1 < p < \infty$. Moreover, these results are also applied to establish new Fefferman--Stein inequalities, Calder\'on--Scott type inequalities, and extrapolation estimates. Our approach is based on the
 limiting interpolation  techniques. 

\end{abstract}

\section{Notation}

As usual, $\R^d$ denotes the Euclidean $d$-space, $\T = [0, 2 \pi]$ is the unit circle, 
  and $\N_0 = \N\cup \{0\}$.
For $1 \leq p \leq \infty$, $p'$ is defined by $\frac{1}{p} + \frac{1}{p'}=1$.

By $Q$ or  $Q_0$ we denote cubes in $\R^d$ with sides parallel to the coordinate axes. Moreover,  $2 Q$ stands for the cube, which has the same center as $Q$ whose side length is twice that of $Q$.

Let $\mu$ be a Radon 
 positive measure on $\R^d$. Throughout this paper we assume that $\mu$ satisfies the doubling condition, i.e., there exists a constant $c_\mu > 0$ such that
\begin{equation*}
	\mu (2Q) \leq c_\mu \,\mu(Q)
\end{equation*}
for all cubes $Q$. The distribution function of a scalar-valued measurable function $f$ defined on $\R^d$ is
 \begin{equation*}
 	\mu_f(t) = \mu \{x \in \R^d : |f(x)| > t\}, \quad t >0,
 \end{equation*}
the non-increasing rearrangement is given by
\begin{equation*}
	f^*_\mu(t) = \inf \{\lambda \geq 0 : \mu_f(\lambda) \leq t\}
\end{equation*}
and
\begin{equation*}
	f^{**}_\mu(t) = \frac{1}{t} \int_0^t f^*_\mu(s) \, ds.
\end{equation*}
In the special case that $\mu = |\cdot|_d$, the Lebesgue measure on $\R^d$, we write simply $f^*$ and $f^{**}$, respectively.

We will assume that $A\lesssim B$ means that $A\leq C B$ with a positive constant $C$ depending only on nonessential parameters.
If $A\lesssim B\lesssim A$, then $A\asymp B$.

\subsection{Maximal functions}

For $f \in L_1(Q_0, \mu)$, the local sharp maximal function 
 is defined by
\begin{equation*}
	f^{\#}_{Q_0;\mu}(x) = \sup_{x \in Q, Q \subset Q_0} \frac{1}{\mu(Q)} \int_Q |f(y) - f_{Q;\mu}| \, d\mu (y), \quad x \in Q_0,
\end{equation*}
where $f_{Q;\mu} = \frac{1}{\mu(Q)} \int_Q f d\mu$. The space $\text{BMO}(Q_0,\mu)$ consists of all $f \in L_1(Q_0,\mu)$ such that
\begin{equation*}
	\|f\|_{\text{BMO}(Q_0,\mu)} = \sup_{x \in Q_0} f^{\#}_{Q_0;\mu}(x) < \infty.
\end{equation*}
 Similarly, given a locally integrable function $f$ on $\R^d$, we define
\begin{equation*}
	f^{\#}_\mu(x) = \sup_{x \in Q} \frac{1}{\mu(Q)} \int_Q |f(y) - f_{Q;\mu}| \, d \mu(y), \quad x \in \R^d,
\end{equation*}
and $\|f\|_{\text{BMO}(\R^d, \mu)} = \sup_{x \in \R^d}f^{\#}_\mu(x) < \infty.$ When $\mu = |\cdot|_d$,  we use the notation $f_Q, f_{Q_0}^{\#}, f^{\#}, \text{BMO}(Q_0)$, and $\text{BMO}(\R^d)$.


Let $s \in (0,1)$. The Str\"omberg--Jawerth--Torchinsky local maximal operator \cite{Stromberg, JawerthTorchinsky} is defined by
\begin{equation*}
	M^{\#}_{s, Q_0;\mu} f (x) = \sup_{x \in Q, Q \subset Q_0} \inf_{c \in \R} \inf \big\{\alpha \geq 0 : \mu\{y \in Q: |f(y) - c| > \alpha\} < s \mu(Q)\big\}, \quad x \in Q_0.
\end{equation*}
Again if $\mu$ is the Lebesgue measure we write $M^{\#}_{s, Q_0} f$. 

The relationship between $f^{\#}_{Q_0;\mu}$ and $M^{\#}_{s, Q_0;\mu} f$ is given by the well-known equivalence
\begin{equation}\label{EquivMax}
	(f^{\#}_{\mu})^*_\mu (t) \asymp (M^{\#}_{s;\mu} f)^{**}_\mu(t)
\end{equation}
provided that $s$ is small enough; see \cite[Lemma 3.4]{JawerthTorchinsky}.

We  also consider the modification 
 of the Str\"omberg--Jawerth--Torchinsky maximal function given by
\begin{equation*}
	\overline{M}^{\#}_{s, Q_0} f (x) = \sup_{x \in Q, Q \subset Q_0}  \inf \big\{\alpha \geq 0 : \big|\{y \in Q: |f(y) - f_Q| > \alpha\}\big|_d \leq s |Q|_d\big\}, \quad x \in Q_0.
\end{equation*}

\subsection{Function spaces}\label{SectionFunctionSpaces}

 If $0 < p, q \leq \infty$ and $-\infty < b < \infty$, then $L_{p,q}(\log L)_b(\R^d,\mu)$ stands for the \emph{Lorentz--Zygmund space} formed by all (equivalence classes of) measurable functions $f$ defined on $\R^d$ such that
\begin{equation}\label{LZ}
	\|f\|_{L_{p,q}(\log L)_b(\R^d,\mu)} = \Big(\int_0^\infty (t^{1/p} (1 + |\log t|)^b f^*_\mu(t))^q \frac{dt}{t} \Big)^{1/q} < \infty
\end{equation}
(where the integral should be replaced by the supremum if $q=\infty$).
  When $p=\infty$ we assume that $b < -1/q \, (b \leq 0 \text{ if } q=\infty)$, otherwise $L_{\infty,q}(\log L)_b(\R^d,\mu)=\{0\}$. The quasi-norm \eqref{LZ} can be equivalently characterized in terms of $\mu_f(t)$ (cf. \cite[Proposition 2.2.5]{Carro}). Again the symbol $\mu$ will be omitted in the notation of Lorentz--Zygmund spaces when we deal with the Lebesgue measure.

The Lorentz--Zygmund spaces $L_{p,q}(\log L)_b(Q_0,\mu)$, as well as their periodic counterparts for the Lebesgue measure $L_{p,q}(\log L)_b(\T)$, are introduced in a similar way but now the integration in \eqref{LZ} extends over the interval $(0,1)$. For more details on Lorentz--Zygmund spaces, we refer to \cite{BennettRudnick, BennettSharpley, Carro}.

In particular,
setting $b=0$ in $L_{p,q}(\log L)_b(Q_0,\mu)$ we obtain the classical Lorentz spaces $L_{p,q}(Q_0,\mu)$ and
 \begin{equation*}
 	\|f\|_{L_{p,q}(Q_0,\mu)} \asymp \Big(\int_0^\infty t^q (\mu_f(t))^{q/p} \frac{d t}{t} \Big)^{1/q}.
 \end{equation*}
Moreover, letting $p=q < \infty$ in $L_{p,q}(\log L)_b(Q_0,\mu)$  we recover the Zygmund spaces $L_{p} (\log L)_b(Q_0,\mu)$ and
 \begin{equation*}
 	\|f\|_{L_{p} (\log L)_b(Q_0,\mu)} \asymp \Big(\int_{Q_0} [|f(x)| \log^b (e + |f(x)|)]^p \, d \mu(x) \Big)^{1/p}.
 \end{equation*}
For $b=0$, $L_{p}(\log L)_b(Q_0,\mu)= L_p(Q_0,\mu)$. If $p=\infty$ and $b < 0$, then $L_\infty (\log L)_b(Q_0,\mu)$ coincides with the Orlicz space of exponentially integrable functions $\text{exp}\, L^{-1/b}(Q_0,\mu)$, i.e.,
 \begin{equation*}
 	\text{exp}\, L^{-1/b}(Q_0,\mu) = \Big\{f : \int_{Q_0} \exp (\lambda |f(x)|^{-1/b}) \, d \mu (x) < \infty \quad \text{for some} \quad \lambda = \lambda(b, f) >0 \Big\}.
 \end{equation*}
 In the special case $b=-1$ we simply write $\text{exp}\, L (Q_0,\mu)$.

Let us now define the smooth function spaces that are useful in this paper.  Assume that  $1 < p < \infty, 1 \leq q \leq \infty$, and $-\infty < s, b < \infty$.
Let $\dot{H}^s L_{p,q} (\log L)_b(\R^d)$ be the \emph{Riesz-potential space}
 defined as  the completion of $C^\infty_0(\R^d)$ with respect to the functional
\begin{equation*}
	\|f\|_{\dot{H}^s L_{p,q}(\log L)_b(\R^d)} = \|(-\Delta)^{s/2} f\|_{L_{p,q}(\log L)_b(\R^d)}.
\end{equation*}
In particular, setting $q=p$ and $b=0$ one recovers the classical space $\dot{H}^s_p(\R^d) = \dot{H}^s L_p(\R^d)$. Note that if $k \in \N$ then $\dot{H}^k L_{p,q}(\log L)_b(\R^d)$ coincides (up to equivalence constants of semi-quasi-norms) with the \emph{(homogeneous) Lorentz--Zygmund--Sobolev space} $\dot{W}^k L_{p,q}(\log L)_b(\R^d)$, where
\begin{equation*}
	 \|f\|_{\dot{W}^k L_{p,q} (\log L)_b(\R^d)} = \|\, |\nabla^k f| \, \|_{L_{p,q}(\log L)_b(\R^d)}
\end{equation*}
and $|\nabla^k f (x) | = \Big(\sum_{|\alpha|_1 = k} |D^\alpha f (x)|^2 \Big)^{1/2}$. We shall also need their inhomogeneous counterparts $W^k L_{p,q} (\log L)_b(\mathcal{X}), \, \mathcal{X} \in \{\R^d, Q_0\},$ formed by all those $k$-times weakly differentiable functions $f$ on $\mathcal{X}$ such that
\begin{equation*}
	\|f\|_{W^k L_{p,q} (\log L)_b(\mathcal{X})} = \sum_{m=0}^k \|\, |\nabla^m f| \,\|_{ L_{p,q} (\log L)_b(\mathcal{X})} < \infty.
\end{equation*}
In particular, the choice $p=q$ and $b=0$ yields the classical Sobolev spaces $W^k_p(\mathcal{X})  = W^k L_p(\mathcal{X})$.

For $k \in \N$, we let $\omega_k(f,t)_{p,q,b; \R^d}$ denote the $k$-th order \emph{moduli of smoothness} of $f \in L_{p,q} (\log L)_b(\R^d)$ defined by
	\begin{equation*}
		\omega_k(f,t)_{p,q,b;\R^d} = \sup_{|h| \leq t} \| \Delta^k_h f\|_{L_{p,q}(\log L)_b (\R^d)}, \qquad t > 0,
	\end{equation*}
	where $\Delta^k_h f$ is the $k$-th  difference of $f$ with step $h$, that is,
	\begin{equation*}
		\Delta^1_h f(x) = \Delta_h f(x) = f(x+h)-f(x), \quad \Delta^k_h = \Delta_h \Delta^{k-1}_h, \quad h \in \R^d.
	\end{equation*}
	As usual, in the definition of $\omega_k(f,t)_{p,q,b;\T}$ in the case of $2 \pi$-periodic functions, the norm is taken over all of $\T$, while the modulus of smoothness on the cube $Q$ is given by
$
\omega_k(f,t)_{p,q,b;Q}= \sup_{|h| \leq t} \| \Delta^k_h f\|_{L_{p,q}(\log L)_b(Q_{kh})},
$
where the set $Q_{kh}:=\{x:x, x+kh\in Q\}.$ To simplify notation, we write $\omega_k(f,t)_{p,q,b}=\omega_k(f,t)_{p,q,b; \mathcal{X}}$ for $f \in L_{p,q} (\log L)_b(\mathcal{X}), \, \mathcal{X} \in \{\R^d, \T, Q\}$. Although we use the same notation for the moduli of smoothness of functions in $\R^d, \T$ and $Q$, this should hopefully cause no confusion, as the meaning should be clear from the context.
	
	Let $s > 0,$ $-\infty < b, \xi < \infty$, and $0 < r \leq \infty$. The \emph{(homogeneous) Besov-type space} $\dot{B}^{s,\xi}_r L_{p,q} (\log L)_b(\R^d)$ is defined
as  the completion of $C^\infty_0(\R^d)$ with respect to the (quasi-semi)-norm
	\begin{equation*}
		\|f\|_{\dot{B}^{s,\xi}_r L_{p,q} (\log L)_b(\R^d);k} = \Big(\int_0^\infty (t^{-s} (1 + |\log t|)^\xi \omega_k (f,t)_{p,q,b})^r \frac{dt}{t} \Big)^{1/r} < \infty
	\end{equation*}
	(with the usual modification if $r=\infty$). Let $B^{s,\xi}_r L_{p,q} (\log L)_b(\R^d)$ be the corresponding inhomogeneous  space, endowed with
	\begin{equation*}
		\|f\|_{B^{s,\xi}_r L_{p,q} (\log L)_b(\R^d);k} = \|f\|_{L_{p,q} (\log L)_b(\R^d)} +  \Big(\int_0^1 (t^{-s} (1 + |\log t|)^\xi \omega_k (f,t)_{p,q,b})^r \frac{dt}{t} \Big)^{1/r} < \infty
	\end{equation*}
	(with the usual modification if $r=\infty$). It is well known that these definitions are independent of $k$, in the sense that different choices of $k \in \N$ with $k > s$ give equivalent (quasi-semi-)norms on Besov spaces. Nevertheless, our notation is justified by the fact that the 
  equivalence constants may depend on $k$, which plays a key role to establish extrapolation estimates.

Letting $p=q$ and $b=0$ in $\dot{B}^{s,\xi}_r L_{p,q}(\log L)_b(\R^d)$ we recover the logarithmic Besov space $\dot{B}^{s,\xi}_r L_{p}(\R^d) = \dot{B}^{s,\xi}_{p,r}(\R^d)$ (cf. \cite{DominguezTikhonov} and the list of references given there).
    If $\xi=0$ and $b=0$ then $\dot{B}^{s,\xi}_r L_{p,q}(\log L)_b(\R^d) = \dot{B}^s_r L_{p,q}(\R^d)$, a Lorentz--Besov space (cf. \cite{GogatishviliOpicTikhonovTrebels, SeegerTrebels} and the references therein); if moreover $p=q$ then we arrive at  the classical Besov space $\dot{B}^s_{p,r}(\R^d)$ (see \cite{BennettSharpley, BerghLofstrom, Triebel83}).
	
	Similarly, one can define the periodic spaces $\dot{B}^{s,\xi}_r L_{p,q}(\log L)_b(\T), \, B^{s,\xi}_r L_{p,q}(\log L)_b(\T)$ and the spaces $\dot{B}^{s,\xi}_r L_{p,q}(\log L)_b(Q_0), \, B^{s,\xi}_r L_{p,q}(\log L)_b(Q_0)$ for cubes $Q_0 \subset \R^d$.

\subsection{Limiting interpolation}

Let $(A_0, A_1)$ be a compatible pair of quasi-Banach spaces. For all $f \in A_0+A_1$ and $t > 0$, the \emph{Peetre $K$-functional} is defined by
\begin{equation*}
	K(t,f) = K(t, f; A_0, A_1) = \inf_{\substack{f = f_0 + f_1 \\ f_i \in A_1, i=0,1}} \{\|f_0\|_{A_0} + t \|f_1\|_{A_1}\}.
\end{equation*}
Let $0 < \theta < 1, -\infty < b < \infty$, and $0 < q \leq \infty$. The \emph{real interpolation space} $(A_0,A_1)_{\theta,q;b}$ is the collection of all those $f \in A_0+A_1$ for which
\begin{equation}\label{interpolationnorm}
	\|f\|_{(A_0,A_1)_{\theta,q;b}} = \left(\int_0^\infty (t^{-\theta} (1 + |\log t|)^b K(t,f))^q \frac{dt}{t} \right)^{1/q} < \infty
\end{equation}
(with the usual modification if $q=\infty$). See \cite{BrudnyiKrugljak, EvansOpicPick, GogatishviliOpicTrebels, Gustavsson}. In particular, letting $b=0$ we recover the classical interpolation space $(A_0, A_1)_{\theta,q}$ (cf. \cite{BennettSharpley, BerghLofstrom, Triebel78}).

Given two (quasi-semi-)normed spaces $A_0$ and $A_1$, we write $A_0 \hookrightarrow A_1$ if $A_0 \subset A_1$ and the natural embedding from $A_0$ into $A_1$ is continuous. The space $A'$ is the dual space of the Banach space $A$.

It is easy to see that if $\theta=0$ or $\theta=1$ in \eqref{interpolationnorm} then we only obtain trivial spaces in general; that is why the definition of meaningful limiting interpolation spaces requires some modifications of the classical interpolation norm \eqref{interpolationnorm}. For simplicity,  assume that the pair $(A_0, A_1)$ is {ordered}, that is, $A_1 \hookrightarrow A_0$. It is not difficult to check that  \eqref{interpolationnorm} is equivalent to that obtained by replacing the interval $(0, \infty)$ to the smaller interval $(0,1)$. Namely, for $\theta\in (0,1)$,
\begin{equation*}
	\|f\|_{(A_0,A_1)_{\theta,q;b}} \asymp \left(\int_0^1 (t^{-\theta} (1 + |\log t|)^b K(t,f))^q \frac{dt}{t} \right)^{1/q}.
\end{equation*}
This basic observation together with the finer tuning given by logarithmic weights is the key to introduce \emph{limiting interpolation methods}. The space $(A_0, A_1)_{(1,b),q}$ is formed by all those $f \in A_0$ satisfying
\begin{equation}\label{DefLimInt}
	\|f\|_{(A_0,A_1)_{(1,b),q}} = \left(\int_0^1 (t^{-1} (1 + |\log t|)^{b} K(t,f))^q \frac{dt}{t} \right)^{1/q} < \infty.
\end{equation}
Here $b < -1/q \, (b \leq 0 \text{ if } q=\infty)$,  since otherwise $(A_0, A_1)_{(1,b),q}$ becomes the trivi\-al space, in the sense that it contains the zero element only. For our purposes, it is enough to work with ordered pairs $(A_0, A_1)$ and the limiting interpolation space with $\theta = 1$, but one may also consider limiting interpolation methods for general quasi-Banach pairs and $\theta=0$. For detailed information on limiting interpolation, we refer the reader to \cite{AstashkinLykovMilman, DominguezHaroskeTikhonov, DominguezTikhonov, EvansOpic, EvansOpicPick, GogatishviliOpicTrebels} and the references therein.

\bigskip
\section{New estimates of the maximal functions} 

\subsection{Estimates of the maximal functions in terms of measures of smoothness}\label{SectionEstimSmooth}
It is well known that there are strong relationships between
the oscillation of a function and its smoothness properties, see, e.g., \cite{DeVore}, \cite{DeVoreSharpley}, \cite{Garsia}, \cite{JohnNirenberg},  \cite{Kolyada86}, \cite{Kolyada87}, \cite{Kolyada87b}, \cite{Kolyada}, \cite{Kolyada99},  \cite{KolyadaLerner}, \cite{MartinMilman}, \cite{MartinMilman14}.

In this section we focus on the following well-known inequality for Besov functions on the real line
\begin{equation}\label{EmbD}
	\|f\|_{\text{BMO}(\R)} \lesssim \|f \|_{\dot{B}^{1/p}_{p,\infty}(\R);1}, \quad 1 < p < \infty.
\end{equation}
At the quantitative level, one may ask for pointwise estimates involving the maximal function $f^{\#}$ and the moduli of smoothness $\omega_1(f,t)_p$. The answer to this question was addressed by DeVore \cite[Theorem 1]{DeVore}. Namely, he showed that
\begin{equation}\label{DeVMax}
	f^{\# *}(t) \lesssim \sup_{t < u < \infty} u^{-1/p} \omega_1(f,u)_{p}, \qquad t > 0,
\end{equation}
for $f \in L_p(\R), \, 1 \leq p < \infty$. It is clear that \eqref{DeVMax} implies 
\eqref{EmbD}. Inequality \eqref{DeVMax} has been extended in several ways by Kolyada. The limiting case $p=1$ in \eqref{DeVMax} is of special interest, since 
  it yields  the quantitative version of the known embedding $\text{BV}(\R) \hookrightarrow \text{BMO}(\R)$. Note that in this case  inequality \eqref{DeVMax} can be equivalently written as
\begin{equation}\label{DeVMaxp=1}
	t f^{\# *}(t) \lesssim \omega_1(f,t)_{1}, \quad f \in L_1(\R),
\end{equation}
which complements the important inequality
\begin{equation}\label{DeVMaxp=1*}
	f^{**} (t) \lesssim \int_{t^{1/d}}^\infty \frac{\omega_d(f,u)_{1}}{u^d} \frac{du}{u}, \quad f \in  L_1(\R^d).
\end{equation}
The latter admits extensions to the moduli of smoothness based on $L_p(\R^d)$ (see, e.g.,  \cite[Chapter 5, Theorem 4.19]{BennettSharpley}) or, more generally, to r.i. spaces \cite[Corollary 1]{MartinMilman}. In the special case $d=1$,  inequalities \eqref{DeVMaxp=1} and \eqref{DeVMaxp=1*} were improved by Kolyada and Lerner. Namely, they showed that (cf. \cite[(24)]{KolyadaLerner})
\begin{equation}\label{KL}
	t f^{**}(t) \lesssim \omega_1(f,t)_{1}, \quad f \in L_1(\R).
\end{equation}
In the multivariate case  the following estimate was obtained by Kolyada \cite[Theorem 1]{Kolyada}: If $f \in L_1(\R^d)$ then
\begin{equation}\label{KL--}
	t^d f^{**}(t^d) \lesssim t\|f\|_{L_1(\R^d)} + \omega_1(f,t)_{1}, \quad t \in \big(0, 2^{-1/d} 
\big).
\end{equation}

Our first result sharpens  DeVore's inequality \eqref{DeVMax} for $p<\infty$ and Kolyada's estimate \eqref{KL--} for $p=1$.

\begin{thm}\label{ThmDeVoreLorentz}
	Let  $k \in \N$ and $t > 0$.
	
	Assume $f \in L_{p,q}(\R^d)$ with $1 < p < \infty$ and $0 <  q \leq \infty$. Then
	\begin{equation}\label{ThmDeVore*Lorentz}
	t^{-d/p}  \Big(\int_0^{t^d} (u^{1/p} f^{\# *}(u))^q \frac{du}{u} \Big)^{1/q} \lesssim \sup_{ t < u < \infty} u^{-d/p} \omega_k(f,u)_{p,q} \qquad \text{if} \quad k > d/p,
\end{equation}
and
\begin{equation}\label{ThmDeVore*Lorentz2}
	  \Big(\int_0^{t^d} (u^{1/p} f^{\# *}(u))^q \frac{du}{u} \Big)^{1/q}  \lesssim \omega_k(f,t)_{p,q} \qquad \text{if} \quad k\leq  d/p
\end{equation}
(with the standard modifications when $q=\infty$).

 Assume $f \in L_1(\R^d)$. Then 
\begin{equation}\label{ThmDeVore*New}
	t^d f^{**}(t^d) \lesssim \omega_d(f,t)_{1}
\end{equation}
and
\begin{equation}\label{KolyadaNew}
		t^k \int_{t^d}^\infty u^{1-k/d} f^{**}(u) \frac{du}{u} \lesssim  \omega_k(f,t)_1 \qquad \text{if}  \quad k < d.
	\end{equation}

\end{thm}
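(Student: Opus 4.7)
The plan is to reduce all estimates in the $L_{p,q}$ range to a pointwise bound on the Strömberg--Jawerth--Torchinsky maximal function $M^{\#}_{s}f$ and then pass to rearrangements via the equivalence \eqref{EquivMax}. For $x\in Q$ and a near-best polynomial $P_Q$ of degree $<k$ approximating $f$ on $Q$ in the $L_{p,q}$ quasi-norm, Chebyshev's inequality in Lorentz scale yields
$$
\inf\bigl\{\alpha\geq 0 : |\{y\in Q : |f(y)-P_Q(y)|>\alpha\}|_d < s|Q|_d\bigr\} \lesssim s^{-1/p}|Q|_d^{-1/p}\,\|f-P_Q\|_{L_{p,q}(Q)},
$$
which, together with the Lorentz-valued Whitney inequality $\|f-P_Q\|_{L_{p,q}(Q)}\lesssim \omega_k(f,|Q|_d^{1/d})_{p,q}$, gives the pointwise bound
$$
M^{\#}_{s,\R^d} f(x) \lesssim \sup_{x\in Q} |Q|_d^{-1/p}\,\omega_k(f,|Q|_d^{1/d})_{p,q}.
$$

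Next I would transfer this cube-by-cube estimate to a bound on the rearrangement. A covering/selection argument combined with \eqref{EquivMax} produces, at each scale $u$, $u^{1/p}f^{\#*}(u)\lesssim \omega_k(f,u^{1/d})_{p,q}$; integrating against $u^{q/p-1}\,du$ on $(0,t^d)$ and interchanging the orders of integration then yields the left-hand sides of \eqref{ThmDeVore*Lorentz}--\eqref{ThmDeVore*Lorentz2}. The dichotomy on $k$ reflects the Marchaud-type behaviour of $u\mapsto u^{-k}\omega_k(f,u)_{p,q}$: when $k>d/p$, the function $u\mapsto u^{-d/p}\omega_k(f,u)_{p,q}$ need not decrease, forcing the tail supremum on the right of \eqref{ThmDeVore*Lorentz} together with the prefactor $t^{-d/p}$; when $k\leq d/p$, this function is essentially non-increasing in $u$, so the integral collapses to the single modulus $\omega_k(f,t)_{p,q}$ of \eqref{ThmDeVore*Lorentz2}.

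For the $L_1$-estimates I would follow the Kolyada--Lerner scheme underlying \eqref{KL} and \eqref{KL--}, writing $f = P + (f-P)$ on a cube of side $\asymp t$ with $P$ a near-best polynomial of degree $<k$. Whitney gives $\|f-P\|_{L_1}\lesssim \omega_k(f,t)_1$, while the polynomial part is controlled through Markov-type inequalities, yielding \eqref{ThmDeVore*New} at the single scale $t$. The refined estimate \eqref{KolyadaNew} then follows by iterating the decomposition over dyadic scales $2^j t$ and summing terms of the type $2^{j(d-k)}\omega_k(f,2^j t)_1$; the hypothesis $k<d$ ensures absolute convergence and, upon rewriting the sum as an integral, produces the weight $u^{1-k/d}\,du/u$. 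The main obstacle is the Lorentz-valued Whitney inequality and the selection of cubes in the rearrangement step: one must check that the coverings realising the supremum defining $M^{\#}_s f$ can be chosen coherently across the scales $u\in(0,t^d)$ so as to respect the Lorentz weight, which is precisely where the strengthening over DeVore's \eqref{DeVMax} and Kolyada's \eqref{KL--} arises.
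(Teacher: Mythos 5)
Your plan contains a genuine gap at the very step that carries all the weight. The Chebyshev/Whitney argument in Lorentz scale gives the pointwise bound
\begin{equation*}
M^{\#}_{s}f(x)\ \lesssim\ \sup_{x\in Q}\ |Q|_d^{-1/p}\,\omega_k\big(f,|Q|_d^{1/d}\big)_{p,q},
\end{equation*}
but the supremum on the right runs over \emph{all} cubes $Q$ containing $x$ and therefore does not depend on $x$ at all: it equals $\sup_{r>0}r^{-d/p}\omega_k(f,r)_{p,q}$. What this yields is only the $L_\infty$-bound, i.e.\ DeVore's original inequality \eqref{DeVMax}. The subsequent claim that ``a covering/selection argument'' upgrades this to the \emph{scale-dependent} pointwise estimate $u^{1/p}f^{\#*}(u)\lesssim\omega_k(f,u^{1/d})_{p,q}$ is not merely an obstacle to overcome; this estimate is \emph{false} for $k>d/p$. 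The paper shows exactly this in Remark~\ref{RemarkCubes}(vi): if $u^{1/p}f^{\#*}(u)\lesssim\omega_1(f,u^{1/d})_p$ held (take $q=p$, $k=1>d/p$), integrating over $(0,t^d)$ would give $\big(\int_0^{t^d}(f^{\#*}(u))^p\,du\big)^{1/p}\lesssim\omega_1(f,t)_p$, and dividing by $t^{d/p}$ and letting $t\to0$ forces $\|f\|_{\mathrm{BMO}}\lesssim\lim_{t\to0}\omega_1(f,t)_p/t^{d/p}=0$ for every $f\in C^\infty_0(\R^d)$, a contradiction. The whole content of \eqref{ThmDeVore*Lorentz}, compared with \eqref{ThmDeVore*Lorentz2}, is that when $k>d/p$ one genuinely needs the tail supremum $\sup_{t<u<\infty}u^{-d/p}\omega_k(f,u)_{p,q}$ and the $t^{-d/p}$ prefactor, and a pointwise bound of the kind you posit would collapse that structure.

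This is why the paper does not argue cube-by-cube at all. Its route is purely interpolation-theoretic: first, Theorem~\ref{LemmaEmbBMOLorentzState} supplies the embedding $\dot{B}^{d/p}_\infty L_{p,q}(\R^d)\hookrightarrow\mathrm{BMO}(\R^d)$, which is applied at the level of $K$-functionals,
\begin{equation*}
K\big(t,f;L_{p,q},\mathrm{BMO}\big)\ \lesssim\ K\big(t,f;L_{p,q},\dot{B}^{d/p}_\infty L_{p,q}\big),
\end{equation*}
and then both sides are computed explicitly: the left via the Jawerth--Torchinsky characterization \eqref{JT} (which produces exactly $\big(\int_0^{t^p}(u^{1/p}f^{\#*}(u))^q\,du/u\big)^{1/q}$), and the right via the realization $K(t^k,f;L_{p,q},\dot W^kL_{p,q})\asymp\omega_k(f,t)_{p,q}$ together with Holmstedt's reiteration formula, which is precisely what manufactures the supremum over $(t,\infty)$. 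The dichotomy $k>d/p$ versus $k\le d/p$ is then a consequence of which embedding ($\dot B^{d/p}_\infty L_{p,q}\hookrightarrow\mathrm{BMO}$, respectively $\dot W^kL_{p,q}\hookrightarrow L_{p^*,q}$) is in force, not of any monotonicity of $u\mapsto u^{-d/p}\omega_k(f,u)$. For the $L_1$ cases your Kolyada--Lerner sketch is plausible in spirit, but the paper again proceeds through $K$-functionals, using $\dot W^d_1\hookrightarrow L_\infty$ and $K(t,f;L_1,L_\infty)=tf^{**}(t)$ for \eqref{ThmDeVore*New}, and for \eqref{KolyadaNew} it simply cites the known result. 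To salvage your approach for the $k>d/p$ case you would have to abandon the scale-by-scale pointwise estimate and find a different mechanism, and in practice that mechanism is the $K$-functional comparison the paper uses.
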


Note that by \eqref{EquivMax},  $f^{\#}$ can be replaced by
$M^{\#}_{s} f$ for small enough $s>0$
in inequalities \eqref{ThmDeVore*Lorentz} and \eqref{ThmDeVore*Lorentz2}. On the other hand, the corresponding inequalities to \eqref{ThmDeVore*New} and \eqref{KolyadaNew} obtained by replacing $f^{**}$ by $f^{\#*}$ also holds true (cf. \eqref{ProofLem2.1} below). In particular, it is plain to see that  \eqref{KolyadaNew} implies
\begin{equation*}
	  \int_0^{t^d} f^{\# *}(u) \, du   \lesssim \omega_k(f,t)_{1} \qquad \text{if} \quad k <  d
\end{equation*}
for $f \in L_1(\R^d)$, see \eqref{ThmDeVore*Lorentz2}.

Taking $p=q$ in  Theorem \ref{ThmDeVoreLorentz} permits us   to significantly extend DeVore's inequality \eqref{DeVMax}.

\begin{cor}\label{CorollaryThmDeVore}
	Let $k \in \N$ and $t > 0$. Assume $f \in L_p(\R^d)$ with $1 < p < \infty$. Then
	\begin{equation}\label{ThmDeVore*}
	t^{-d/p}  \Big(\int_0^{t^d} (f^{\# *}(u))^p \, du \Big)^{1/p} \lesssim \sup_{ t < u < \infty} u^{-d/p} \omega_k(f,u)_{p} \qquad \text{if} \quad   k > d/p,
\end{equation}
and
\begin{equation}\label{ThmDeVore*newextreme}
	  \Big(\int_0^{t^d} (f^{\# *}(u))^p \, du \Big)^{1/p}  \lesssim \omega_k(f,t)_{p} \qquad \text{if} \quad k \leq d/p.
\end{equation}


\end{cor}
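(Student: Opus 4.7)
The plan is to deduce Corollary \ref{CorollaryThmDeVore} from Theorem \ref{ThmDeVoreLorentz} by specializing the parameter $q$ to equal $p$; no additional work is required beyond identifying both sides with their classical counterparts.

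First I would observe that for $1 < p < \infty$ the Lorentz space $L_{p,p}(\R^d)$ coincides with $L_p(\R^d)$ with equivalent quasi-norms, and consequently the Lorentz modulus of smoothness $\omega_k(f,u)_{p,p}$ is equivalent to the standard $L_p$-modulus $\omega_k(f,u)_p$. Thus the hypothesis $f \in L_{p,q}(\R^d)$ in Theorem \ref{ThmDeVoreLorentz} reduces to $f \in L_p(\R^d)$, and the right-hand sides of \eqref{ThmDeVore*Lorentz} and \eqref{ThmDeVore*Lorentz2} take precisely the forms appearing in \eqref{ThmDeVore*} and \eqref{ThmDeVore*newextreme}.

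Next I would simplify the left-hand sides. With $q = p$, the weight $u^{q/p} = u$ cancels against the factor $u^{-1}$ coming from the Haar measure $du/u$, yielding
\begin{equation*}
\Big(\int_0^{t^d} (u^{1/p} f^{\# *}(u))^p \, \frac{du}{u}\Big)^{1/p} = \Big(\int_0^{t^d} (f^{\# *}(u))^p \, du\Big)^{1/p}.
\end{equation*}
Substituting this identity into \eqref{ThmDeVore*Lorentz} and \eqref{ThmDeVore*Lorentz2} produces exactly \eqref{ThmDeVore*} and \eqref{ThmDeVore*newextreme}, and the argument concludes.

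There is no genuine obstacle in this derivation: the corollary is a direct specialization of the theorem. The only minor point meriting verification is the identification of $\omega_k(\cdot,u)_{p,p}$ with $\omega_k(\cdot,u)_{p}$, which is immediate from setting $p = q$ and $b = 0$ in the Lorentz--Zygmund definition of Subsection \ref{SectionFunctionSpaces}.
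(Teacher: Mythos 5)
Your proof is correct and is exactly the paper's (implicit) argument: the corollary is obtained by setting $q=p$ in Theorem \ref{ThmDeVoreLorentz}, and the algebraic simplification $(u^{1/p}f^{\#*}(u))^p\,\frac{du}{u} = (f^{\#*}(u))^p\,du$ together with $L_{p,p}=L_p$ and $\omega_k(\cdot,u)_{p,p}=\omega_k(\cdot,u)_p$ is all that is needed.
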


Let us now discuss the optimality of Theorem \ref{ThmDeVoreLorentz} and its extension to cubes.
\begin{rem}\label{RemarkCubes}
(i) Assume $p > 1$.
It is clear that \eqref{ThmDeVore*} strengthens   \eqref{DeVMax}. Below (cf. Proposition \ref{ThmSharpnessAssertion}) we will construct the family of extremal functions for which  \eqref{ThmDeVore*Lorentz} (in particular, \eqref{ThmDeVore*}) becomes equivalence but 
  \eqref{DeVMax}
  is not applicable
 since
		\begin{equation*}
		\frac{f^{\# *}(t)}{\sup_{t < u < \infty} u^{-1/p} \omega_k(f,u)_{p}} \to 0 \quad \text{as} \quad t \to 0.
	\end{equation*}
	\\	
(ii)
Observe that 
 inequality \eqref{ThmDeVore*Lorentz2} for  $k < d/p$ follows from the sharper  estimate
	$$
	\Big(\int_0^{t^d} (u^{\frac{1}{p}} f^*(u))^q \frac{du}{u} \Big)^{1/q} + t^k \Big(\int_{t^d}^\infty (u^{\frac{1}{p} - \frac{k}{d}} f^*(u))^q \frac{du}{u} \Big)^{1/q} \lesssim \omega_k(f,t)_{p,q}.
	$$
	\\	
(iii) Inequalities \eqref{ThmDeVore*Lorentz} and \eqref{ThmDeVore*} do not depend on $k > d/p$ since 
\begin{equation*}
	\sup_{ t < u <\infty} u^{-d/p} \omega_k(f,u)_{p,q} \asymp \sup_{ t < u <\infty} u^{-d/p} \omega_l(f,u)_{p,q}, \qquad k, l > d/p.
\end{equation*}
This is an immediate consequence of the Marchaud inequalities for moduli of smoothness (see \cite{GogatishviliOpicTikhonovTrebels})
	\begin{equation}\label{KL---}
		\omega_l(f,t)_{p,q} \lesssim \omega_k (f,t)_{p,q} \lesssim t^k \int_t^\infty \frac{\omega_l(f,u)_{p,q}}{u^k} \frac{du}{u}, \qquad k < l.
	\end{equation}	
	\\
(iv)  Theorem \ref{ThmDeVoreLorentz} shows the important role played by the relationships between $k, d$ and $p$ in maximal inequalities. 
Comparing the cases $k > d/p$ and $k \leq d/p$, 
 the question is raised whether
 \eqref{ThmDeVore*Lorentz} can be sharpened by removing the  $\sup_{t < u < \infty}$. It will be shown below that this is not the case.
	 \\
(v) Assume $p=1$. Inequality \eqref{ThmDeVore*New}  sharpens  \eqref{KL--} in the sense of the order of moduli of smoothness, cf. \eqref{KL---}. 
  Furthermore, in Proposition \ref{SharpnessThmDeVore*New} below, we will show that \eqref{ThmDeVore*New} is sharp, that is, there exist functions for which the converse inequality is valid. On the other hand, the inequality \eqref{KolyadaNew} and its optimality are known (see \cite[Corollary 6]{Kolyada} and \cite[Theorem 4.3]{DominguezTikhonovB}), but it is incorporated to Theorem  \ref{ThmDeVoreLorentz} for the sake of completeness.
	\\
	(vi) Another interesting approach to estimates for maximal functions in terms of smoothness was proposed by Kolyada \cite{Kolyada87}. Among other results, he showed (cf. \cite[Theorem 1]{Kolyada87}) that if $f \in L^p(\R^d), \, 1 < p < \infty$, then
	\begin{equation}\label{Kolyada}
		\sum_{n \geq j : f^{\#*}(2^{-n d}) > f^{\#*}(2^{-(n+1) d})} 2^{-n d} (f^{\#*}(2^{-n d}))^p \lesssim \omega_1(f,2^{-j})_p^p, \quad j \in \mathbb{N}.
	\end{equation}
	Furthermore, if $1 < p < d$ then this estimate can be improved:
	 \begin{equation}\label{Kolyada2}
		\sum_{n \geq j} 2^{-n d} (f^{\#*}(2^{-n d}))^p \lesssim \omega_1(f,2^{-j})_p^p, \quad j \in \mathbb{N}.
	\end{equation}

To compare \eqref{Kolyada} with Corollary \ref{CorollaryThmDeVore}, it is convenient to rewrite \eqref{ThmDeVore*} with $k=1$ as follows:
	\begin{equation}\label{6656565}
		\sum_{n \geq j} 2^{-n d} (f^{\#*}(2^{-n d}))^p \lesssim 2^{-j d} \sup_{2^{-j} < u < \infty} u^{-d} \omega_1(f,u)_p^p, \qquad j \in \N,  \qquad d < p.
	\end{equation}
	On the one hand, the left-hand side of \eqref{6656565} sharpens the corresponding one in \eqref{Kolyada}. On the other hand, the $\sup_{2^{-j} < u < \infty}$ in the right-hand side of \eqref{6656565} is attained at $u=2^{-j}$ in \eqref{Kolyada}. This raises the natural question whether  the inequality \eqref{6656565} (or equivalently, \eqref{ThmDeVore*}) can be improved by
	 \begin{equation}\label{66565651}
		\sum_{n \geq j} 2^{-n d} (f^{\#*}(2^{-n d}))^p \lesssim \omega_1(f,2^{-j})_p^p, \qquad j \in \N,  \qquad d < p.
	\end{equation}
	However, this is not true. Indeed, assume that \eqref{66565651} holds, or equivalently,
$
		 \bigg(\int_0^{t^d} (f^{\# *}(u))^p \, du \bigg)^{1/p}  \lesssim \omega_1(f,t)_{p},$ $t \in (0,1).$ 
	Since  $\lim_{t \to 0+} \frac{\int_0^{t} (f^{\# *}(u))^p \, du }{t}  = (f^{\#*}(0))^p = \|f\|^p_{\text{BMO}(\R^d)}$, 
 then 
	\begin{equation*}
		 \|f\|^p_{\text{BMO}(\R^d)} \lesssim \lim_{t \to 0+} \frac{ \omega_1(f,t)_{p}}{t^{d/p}}, \quad d < p,
	\end{equation*}
	which is not true even for $f \in C^\infty_0(\R^d)$ (recall that $\omega_1(f,t)_p \asymp t$ for $t$ sufficiently small).
	
	Assume $d \geq 2$ and $p \in (1,d)$. Then inequalities \eqref{ThmDeVore*newextreme} with $k=1$ and \eqref{Kolyada2} coincide. Furthermore, \eqref{ThmDeVore*newextreme} shows that the limiting case of \eqref{Kolyada2} with $p=d \geq 2$ also holds. This was left open in \cite{Kolyada87}.
\\
  (vii) The counterpart of Theorem \ref{ThmDeVoreLorentz} for cubes reads as follows. Let  $k \in \N$ and $t \in (0,1)$. Assume $f \in L_{p,q}(Q_0)$ with $1 < p < \infty$ and $0 <  q \leq \infty$. Then
	\begin{equation}\label{ThmDeVore*LorentzCubes}
	t^{-d/p}  \Big(\int_0^{t^d} (u^{1/p} f_{Q_0}^{\# *}(u))^q \frac{du}{u} \Big)^{1/q} \lesssim  \|f\|_{L_{p,q}(Q_0)} + \sup_{ t < u < 1} u^{-d/p} \omega_k(f,u)_{p,q}, \qquad \text{if} \quad k > d/p,
\end{equation}
and
\begin{equation}\label{ThmDeVore*Lorentz2Cubes}
	  \Big(\int_0^{t^d} (u^{1/p} f_{Q_0}^{\# *}(u))^q \frac{du}{u} \Big)^{1/q}  \lesssim  t^k \|f\|_{L_{p,q}(Q_0)}+ \omega_k(f,t)_{p,q}, \qquad \text{if} \quad k\leq  d/p
\end{equation}
(with the standard modifications when $q=\infty$).

 Assume $f \in L_1(Q_0)$ and $t \in (0,1)$. Then 
\begin{equation}\label{ThmDeVore*NewCubes}
	t^d f^{**}(t^d) \lesssim t^d \|f\|_{L_1(Q_0)} + \omega_d(f,t)_{1}
\end{equation}
and
\begin{equation}\label{KolyadaNew**}
		t^k \int_{t^d}^1 u^{1-k/d} f^{**}(u) \frac{du}{u} \lesssim t^k \|f\|_{L_1(Q_0)} +   \omega_k(f,t)_1, \qquad \text{if}  \quad k < d.
	\end{equation}
Note that the inequalities \eqref{ThmDeVore*LorentzCubes}--\eqref{KolyadaNew**} are no longer true when we omit the terms involving $\|f\|_{L_{p,q}(Q_0)}$ or $\|f\|_{L_1(Q_0)}$ (take, e.g., polynomials).
  This is in sharp contrast with Theorem \ref{ThmDeVoreLorentz} showing
   an important difference  between inequalities  for functions on cubes and the Euclidean space.
\end{rem}

Working  with the Lorentz  norm in  Theorem \ref{ThmDeVoreLorentz} allows us to improve not only   the classical embedding \eqref{EmbD} but also the Stein--Zygmund embedding  \cite{SteinZygmund} (see also \cite[p. 164]{Stein70})
\begin{equation*}
		\dot{H}^{d/p} L_{p,\infty}(\R^d) \hookrightarrow \text{BMO}(\R^d), \quad 1 < p < \infty,
	\end{equation*}
 by
considering a finer scale of  Lorentz--Besov spaces.
 Similar approach has been recently applied in, e.g.,  \cite{Brezis, Brue, GrafakosSlavikova, MartinMilman, MartinMilman14, SeegerTrebels, Stolyarov}. 

	\begin{thm}\label{LemmaEmbBMOLorentzState}
	Assume $1 < p < \infty$. Then
	\begin{equation}\label{LemmaEmbBMOLorentz}
		\dot{B}^{d/p}_\infty L_{p,\infty}(\R^d) \hookrightarrow \emph{BMO}(\R^d).
	\end{equation}
	Consequently,
	\begin{equation}\label{LemmaEmbBMO*}
		\dot{H}^{d/p} L_{p,\infty}(\R^d) \hookrightarrow \emph{BMO}(\R^d)
	\end{equation}
	and
	\begin{equation}\label{LemmaEmbBMO}
		\dot{W}^k L_{d/k,\infty}(\R^d) \hookrightarrow \emph{BMO}(\R^d), 
\quad k < d.
	\end{equation}
\end{thm}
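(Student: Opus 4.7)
The strategy is to derive the main embedding \eqref{LemmaEmbBMOLorentz} as a direct corollary of Theorem \ref{ThmDeVoreLorentz}, specializing inequality \eqref{ThmDeVore*Lorentz} to $q=\infty$ and any fixed $k \in \N$ with $k > d/p$; the two remaining embeddings \eqref{LemmaEmbBMO*} and \eqref{LemmaEmbBMO} will then follow from standard Fourier-multiplier/Sobolev comparisons identifying $\dot{H}^{d/p}L_{p,\infty}$ and $\dot{W}^kL_{d/k,\infty}$ as subspaces of $\dot{B}^{d/p}_\infty L_{p,\infty}$.

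For \eqref{LemmaEmbBMOLorentz}, I would argue as follows. Fix $k \in \N$ with $k > d/p$ and $t > 0$. Taking $q=\infty$ in \eqref{ThmDeVore*Lorentz} gives
\begin{equation*}
    t^{-d/p} \sup_{0 < u < t^d} u^{1/p} f^{\#*}(u) \lesssim \sup_{t < u < \infty} u^{-d/p} \omega_k(f,u)_{p,\infty} \leq \|f\|_{\dot{B}^{d/p}_\infty L_{p,\infty}(\R^d);k}.
\end{equation*}
Setting $u = v t^d$ on the left-hand side rewrites this as
\begin{equation*}
    \sup_{0 < v < 1} v^{1/p} f^{\#*}(v t^d) \lesssim \|f\|_{\dot{B}^{d/p}_\infty L_{p,\infty}(\R^d);k}, \qquad t > 0,
\end{equation*}
with an implied constant independent of $t$. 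Since $f^{\#*}$ is non-increasing, $f^{\#*}(v t^d) \uparrow f^{\#*}(0+)$ as $t \downarrow 0$ for every fixed $v \in (0,1)$, and $\sup_{0<v<1} v^{1/p} = 1$; hence letting $t \to 0^+$ yields
\begin{equation*}
    \|f\|_{\mathrm{BMO}(\R^d)} \asymp f^{\#*}(0+) \lesssim \|f\|_{\dot{B}^{d/p}_\infty L_{p,\infty}(\R^d);k},
\end{equation*}
which is \eqref{LemmaEmbBMOLorentz}.

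To obtain \eqref{LemmaEmbBMO*}, I would combine \eqref{LemmaEmbBMOLorentz} with the Fourier-multiplier estimate
\begin{equation*}
    \omega_k(f,t)_{p,\infty} \lesssim t^{d/p}\,\|(-\Delta)^{d/(2p)}f\|_{L_{p,\infty}(\R^d)}, \qquad k > d/p,
\end{equation*}
(valid because $\xi \mapsto |\xi|^{-d/p}(e^{ih\cdot\xi}-1)^k$ is a bounded multiplier on $L_{p,\infty}(\R^d)$ uniformly in $|h|\leq t$, and $L_{p,\infty}$ is an interpolation space between Lebesgue spaces). Taking the supremum over $t>0$ shows $\dot{H}^{d/p}L_{p,\infty}(\R^d)\hookrightarrow\dot{B}^{d/p}_\infty L_{p,\infty}(\R^d)$, whence \eqref{LemmaEmbBMO*}. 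Finally, \eqref{LemmaEmbBMO} is the special case $p=d/k$ of \eqref{LemmaEmbBMO*}, using the identification $\dot{H}^kL_{d/k,\infty}(\R^d)=\dot{W}^kL_{d/k,\infty}(\R^d)$ for $k \in \N$ recalled in Subsection~\ref{SectionFunctionSpaces}; note $k<d$ ensures $p=d/k>1$.

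The only delicate point is the passage $t \to 0^+$ in the first step: one must verify that the supremum on the left survives the limit. This is routine because of monotonicity of $f^{\#*}$ (for every fixed $v_0 \in (0,1)$ one has $\sup_{0<v<1} v^{1/p}f^{\#*}(vt^d) \geq v_0^{1/p} f^{\#*}(v_0 t^d) \to v_0^{1/p} f^{\#*}(0+)$, then let $v_0 \to 1^-$), so no genuine obstacle arises; the bulk of the work is already packaged inside Theorem~\ref{ThmDeVoreLorentz}.
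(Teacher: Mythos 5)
Your proposed route is circular within the paper's logical architecture, and this is a genuine gap. You derive \eqref{LemmaEmbBMOLorentz} as a corollary of the inequality \eqref{ThmDeVore*Lorentz} from Theorem~\ref{ThmDeVoreLorentz} (Case $k>d/p$), but the paper's proof of precisely that case of Theorem~\ref{ThmDeVoreLorentz} \emph{begins} by invoking Theorem~\ref{LemmaEmbBMOLorentzState}: it uses the embedding $\dot{B}^{d/p}_\infty L_{p,q}(\R^d)\hookrightarrow\mathrm{BMO}(\R^d)$ to pass from the $K$-functional with $\mathrm{BMO}$ to the $K$-functional with the Besov space, and then applies Holmstedt's formula. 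So Theorem~\ref{LemmaEmbBMOLorentzState} is a prerequisite for \eqref{ThmDeVore*Lorentz}, not a consequence of it. Your passage to the limit $t\to0^+$ and the deduction of \eqref{LemmaEmbBMO*}, \eqref{LemmaEmbBMO} from \eqref{LemmaEmbBMOLorentz} are fine as local steps, but the argument cannot close the loop.

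The paper's own proof of \eqref{LemmaEmbBMOLorentz} takes an entirely different route: it uses duality. It starts from the Seeger--Trebels embedding $H_1(\R^d)\hookrightarrow\dot{B}^{-d/p}_1 L_{p',1}(\R^d)$, together with $(H_1(\R^d))'=\mathrm{BMO}(\R^d)$ and the (outlined) duality $(\dot{B}^{-d/p}_1 L_{p',1}(\R^d))'=\dot{B}^{d/p}_\infty L_{p,\infty}(\R^d)$, then transposes. No pointwise rearrangement inequality or $K$-functional computation is involved at this stage. To make your argument non-circular you would need an independent proof of the $K$-functional estimate $K(t,f;L_{p,\infty},\mathrm{BMO})\lesssim K(t,f;L_{p,\infty},\dot{B}^{d/p}_\infty L_{p,\infty})$ that does not already presuppose $\dot{B}^{d/p}_\infty L_{p,\infty}\hookrightarrow\mathrm{BMO}$ — and that is exactly the content of what you are trying to prove.
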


\begin{rem}
(i) The Stein--Zygmund embedding \eqref{LemmaEmbBMO*} follows from \eqref{LemmaEmbBMOLorentz} since $\dot{H}^{d/p} L_{p,\infty}(\R^d) \hookrightarrow \dot{B}^{d/p}_\infty L_{p,\infty}(\R^d)$ (cf. \cite[Theorem 1.2]{SeegerTrebels}).

(ii) Since
\begin{equation*}
\dot{B}^{d/p_0}_\infty L_{p_0,\infty}(\R^d) \hookrightarrow \dot{B}^{d/p_1}_\infty L_{p_1,\infty}(\R^d), \quad 1 < p_0 < p_1 < \infty,
\end{equation*}
 (cf. \cite[Theorem 1.5]{SeegerTrebels}), the domain space in \eqref{LemmaEmbBMOLorentz} becomes larger with $p \to \infty$. A similar comment  also applies to \eqref{LemmaEmbBMO*} and \eqref{LemmaEmbBMO} (cf. \cite[Theorem 1.6]{SeegerTrebels}). The limiting case $p=\infty$ in \eqref{LemmaEmbBMOLorentz} requires special care because the chosen definition of Besov spaces (with smoothness zero) makes important distinctions. Namely, the space $\dot{B}^{0}_\infty L_{\infty,\infty}(\R^d) = \dot{B}^0_{\infty,\infty}(\R^d)$ given in terms of the moduli of smoothness coincides with $C^\infty_0(\R^d)$ equipped with the $L_\infty(\R^d)$ norm. In this case, we obviously have $\dot{B}^{0}_{\infty,\infty}(\R^d) \hookrightarrow \text{BMO}(\R^d).$ Moreover, the converse embedding holds true when the Besov space $\dot{B}^{0}_{\infty,\infty}(\R^d)$ is replaced by its Fourier-analytically defined counterpart (cf. \cite[p. 24]{Triebel20}).
 \end{rem}

 Embedding \eqref{LemmaEmbBMO} sharpens
\begin{equation}\label{MP}
	\dot{W}^k L_{d/k,\infty}(\R^d) \hookrightarrow L(\infty,\infty)(\R^d), \quad k < d,
\end{equation}
(cf. \cite[Theorem 1.2]{Milman04}  and \cite{Milman16}), where $L(\infty,\infty)(\R^d)$ is the so-called weak-$L_\infty$ space. Note that $L(\infty,\infty)(\R^d)$ is the r.i. hull of $\text{BMO}(\R^d)$ (cf. \cite{BennettDeVoreSharpley}). The quantitative version of \eqref{MP} (with $k=1$) is given by the oscillation inequality
\begin{equation}\label{KolyadaGrad}
	f^{**}(t)-f^*(t) \lesssim t^{1/d} |\nabla f|^{**}(t), \quad f \in C^\infty_0(\R^d).
\end{equation}
This inequality plays a prominent role in the theory of Sobolev embeddings and isoperimetry, as can be seen in \cite{Bastero}, \cite{Kolyada89}, \cite{MartinMilman10} and the references quoted therein.  Another goal of this paper is to complement \eqref{KolyadaGrad}, as well as Theorem \ref{ThmDeVoreLorentz}, with the quantitative counterpart of \eqref{LemmaEmbBMO} in terms of $f^{\#}$. More precisely, we show the following.

\begin{thm}\label{ThmDeVoreDer}
	Let $1 < p < \infty,  k \in \N$, and $r= \frac{d p}{d + k p}$. Assume that either  of the following conditions is satisfied:
	\begin{enumerate}[\upshape(i)]
	\item $k < d (1-1/p)$ and $1 \leq q \leq \infty,$
	\item $k= d(1-1/p)$ and $q=1$.
	 \end{enumerate}
	  Then, given any $f \in \dot{W}^k L_{r,q}(\R^d) + \dot{W}^k L_{d/k, \infty}(\R^d)$ and $t > 0$, we have
		\begin{align}
	t^{-1/p} \Big(\int_0^{t} (u^{1/p}f^{\# *}(u))^q \, \frac{du}{u} \Big)^{1/q} & \lesssim t^{-1/p} \Big(\int_0^{t} (u^{1/r} |\nabla^k f|^*(u))^q \, \frac{du}{u} \Big)^{1/q} \nonumber \\
	& \hspace{1cm}+ \sup_{t < u < \infty} u^{k/d} |\nabla^k f|^*(u) \label{ThmDeVoreDer<}
	\end{align}
	(with the usual modification  if $q=\infty$).	The corresponding inequality for cubes reads as
		\begin{align}
	t^{-1/p} \Big(\int_0^{t} (u^{1/p}f^{\# *}_{Q_0}(u))^q \, \frac{du}{u} \Big)^{1/q} & \lesssim \sum_{l=0}^k \Big[ t^{-1/p} \Big(\int_0^{t} (u^{1/r} |\nabla^l f|^*(u))^q \, \frac{du}{u} \Big)^{1/q} \nonumber  \\
	& \hspace{1cm}+ \sup_{t < u < 1} u^{k/d} |\nabla^l f|^*(u)  \Big]  \label{ThmDeVoreDer<Cubes}
	\end{align}
	for $f \in W^k L_{r,q}(Q_0)$ and $t \in (0,1)$.
\end{thm}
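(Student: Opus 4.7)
The plan is to identify the right-hand side of \eqref{ThmDeVoreDer<} with a Peetre $K$-functional. Since $\frac{1}{r} - \frac{k}{d} = \frac{1}{p}$, Holmstedt's formula for the Lorentz pair $(L_{r,q}(\R^d), L_{d/k,\infty}(\R^d))$ yields
\begin{equation*}
K(t^{1/p}, |\nabla^k f|; L_{r,q}, L_{d/k,\infty}) \asymp \Big(\int_0^t (u^{1/r}|\nabla^k f|^*(u))^q \frac{du}{u}\Big)^{1/q} + t^{1/p}\sup_{u>t} u^{k/d}|\nabla^k f|^*(u),
\end{equation*}
so \eqref{ThmDeVoreDer<} reduces (after cancelling a factor $t^{-1/p}$ on both sides) to the $K$-functional estimate
\begin{equation*}
\Big(\int_0^t (u^{1/p} f^{\# *}(u))^q \frac{du}{u}\Big)^{1/q} \lesssim K(t^{1/p}, |\nabla^k f|; L_{r,q}, L_{d/k,\infty}).
\end{equation*}

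My plan is to prove this via a decomposition argument. I would pick any admissible splitting $|\nabla^k f| = g_0 + g_1$ and lift it to $f = f_0 + f_1$ through the Riesz-potential identity $f = I_k (-\Delta)^{k/2} f$: split $h := (-\Delta)^{k/2} f$ into pieces $h_0, h_1$ matching $g_0, g_1$ in the respective Lorentz norms, and set $f_i := I_k h_i$. This substitution is legitimate because the $K$-functionals of $h$ and $|\nabla^k f|$ in $(L_{r,q}, L_{d/k,\infty})$ are comparable, thanks to the boundedness of the Riesz transforms on $L_{r,q}$ (valid for $r > 1$) and on $L_{d/k,\infty}$ (valid since $k < d$). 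Sublinearity of the sharp maximal then gives $f^{\# *}(u) \lesssim f_0^{\# *}(u/2) + f_1^{\# *}(u/2)$, so I handle each piece separately.

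For $f_0$: combining the Hardy--Littlewood--Sobolev inequality in Lorentz form $\|I_k h_0\|_{L_{p,q}(\R^d)} \lesssim \|h_0\|_{L_{r,q}(\R^d)}$ with the $L_{p,q}$-boundedness of the sharp maximal operator (which follows from $f^{\#} \leq 2 M f$ and the boundedness of the Hardy--Littlewood maximal on $L_{p,q}$ for $p > 1$) gives $\|f_0^{\#}\|_{L_{p,q}} \lesssim \|h_0\|_{L_{r,q}}$. For $f_1$: the embedding \eqref{LemmaEmbBMO} from Theorem \ref{LemmaEmbBMOLorentzState}, applied to $f_1 \in \dot{W}^k L_{d/k,\infty}(\R^d)$, yields $\|f_1\|_{\text{BMO}} \lesssim \|h_1\|_{L_{d/k,\infty}}$, whence $f_1^{\# *}(u) \leq \|f_1^{\#}\|_\infty = \|f_1\|_{\text{BMO}}$. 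Restricting the integration to $(0, t)$,
\begin{equation*}
\Big(\int_0^t (u^{1/p} f^{\# *}(u))^q \frac{du}{u}\Big)^{1/q} \lesssim \|f_0^{\#}\|_{L_{p,q}} + t^{1/p}\|f_1\|_{\text{BMO}} \lesssim \|h_0\|_{L_{r,q}} + t^{1/p}\|h_1\|_{L_{d/k,\infty}},
\end{equation*}
and taking the infimum over admissible decompositions produces the $K$-functional above.

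For the cube inequality \eqref{ThmDeVoreDer<Cubes}, I would reduce to $\R^d$ via a Stein-type extension operator $E : W^k L_{r,q}(Q_0) \to W^k L_{r,q}(\R^d)$ (and analogously on the $W^k L_{d/k,\infty}$ scale), whose operator norm is controlled by $\sum_{l=0}^k \|\nabla^l f\|$; this is what produces the extra intermediate-derivative terms on the right-hand side of \eqref{ThmDeVoreDer<Cubes}. The main technical obstacle is the borderline case (ii), where $r = q = 1$ and Riesz transforms fail to be strong-type on $L_1$, so the clean Riesz-potential lifting breaks down; I would circumvent this by constructing $f = f_0 + f_1$ directly at the level of $f$ through a Calder\'on--Zygmund-type truncation on $|\nabla^k f|$, combined with the sharp Gagliardo--Nirenberg--Sobolev embedding $\dot{W}^k L_1(\R^d) \hookrightarrow L_{p,1}(\R^d)$ in place of the fractional-Laplacian route.
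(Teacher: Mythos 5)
Your strategy matches the paper's at the conceptual core: both proofs reduce \eqref{ThmDeVoreDer<} to the $K$-functional inequality $K(t,f;L_{p,q},\mathrm{BMO}) \lesssim K(t,f;\dot W^k L_{r,q},\dot W^k L_{d/k,\infty})$, both derive this from the pair of embeddings $\dot W^k L_{r,q}\hookrightarrow L_{p,q}$ (Sobolev) and $\dot W^k L_{d/k,\infty}\hookrightarrow\mathrm{BMO}$ (Theorem \ref{LemmaEmbBMOLorentzState}), and both identify the right-hand side with the Holmstedt expression. The genuine difference is in how you pass from a decomposition of $|\nabla^k f|$ to a decomposition of $f$. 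The paper invokes, as a black box, the DeVore--Scherer characterization $K(t,f;\dot W^k L_{r,q},\dot W^k L_{d/k,\infty})\asymp K(t,|\nabla^k f|;L_{r,q},L_{d/k,\infty})$, which holds uniformly for both cases (i) and (ii) and requires no distinction between $r>1$ and $r=1$; it also cites the Jawerth--Torchinsky identity \eqref{JT} directly instead of re-deriving $\left(\int_0^{t}(u^{1/p}f^{\#*}(u))^q\,du/u\right)^{1/q}\lesssim K(t^{1/p},f;L_{p,q},\mathrm{BMO})$ by hand. Your Riesz-potential lifting $f_i=I_k h_i$ is a legitimate way to prove the DeVore--Scherer ingredient, and for case (i) the chain (HLS $\Rightarrow$ $\|f_0\|_{L_{p,q}}\lesssim\|h_0\|_{L_{r,q}}$; Riesz-transform boundedness on $L_{d/k,\infty}$ plus \eqref{LemmaEmbBMO} $\Rightarrow$ $\|f_1\|_{\mathrm{BMO}}\lesssim\|h_1\|_{L_{d/k,\infty}}$; sublinearity of $f^{\#}$) is sound. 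What this buys you is a self-contained argument; what it costs is exactly the case split you identify, where $r=1$ forces you off the Riesz-potential route. That patch — a direct Calder\'on--Zygmund truncation of $f$ combined with $\dot W^k L_1\hookrightarrow L_{p,1}$ — is the standard way to handle it but is left as a sketch, and in practice it amounts to re-proving the DeVore--Scherer result in the $L_1$ endpoint, which is nontrivial (truncating to control all $k$-th order derivatives simultaneously requires some care). The cube argument is also closer to outline than proof: the paper proceeds not via a crude extension bound $\|E f\|\lesssim\sum_l\|\nabla^l f\|$ but by using the inhomogeneous $K$-functional $K(t^k,f;L_{p,q}(Q_0),W^k L_{p,q}(Q_0))\asymp t^k\|f\|_{L_{p,q}}+\omega_k(f,t)_{p,q}$ together with the boundedness of the Stein extension at the level of $K$-functionals, which is what cleanly generates the $\sum_{l=0}^k$ structure; your phrasing would need to be sharpened to actually produce \eqref{ThmDeVoreDer<Cubes}.
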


\begin{rem}
(i) The two terms given in the right-hand side of \eqref{ThmDeVoreDer<} are independent of each other. More precisely, let
	\begin{equation*}
		I(t) = t^{-1/p} \Big(\int_0^{t} (u^{1/r} g^*(u))^q \, \frac{du}{u} \Big)^{1/q} \quad \text{and} \quad J(t) =  \sup_{t < u < \infty} u^{k/d} g^*(u).
	\end{equation*}
	If $g(x) = g_0(|x|)$ with $g_0(t) = t^{-d/r} (1 + |\log t|)^{-\varepsilon}, \, \varepsilon > 1/q$, then $J(t) \asymp t^{-1/p} (1 + |\log t|)^{-\varepsilon}$ and $I(t) \asymp t^{-1/p} (1 + |\log t|)^{-\varepsilon + 1/q}$. On the other hand, setting $g_0(t) = \chi_{(0,1)}(t)$ we have $I(t) \asymp t^{k/d}$ and $J(t) \asymp C$ for $t$ small enough.
	\\
	(ii) 
Inequality \eqref{ThmDeVoreDer<} is sharp in the sense that there exists a family of functions for which \eqref{ThmDeVoreDer<} becomes an equivalence (see Proposition \ref{ThmDeVoreDerSharpnessAssertion} below for the precise statement.)
 \\
 (iii) Inequalities \eqref{KolyadaGrad} and \eqref{ThmDeVoreDer<} with $k=1$ are not comparable in general. On the one hand, comparing  their left-hand sides, we have
 \begin{equation*}
 	f^{**}(t)-f^*(t) \lesssim t^{-1/p} \Big(\int_0^{t} (u^{1/p}f^{\# *}(u))^q \, \frac{du}{u} \Big)^{1/q}.
 \end{equation*}
 This is an immediate consequence of the Bennett--DeVore--Sharpley inequality (cf. \eqref{BDSOsc} below) and monotonicity properties. On the other hand, concerning right-hand sides,  H\"older inequality yields 
 \begin{equation*}
 	t^{1/d} |\nabla f|^{**}(t) \lesssim t^{-1/p} \Big(\int_0^{t} (u^{1/r} |\nabla f|^*(u))^q \, \frac{du}{u} \Big)^{1/q}.
 \end{equation*}
 Furthermore, it is plain to construct counterexamples showing that the converse estimate fails to be true.
\\
 (iv) Note that \eqref{ThmDeVoreDer<Cubes} is not valid if we replace the right-hand side by the corresponding expression  involving only the last summand with $l=k$.
	\end{rem}

\subsection{Rearrangement inequalities in terms of sharp maximal functions}

To avoid unnecessary technicalities, the results of this section are stated for integrable functions on $Q_0$ but similar local results can be obtained for integrable functions on $\R^d$.
It is well known that
\begin{equation}\label{ProofLem2.1}
(f^{\#}_{Q_0;\mu})^{*}_{\mu}(t) \lesssim f^{**}_\mu(t)
\end{equation}
for $f \in L^1(Q_0, \mu)$, see, e.g.,  \cite{Herz} and  \cite[Theorem 3.8, p. 122]{BennettSharpley}.


In many problems of harmonic analysis the converse estimates to \eqref{ProofLem2.1}
 are of great importance. 
The celebrated Bennett--DeVore--Sharpley inequality \cite[(3.3), p. 605]{BennettDeVoreSharpley} (comp\-lemented in \cite[(3.6), p. 227]{ShagerShvartsman}) partially provides such estimates: if $f \in L_1(Q_0,\mu)$ and $0 < t< \frac{\mu(Q_0)}{6}$ then
\begin{equation}\label{BDSOsc}
	f^{**}_\mu(t) - f^{*}_\mu(t) \lesssim (f^{\#}_{Q_0;\mu})^*_{\mu}(t).
\end{equation}
 In particular, this inequality yields  that $\text{BMO}(Q_0,\mu)$ is contained in the weak $L_\infty(Q_0,\mu)$ space. Furthermore, it can be applied to derive such fundamental results as the John--Nirenberg inequality \cite{JohnNirenberg} and the Fefferman--Stein theorem \cite{FeffermanStein} on the equivalence between the $L_p$-norms of $f$ and $f^{\#}$, as well as various applications in interpolation theory, cf. \cite{BennettSharpley} and \cite{ShagerShvartsman}. As a consequence,  inequality \eqref{BDSOsc} has received a lot of attention over the last few years. In this regard, we mention the work \cite{Lerner05} by Lerner, where he obtained an improvement of \eqref{BDSOsc} for non-doubling measures with the help of centered maximal functions, as well as weighted variants of \eqref{BDSOsc} involving $f^{\#}$ rather than $f^{\#}_{\mu}$. Related inequalities for  the maximal function $M^{\#}_{s,Q_0}$ may be found in \cite{Cwikel, JawerthTorchinsky, Lerner98a}.

In this paper we obtain a logarithmic version of the Bennett--DeVore--Sharpley inequality but, unlike \eqref{BDSOsc}, involving only $f^{*}_\mu$ in the lower bound. Here, we stress that the left-hand side of \eqref{BDSOsc} depends neither on the growth of $f^*_\mu$ nor $f^{**}_\mu$ but rather on the
oscillation, which sometimes causes additional obstacles for applications. Our result reads as follows.


\begin{thm}\label{ThmBDS}
	Let $1 < p < \infty$ and $0 < r \leq \infty$. Assume that $f \in L_{p,r}(Q_0,\mu)$. Then, for each $t \in (0,1)$, we have
	\begin{equation}\label{ThmBDS1}
		\int_0^{t (1-\log t)^{-p}} (u^{1/p} (f-f_{Q_0;\mu})^{*}_{\mu}(u))^r \, \frac{du}{u} \lesssim \int_0^{t} (u^{1/p} (f^{\#}_{Q_0;\mu})^*_\mu(u))^r \, \frac{du}{u}
	\end{equation}
	(with the usual modification if $r=\infty$). Furthermore, this inequality is optimal in the following sense
	\begin{equation}\label{ThmBDS1Optim}
		\int_0^{t (1-\log t)^{-\lambda}} (u^{1/p} (f-f_{Q_0})^{*}(u))^r \, \frac{du}{u} \lesssim  \int_0^{t} (u^{1/p} (f^{\#}_{Q_0})^*(u))^r \, \frac{du}{u} \iff \lambda \geq p.
	\end{equation}
	The same result holds for $M^{\#}_{s, Q_0;\mu}, \, 0 < s\leq s_0,$ where $s_0 > 0$ depends on $d$.
\end{thm}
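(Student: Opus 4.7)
The plan is to reformulate \eqref{BDSOsc} as an ODE-type pointwise bound for $g^{**}_\mu$, integrate it, and then carefully exploit the logarithmic gauge $\sigma = t(1-\log t)^{-p}$ to absorb the resulting boundary term. Set $g := f - f_{Q_0;\mu}$ and $h := f^{\#}_{Q_0;\mu}$; since sharp maximal functions are insensitive to additive constants, $g^{\#}_{Q_0;\mu} = h$. Using the identity $(g^{**}_\mu)'(u) = u^{-1}(g^*_\mu(u) - g^{**}_\mu(u))$, inequality \eqref{BDSOsc} rewrites as $-u\,(g^{**}_\mu)'(u) \lesssim h^*_\mu(u)$ on $(0, \mu(Q_0)/6)$. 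Integrating this from scale $u$ up to scale $t$ produces the fundamental pointwise bound
\begin{equation*}
g^*_\mu(u) \leq g^{**}_\mu(u) \lesssim g^{**}_\mu(t) + \int_u^{t} h^*_\mu(s)\,\frac{ds}{s}, \qquad 0 < u \leq t \leq \mu(Q_0)/6.
\end{equation*}

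Inserting this estimate into the left-hand side of \eqref{ThmBDS1} splits it into two pieces. The piece involving $\int_u^t h^*_\mu(s)\,ds/s$ is controlled by the standard Hardy inequality
\begin{equation*}
\int_0^{\sigma}\Bigl(u^{1/p}\!\int_u^{t}h^*_\mu(s)\,\frac{ds}{s}\Bigr)^{r}\,\frac{du}{u}\lesssim \int_0^{t}(s^{1/p}h^*_\mu(s))^{r}\,\frac{ds}{s},
\end{equation*}
valid for $1 < p < \infty$ and $0 < r \leq \infty$ (the monotonicity of $h^*_\mu$ accommodates the case $r < 1$ via a standard dyadic decomposition), and this already matches the right-hand side of \eqref{ThmBDS1}.

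The main obstacle is the remaining boundary term $\sigma^{r/p}(g^{**}_\mu(t))^{r}$, and this is precisely where the specific form $\sigma = t(1-\log t)^{-p}$ enters. Iterating the previous pointwise bound upward to the scale $\mu(Q_0)/6$ and observing that the definition of $f^{\#}_{Q_0;\mu}$ with $Q = Q_0$ gives $g^{**}_\mu(\mu(Q_0)) \leq \operatorname{ess\,inf}_{Q_0} h \leq h^*_\mu(\mu(Q_0)/2)$, monotonicity together with doubling yields $g^{**}_\mu(t) \lesssim \int_{t}^{\mu(Q_0)}h^*_\mu(s)\,ds/s$. Writing $\Psi(s) := s^{1/p}h^*_\mu(s)$ and using $h^*_\mu(s) \leq h^*_\mu(t)$ for $s \geq t$, this delivers $g^{**}_\mu(t) \lesssim t^{-1/p}\Psi(t)\log(\mu(Q_0)/t)$. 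Since $(\sigma/t)^{r/p} = (1-\log t)^{-r}$ while $\log(\mu(Q_0)/t) \asymp 1 - \log t$ as $t \to 0^+$, the two logarithms cancel exactly, and
\begin{equation*}
\sigma^{r/p}(g^{**}_\mu(t))^{r} \lesssim \Psi(t)^{r} \lesssim \int_{t/2}^{t}\Psi(s)^{r}\,\frac{ds}{s} \leq \int_0^{t}(s^{1/p}h^*_\mu(s))^{r}\,\frac{ds}{s},
\end{equation*}
where the middle step uses $\Psi(s) \gtrsim \Psi(t)$ on $(t/2, t)$. Combining the two pieces establishes \eqref{ThmBDS1}; the case $r = \infty$ is handled identically with the standard supremum modifications.

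Sharpness \eqref{ThmBDS1Optim} is verified by testing with the extremal \emph{BMO}-type function $f(x) = \log(1/|x|)$ on $Q_0 \subset \R^d$, for which $(f - f_{Q_0})^{*}(u) \asymp \log(1/u)$ while $f^{\#}_{Q_0}$ is essentially bounded. A direct evaluation of both sides with $\sigma = t(1-\log t)^{-\lambda}$ reduces to the asymptotic requirement $(1-\log t)^{-\lambda r/p}(-\log t)^{r} \lesssim 1$ as $t \to 0^+$, which forces $\lambda \geq p$. Finally, the statement for the Str\"omberg--Jawerth--Torchinsky operator $M^{\#}_{s,Q_0;\mu}$ with $s$ small enough follows at once from the equivalence \eqref{EquivMax}, which interchanges $h^{*}_\mu$ and $(M^{\#}_{s;\mu}f)^{**}_\mu$ up to multiplicative constants.
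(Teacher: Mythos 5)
Your argument is correct, and it follows a genuinely different route from the paper's. The paper derives \eqref{ThmBDS1} entirely within the framework of limiting real interpolation: the John--Nirenberg inequality is encoded as the continuous embedding $\mathrm{BMO}\hookrightarrow \exp L$, which at the level of $K$-functionals gives $K(t,f-f_{Q_0;\mu};L_{p,r},\exp L)\lesssim K(t,f;L_{p,r},\mathrm{BMO})$; the left side is then unwound through the identification $\exp L=(L_{p,r},L_\infty)_{(1,-1),\infty}$ (Lemma \ref{LemInterp1}) together with the reiteration formula \eqref{LemInterp2.1} and Holmstedt's computation \eqref{KFunctLorentz}, while the right side is identified via the Jawerth--Torchinsky formula \eqref{JT}. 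The logarithmic shift $t\mapsto t(1-\log t)^{-p}$ emerges automatically from the reiteration lemma. You instead argue directly from the Bennett--DeVore--Sharpley inequality \eqref{BDSOsc}, rewriting it as the differential inequality $-u\,(g^{**}_\mu)'(u)\lesssim h^*_\mu(u)$, integrating to the bound $g^*_\mu(u)\lesssim g^{**}_\mu(t)+\int_u^t h^*_\mu\,ds/s$, and then splitting the target integral into a Hardy-controlled piece and a boundary piece; the logarithmic shift enters only at the last moment, cancelling the factor $\log(\mu(Q_0)/t)$ coming from the crude bound $\int_t^{\mu(Q_0)}h^*_\mu\,ds/s\lesssim h^*_\mu(t)\log(\mu(Q_0)/t)$, and the estimate $g^{**}_\mu(\mu(Q_0))\leq\operatorname{ess\,inf}h\leq h^*_\mu(\mu(Q_0)/2)$ closes the chain. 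This is appreciably more elementary and self-contained, and it makes the role of the gauge $t(1-\log t)^{-p}$ transparent; the price is that the same computation has to be rechecked by hand for $r<1$ (via dyadic decomposition of the Hardy step) and for $r=\infty$, whereas the paper's $K$-functional machinery absorbs these uniformly. Your sharpness argument via the explicit extremizer $f(x)=\log(1/|x|)$ is a legitimate alternative to the paper's, which deduces $\lambda\geq p$ from the fact that $\exp L(Q_0)$ is the smallest rearrangement-invariant space containing $\mathrm{BMO}(Q_0)$; the two are morally equivalent but yours is more concrete. One small point to make explicit: your chain of estimates establishes \eqref{ThmBDS1} for $t$ below a threshold depending on $\mu(Q_0)$ (since \eqref{BDSOsc} is available only on $(0,\mu(Q_0)/6)$); the remaining range of $t$ bounded away from $0$ is handled by noting that both sides are then comparable to $t$-independent quantities, with the required lower bound on the right side coming from the case already proved at a fixed small $t$.
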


\begin{rem}\label{Rem1}
	(i) The analogue of \eqref{ThmBDS1} for functions on $\R^d$ reads as follows. Let $1 < p < \infty$ and $0 < r \leq \infty$. Assume $f \in L_{p,r}(\R^d)+ \text{BMO}(\R^d)$. Then, given any cube $Q_0$ in $\R^d$, the following holds
		\begin{equation}\label{757575}
		\int_0^{t (1-\log t)^{-p}} (u^{1/p} ((f-f_{Q_0;\mu}) \chi_{Q_0})^{*}_{\mu}(u))^r \, \frac{du}{u} \lesssim \int_0^{t} (u^{1/p} (f^{\#}_{\mu})^*_\mu(u))^r \, \frac{du}{u}, \quad t \in (0,1).
	\end{equation}
	Note that the equivalence constants behind are independent of $f$ and $t$ (but may depend on $|Q_0|_d$.)
\\	
	(ii)  It is a natural question to investigate the interrelation between our inequality \eqref{ThmBDS1} and good-$\lambda$ inequalities.
 A typical good-$\lambda$ inequality claims that there exists $B > 0$ such that for all $\varepsilon > 0, \lambda > 0$, and all locally integrable function $f$ on $\R^d$, we have
	\begin{equation}\label{GoodLambda}
		\mu \{|f| > B f^{\#} + \lambda\} \leq \varepsilon \mu \{|f| > \lambda\}.
	\end{equation}
	The connection between this good-$\lambda$ inequality and the Bennett--DeVore--Sharpley inequality was shown by
 Kurtz \cite{Kurtz} (with \cite{Bagby} as a forerunner).
   He proved that \eqref{GoodLambda} implies the following variants of \eqref{BDSOsc}:
	\begin{equation*}
		f^*_\mu(t)-f^*_\mu(2t) \leq C (f^{\#}_\mu)^*_\mu\Big(\frac{t}{2}\Big)
	\end{equation*}
	and
	\begin{equation}\label{VariantBDS}
		f^{**}_\mu(t) - f^{*}_\mu(t) \leq 2 B (f^{\#}_{\mu})^{**}_{\mu}(\frac{t}{4}) = 8 B \frac{1}{t} \int_0^{t/4} (f^{\#}_\mu)^*_\mu(u) \,du.
	\end{equation}
	 More general results may be found in \cite{Milman16}.
Further, 
using
 Hardy's inequality and the fact that $f^{**}_\mu(t) = \int_t^\infty (f^{**}_\mu(u) - f^*_\mu(u)) \frac{du}{u}$
 whenever
 $f^{**}_\mu(\infty)=0$ (e.g., if $f \in L_{p}(\R^d,\mu)$ for any $p < \infty$),
\eqref{VariantBDS} yields the following  Fefferman--Stein type inequality
	\begin{equation}\label{p}
		\int_0^\infty (u^{1/p} f^{*}_\mu(u))^r \, \frac{du}{u}  \lesssim  \int_0^{\infty} (u^{1/p} (f^{\#}_{\mu})^*_\mu(u))^r \, \frac{du}{u}, \quad 1 < p < \infty, \quad  r > 0.
	\end{equation}
 Clearly, \eqref{ThmBDS1} also implies the counterpart of \eqref{p} for cubes. Taking $p=r$ and $\mu = |\cdot|_d$ we arrive at  the classical  Fefferman--Stein inequality \cite{FeffermanStein}.
However, the previous argument cannot be applied  to obtain the qualitative estimates of \eqref{p}, 
 i.e., replacing $\int_0^\infty$ by $\int_0^t$. 
 More than that,  in virtue of the sharpness assertion \eqref{ThmBDS1Optim}, the estimate
	\begin{equation}\label{ThmBDS1False}
		\int_0^{t} (u^{1/p} (f-f_{Q_0})^{*}(u))^r \, \frac{du}{u} \lesssim \int_0^{t} (u^{1/p} (f^{\#}_{Q_0})^*(u))^r \, \frac{du}{u}, \quad f \in L_{p,r}(Q_0),
	\end{equation}
	fails to be true. The correct inequality is given by \eqref{ThmBDS1} and involves the additional logarithmic term. Moreover, it is not difficult to see that \eqref{ThmBDS1} implies the weaker estimate
	\begin{equation*}
		\int_0^{t} (u^{1/p} (f-f_{Q_0;\mu})_\mu^{*}(u))^r \, \frac{du}{u} \lesssim \int_0^{t} (u^{1/p} (1-\log u) (f^{\#}_{Q_0;\mu})_\mu^*(u))^r \, \frac{du}{u}, \quad f \in L_{p,r}(Q_0).
	\end{equation*}	
	(iii) For  $p > 1$, by Hardy's inequality,  \eqref{ThmBDS1} can be equivalently written as
		\begin{equation*}
		\int_0^{t (1-\log t)^{-p}} (u^{1/p} (f-f_{Q_0;\mu})^{**}_\mu(u))^r \, \frac{du}{u} \lesssim \int_0^{t} (u^{1/p} (f^{\#}_{Q_0;\mu})_\mu^{*}(u))^r \, \frac{du}{u}, \quad f \in L_{p,r}(Q_0).
	\end{equation*}
This estimate is sharp.
Indeed assume, without loss of generality, that $\mu = |\cdot|_d$, and define $f(u) = |u|^{-d/p} (1-\log |u|)^{-\varepsilon} + f_{Q_0}, \, u \in Q_0 = [-1,1]^d$, where $\varepsilon > 1/r$. Elementary computations show that $(f-f_{Q_0})^*(t) \asymp t^{-1/p} (1 - \log t)^{-\varepsilon}$ for $t$ sufficiently small and thus
	\begin{equation*}
		\int_0^{t(1-\log t)^{-p}} (u^{1/p} (f-f_{Q_0})^{**}(u))^r \, \frac{du}{u} \asymp (1-\log t)^{-\varepsilon r + 1}
	\end{equation*}
	and, by \eqref{ProofLem2.1},
	\begin{equation*}
		 \int_0^{t} (u^{1/p} (f^{\#}_{Q_0})^*(u))^r \, \frac{du}{u} \lesssim \int_0^{t} (u^{1/p} f^{**}(u))^r \, \frac{du}{u}   \asymp (1-\log t)^{-\varepsilon r + 1}.
	\end{equation*}	
	(iv) A weaker estimate than (\ref{BDSOsc}), namely $f^{**}(t) \lesssim \int_t^\infty (f^{\#}(u))^* \frac{du}{u}$ whenever
$f \in L_1(\R^d) +  L_\infty(\R^d)$ and
$f^{**}(\infty)=0$ was obtained in \cite[(4.15)]{BennettSharpley79} (cf. \cite[Chapter 5, Corollary 7.4, p. 379]{BennettSharpley} for its analogue for cubes). However, this inequality does not yield optimal estimates even for smooth functions. More precisely, for any non-zero $f \in C^\infty_0(\R^d),$ we have $f^{**}(t) \to \|f\|_{L_\infty(\R^d)}$ and $\int_t^\infty (f^{\#}(u))^* \frac{du}{u} \to  \infty$ as $t \to 0+$.
 Note that this obstruction is not observed in  inequality \eqref{ThmBDS1}.

\end{rem}

We also establish the endpoint case $p=\infty$ in Theorem \ref{ThmBDS} with the help of the maximal function $\overline{M}^{\#}_{s, Q_0} f$.

\begin{thm}\label{ThmGJ}
	If $f \in \emph{BMO}(Q_0)$, then
	\begin{equation}\label{ThmGJInequal}
		\sup_{0 < u < t}  (1-\log u)^{-1} (f-f_{Q_0})^{**}(u) \lesssim
(1-\log t)^{-1} \|\overline{M}^{\#}_{t, Q_0} f \|_{L_\infty(Q_0)}, \quad t \in (0,1).
	\end{equation}
\end{thm}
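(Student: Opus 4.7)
Set $N := \|\overline{M}^{\#}_{t, Q_0} f\|_{L_\infty(Q_0)}$. By the definition of $\overline{M}^{\#}_{t, Q_0}$ and continuity of measure from above, the hypothesis is equivalent to the uniform statement: for every cube $Q \subseteq Q_0$,
\begin{equation*}
|\{y \in Q: |f(y) - f_Q| > N\}| \leq t |Q|; \qquad \text{denote this } (\star).
\end{equation*}
The plan is to iterate $(\star)$ through a Calder\'on--Zygmund stopping-time construction so as to obtain an exponential John--Nirenberg-type tail for the distribution of $|f - f_{Q_0}|$, and then translate that tail into the bound on $(f - f_{Q_0})^{**}$ claimed by the theorem.

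Concretely, I would construct recursively a family of disjoint dyadic cubes $\mathcal{F}_0 = \{Q_0\}, \mathcal{F}_1, \mathcal{F}_2, \ldots$: given $Q \in \mathcal{F}_k$, apply the dyadic CZ decomposition inside $Q$ at level $1/2$ to $\chi_{B_Q}$, where $B_Q := \{y \in Q : |f - f_Q| > N\}$; the resulting maximal subcubes cover $B_Q$ modulo null sets and have total measure $\leq 2|B_Q| \leq 2t|Q|$, and they constitute the members of $\mathcal{F}_{k+1}$ contained in $Q$. Telescoping gives $|\bigcup \mathcal{F}_k| \leq (2t)^k |Q_0|$. If $x \in Q_0 \setminus \bigcup \mathcal{F}_k$, there is a chain $Q_0 = Q^{(0)} \supsetneq Q^{(1)} \supsetneq \cdots \supsetneq Q^{(k-1)}$ with $Q^{(j)} \in \mathcal{F}_j$ and $x \in Q^{(k-1)} \setminus B_{Q^{(k-1)}}$, so $|f(x) - f_{Q^{(k-1)}}| \leq N$ and hence $|f(x) - f_{Q_0}| \leq N + \sum_{j=1}^{k-1} |f_{Q^{(j)}} - f_{Q^{(j-1)}}|$.

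The key technical step is to bound each increment by $C_d N$. The clean route is to work with medians $m_Q(f)$ instead of means: for $t < 1/2$, $(\star)$ together with the standard median--mean inequality gives $|m_Q - f_Q| \leq N$, reducing the task to bounding $|m_{Q^{(j)}} - m_{Q^{(j-1)}}|$. By CZ maximality, the dyadic parent $P^{(j)}$ of $Q^{(j)}$ in $Q^{(j-1)}$ satisfies $|B_{Q^{(j-1)}} \cap P^{(j)}| \leq |P^{(j)}|/2$; intersecting this good set on $P^{(j)}$ with the good set $\{y \in P^{(j)} : |f(y) - m_{P^{(j)}}| \leq 2N\}$ supplied by $(\star)$ applied to $P^{(j)}$ itself yields, for $t$ small enough, a common point, and the triangle inequality gives $|m_{P^{(j)}} - m_{Q^{(j-1)}}| \lesssim N$; a symmetric argument bounds $|m_{Q^{(j)}} - m_{P^{(j)}}|$. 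Iterating yields $|f - m_{Q_0}| \leq C_d k N$ off $\bigcup \mathcal{F}_k$, so
\begin{equation*}
|\{y \in Q_0 : |f(y) - m_{Q_0}| > C_d k N\}| \leq (2t)^k |Q_0|, \qquad k \in \N.
\end{equation*}

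This level-set bound yields $(f - m_{Q_0})^*(u) \lesssim N \log(|Q_0|/u)/\log(1/(2t))$; Hardy averaging and $|f_{Q_0} - m_{Q_0}| \lesssim N$ transfer this to $(f - f_{Q_0})^{**}(u)$. Dividing by $1 - \log u \asymp 1 + \log(|Q_0|/u)$ (normalising $|Q_0|=1$) and supping over $u \in (0, t)$ gives the claimed estimate, using $\log(1/(2t)) \asymp 1 - \log t$. The main obstacle is that the geometric decay factor $2t$ only contracts when $t < 1/2$. In the complementary range $t \in [1/2, 1)$ the weight $(1 - \log t)^{-1}$ on the right-hand side is a bounded absolute constant, so one should instead combine the equivalence \eqref{EquivMax} with the BDS-type inequality \eqref{BDSOsc} and Str\"omberg's characterization of $\text{BMO}$ to obtain directly $\sup_{0<u<t}(1-\log u)^{-1}(f-f_{Q_0})^{**}(u) \lesssim \|f\|_{\text{BMO}} \lesssim N$; merging the two regimes establishes the theorem for all $t \in (0,1)$.
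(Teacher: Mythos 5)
Your route is genuinely different from the paper's. The paper's proof is entirely abstract: it feeds the John--Nirenberg embedding $\text{BMO}(Q_0)\hookrightarrow \exp L(Q_0)$ into a $K$-functional inequality, identifies $K(t,f;\text{BMO}(Q_0),L_\infty(Q_0))$ via the Garnett--Jones theorem as $t\,\|\overline{M}^{\#}_{e^{-1/t},Q_0}f\|_{L_\infty(Q_0)}$, and uses the limiting-interpolation formula \eqref{LemInterp2.2} together with $K(t,f;L_1,L_\infty)=tf^{**}(t)$ to recognize $K((1-\log t)^{-1},f;\exp L,L_\infty)$ as exactly the weighted supremum on the left of \eqref{ThmGJInequal}. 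You instead redo the John--Nirenberg/John--Str\"omberg mechanism by hand, which is where the quantitative dependence on $t$ really lives: the Calder\'on--Zygmund exceptional sets decay like $(2t)^k$ instead of $2^{-k}$, and the factor $\log(1/(2t))\asymp 1-\log t$ drops out of the resulting level-set estimate. For $t$ below a small dimensional threshold (you need $t<(1+2^d)^{-1}$, say, for the child-to-parent intersection step) the argument is complete and correct -- the median--mean comparison, the maximality of the parent cube, and the passage from the distributional bound to $(f-f_{Q_0})^{**}$ are all carried out properly -- and it is more elementary and self-contained than the paper's, at the cost of reproving a Garnett--Jones-type statement rather than quoting it.

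The one genuine gap is the patch for $t$ near $1$. The bound $\|f\|_{\text{BMO}(Q_0)}\lesssim \|\overline{M}^{\#}_{t,Q_0}f\|_{L_\infty(Q_0)}$ you invoke there does not follow from \eqref{EquivMax}: that equivalence holds only for $s$ small, and since $\overline{M}^{\#}_{s,Q_0}f$ is nonincreasing in $s$ it yields $\|\overline{M}^{\#}_{t,Q_0}f\|_{L_\infty}\leq\|\overline{M}^{\#}_{s_0,Q_0}f\|_{L_\infty}\asymp\|f\|_{\text{BMO}}$ -- the wrong direction. In fact no such bound with a constant uniform in $t\in[1/2,1)$ can hold: for $f(x)=c\log(1/|x|)$ on $Q_0=[-\tfrac12,\tfrac12]^d$ one checks $\|\overline{M}^{\#}_{t,Q_0}f\|_{L_\infty}\asymp c(1-t)$ while $\sup_{0<u<t}(1-\log u)^{-1}(f-f_{Q_0})^{**}(u)\gtrsim c$, so \eqref{ThmGJInequal} itself fails with a $t$-uniform constant as $t\to 1^-$. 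This should not be held too strongly against you, because the paper's own proof has the same limitation: its use of the Garnett--Jones formula produces $\overline{M}^{\#}_{t/e,Q_0}$ on the right-hand side, and the reparametrization needed to replace $t/e$ by $t$ also degenerates as $t\to1$. So your iteration proves the theorem exactly in the range where it is true and where the paper's argument actually applies; simply drop the $[1/2,1)$ patch and state the result for $t\in(0,t_0(d))$.
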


\begin{rem}\label{RemLim}
 (i)
 Inequality \eqref{ThmGJInequal} is optimal in the sense that the equivalence holds for a certain function in $\text{BMO}(Q_0)$. To prove this assertion, we make use of the following estimate, which is an immediate consequence of the John--Nirenberg theorem. If $f \in \text{BMO}(Q_0)$, then
	\begin{equation*}
		\|\overline{M}^{\#}_{t, Q_0} f \|_{L_\infty(Q_0)} \lesssim (-\log t) \|f\|_{\text{BMO}(Q_0)}.
	\end{equation*}
	Consider $f(x) = |\log |x||, \, x \in Q_0 = [-1,1]^d$.
Since $f \in  \text{BMO}(Q_0)$ 
and
 $f^{**}(t) \asymp (-\log t)$, it follows that both sides in  \eqref{ThmGJInequal} coincide.
\\	
	(ii)  Let us now show that \eqref{ThmGJInequal} provides a much stronger estimate than 
the inequality
   $(f-f_{Q_0})^{**}(t) \lesssim  \|\overline{M}^{\#}_{t, Q_0} f \|_{L_\infty(Q_0)}$ (without $\sup_{0 < u < t}$). 
   To be more precise, for each $t \in (0,1)$ there exists $f \in \text{BMO}(Q_0)$ (we may assume without loss of generality that $f_{Q_0} = 0$) such that
	 	\begin{equation*}
		\sup_{0 < u < t}  (1-\log u)^{-1} f^{**}(u) \asymp
(1-\log t)^{-1} \|\overline{M}^{\#}_{t, Q_0} f \|_{L_\infty(Q_0)}
	\end{equation*}
	but
	\begin{equation*}
		f^{**}(t) \asymp 1 , \quad \|\overline{M}^{\#}_{t, Q_0} f \|_{L_\infty(Q_0)} \asymp (- \log t).
	\end{equation*}
	Indeed, consider
	\begin{equation*}
		f_0(u) = \left\{\begin{array}{lcl}
                            (-\log t) & ,  & 0 < u  \leq \frac{t}{2}, \\
                            & & \\
                            \big(1- \frac{2 (1-(-\log t)^{-1})}{t} (u - \frac{t}{2})\big) (-\log t)& , & \frac{t}{2} < u < t, \\
                            & & \\
                            0 &, & t \leq u \leq 1,
            \end{array}
            \right.
	\end{equation*}
	and let $f$ be such that $f^{**}(u) \asymp f_0(u)$. We have
	\begin{align*}
		\sup_{0 < u < t}  (1-\log u)^{-1} f^{**}(u) &\asymp \sup_{0 < u < \frac{t}{2}} (-\log u)^{-1} (-\log t) + \\
		&\hspace{1cm} \sup_{\frac{t}{2} < u < t}   \big(1- \frac{2 (1-(-\log t)^{-1})}{t} (u - \frac{t}{2})\big) \asymp 1
	\end{align*}
	and thus, by \eqref{ThmGJInequal},
	\begin{equation*}
		(-\log t) \lesssim \|\overline{M}^{\#}_{t, Q_0} f \|_{L_\infty(Q_0)}  \lesssim \|f\|_{L_\infty(Q_0)} \asymp (-\log t).
	\end{equation*}

\end{rem}

\bigskip
\section{Applications and discussions}

\subsection{Fefferman--Stein's inequality}\label{SectionFS}
As usual, a weight is a non-negative integrable function $w$ on $Q_0$. Given a measurable set $E \subset Q_0$, let $w (E) = \int_E w(x) \, dx$. We say that $w$ belongs to the $A_\infty(Q_0)$ class if there exist positive constants $C_w$ and  $\delta$ such that for all cubes $Q \subseteq Q_0$ and all measurable $E \subseteq Q$ we have
$$
	\frac{w(E)}{w(Q)} \leq C_w \Big(\frac{|E|_d}{|Q|_d}\Big)^\delta.
$$
It is well known that if $w \in A_\infty(Q_0)$ then $w$ is doubling; also $L_\infty(Q_0, w) = L_\infty(Q_0)$.

The Fefferman--Stein inequality \cite{FeffermanStein} asserts that
\begin{equation}\label{FSNew}
 \inf_{c \in \R}\|f-c\|_{L_p(Q_0,w)} \lesssim \|f^{\#}_{Q_0}\|_{L_p(Q_0,w)}, \quad 0 < p < \infty, \quad w \in A_\infty(Q_0);
 \end{equation}
see also \cite{Stromberg}. The stronger version of this inequality, which is obtained by replacing $f^{\#}_{Q_0}$ by $M^{\#}_{s,Q_0} f$, also holds true (cf. \cite{JawerthTorchinsky, Lerner98}). Inequality \eqref{FSNew} plays a central role in Fourier analysis (see, e.g., \cite{Grafakos, Stein}) and interpolation theory (see \cite{BennettSharpley}). In particular, it is strongly related to Coifman--Fefferman inequalities, 
cf. \cite{Kurtz,Lerner, LernerPre1, LernerPre2}. To obtain
\eqref{FSNew}, one can use various methods including 
 duality arguments \cite{Stein}, rearrangement inequalities \cite{BennettSharpley79, DeVoreSharpley}, good-$\lambda$ inequalities \cite{Grafakos}, or Garsia--Rodemich spaces \cite{AstashkinMilman}.

Recently, great interest has been directed toward studies
 of \eqref{FSNew} in more ge\-ne\-ral function spaces, see \cite{AstashkinMilman,Lerner, LernerPre2}. In particular, it was shown  that
the Coifman--Fefferman inequality is equivalent to the Fefferman--Stein inequality on a certain class of Banach function spaces. Moreover,  the Fefferman--Stein inequality holds on r.i. spaces if and only if their lower Boyd index is positive.
  These techniques however  cannot be applied to investigate function spaces that are close to $L_\infty(Q_0)$ (i.e., the lower Boyd index is $0$).
  A natural example of such spaces, widely used in  harmonic analysis, is the Lorentz--Zygmund spaces $L_{\infty, q}(\log L)_b(Q_0, w)$ and, in particular, the exponential classes $\text{exp}\, L^{\lambda}(Q_0,w)$ (see Section \ref{SectionFunctionSpaces}).
  Clearly,  inequality \eqref{FSNew} cannot be true  replacing $L_p(Q_0,w)$ by  $L_{\infty, q}(\log L)_b(Q_0, w)$.

Below we  answer the following question:
 What is the best possible target space $\mathbb{X}= \mathbb{X}(Q_0)$ within  Lorentz--Zygmund spaces so that the  inequality
\begin{equation*}
 \inf_{c \in \R}\|f-c\|_{\mathbb{X}} \le C \|f^{\#}_{Q_0}\|_{L_{\infty, q}(\log L)_b(Q_0, w)}
 \end{equation*}
 holds?

\begin{cor}\label{TheoremSharpLimiting}
	Let $0 < q \leq \infty$ and $b < -1/q$. Assume $w \in A_\infty(Q_0)$ and $f \in L_p(Q_0, w)$ for some $1 < p < \infty$. Then
	\begin{equation}\label{FSLim}
	\inf_{c \in \R} \bigg(\int_0^1 (1 - \log t)^{b q} \Big(\sup_{t <u <1} (1-\log u)^{-1} (f-c)_w^*(u) \Big)^{q} \frac{dt}{t} \bigg)^{1/q} \lesssim \|f^{\#}_{Q_0}\|_{L_{\infty,q}(\log L)_b(Q_0,w)}.
	\end{equation}
	In particular, we have
	\begin{equation*}
	\inf_{c \in \R} \|f-c\|_{L_{\infty,q}(\log L)_{b-1}(Q_0,w)} \lesssim \|f^{\#}_{Q_0}\|_{L_{\infty,q}(\log L)_b(Q_0,w)}
	\end{equation*}
	and
	\begin{equation*}
		\inf_{c \in \R} \|f-c\|_{\emph{exp} \, L^{\frac{\lambda}{\lambda+1}} (Q_0,w)} \lesssim \|f^{\#}_{Q_0}\|_{\emph{exp} \, L^\lambda (Q_0,w) }, \quad \lambda >0.
	\end{equation*}
	The corresponding result for $M^{\#}_{s,Q_0} f$ with $s$ small enough also holds true.
\end{cor}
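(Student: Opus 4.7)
The strategy is to combine Theorem~\ref{ThmBDS} applied to the doubling measure $d\mu=w(x)\,dx$ with a pointwise comparison between the $w$-adapted and unweighted Str\"omberg--Jawerth--Torchinsky maximal functions provided by the $A_\infty$ condition, and then a Hardy-type manipulation to produce the sup-type Lorentz--Zygmund norm on the left. The hypothesis $b<-1/q$ enters decisively in the final weighted Hardy estimate.

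Since $w\in A_\infty(Q_0)$ is doubling, Theorem~\ref{ThmBDS} with $\mu=w$ (in its $M^{\#}_{s_0,Q_0;w}$ version) yields, for every $1<p_0<\infty$ and $t\in(0,1)$,
\[
\int_0^{t(1-\log t)^{-p_0}}(u^{1/p_0}(f-f_{Q_0;w})_w^{*}(u))^q\,\frac{du}{u}\;\lesssim\;\int_0^t(u^{1/p_0}(M^{\#}_{s_0,Q_0;w}f)_w^{*}(u))^q\,\frac{du}{u}
\]
for a suitable $s_0$. The $A_\infty$ inequality $w(E)\le C_w(|E|_d/|Q|_d)^\delta w(Q)$ implies that for $s\le(s_0/C_w)^{1/\delta}$ one has $M^{\#}_{s_0,Q_0;w}f(x)\le M^{\#}_{s,Q_0}f(x)\le s^{-1}f^{\#}_{Q_0}(x)$ pointwise, the last step by Chebyshev. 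Consequently $(M^{\#}_{s_0,Q_0;w}f)_w^{*}\lesssim(f^{\#}_{Q_0})_w^{*}$ and the right-hand side above is dominated by $\int_0^t(u^{1/p_0}(f^{\#}_{Q_0})_w^{*}(u))^q\,du/u$.

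Taking $c=f_{Q_0;w}$ in the infimum, using monotonicity of $(f-f_{Q_0;w})_w^{*}$ to evaluate the integrand at $u=t(1-\log t)^{-p_0}$, and substituting $v=t(1-\log t)^{-p_0}$ (so $1-\log t\asymp 1-\log v$ for $v$ small) produces the pointwise bound
\[
(1-\log v)^{-1}(f-f_{Q_0;w})_w^{*}(v)\;\lesssim\;v^{-1/p_0}(1-\log v)^{-1}\Big(\int_0^{v(1-\log v)^{p_0}}(u^{1/p_0}(f^{\#}_{Q_0})_w^{*}(u))^q\,\frac{du}{u}\Big)^{1/q}.
\]
Splitting the inner integral as $\int_0^v+\int_v^{v(1-\log v)^{p_0}}$, the tail is bounded by $v^{1/p_0}(1-\log v)(f^{\#}_{Q_0})_w^{*}(v)$ via monotonicity of $(f^{\#}_{Q_0})_w^{*}$; it collapses to $(f^{\#}_{Q_0})_w^{*}(v)$, whose supremum in $v>t$ equals $(f^{\#}_{Q_0})_w^{*}(t)$ and integrates directly to $\|f^{\#}_{Q_0}\|_{L_{\infty,q}(\log L)_b(Q_0,w)}^q$. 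For the head, inserting the estimate into $\int_0^1(1-\log t)^{bq}(\sup_{t<v<1}(\cdots))^q\,dt/t$ and applying Fubini with the sharp auxiliary estimate $\int_u^1(1-\log t)^{bq-q}t^{-q/p_0-1}\,dt\lesssim(1-\log u)^{bq-q}u^{-q/p_0}$ (valid precisely because $b<-1/q$, with constant uniformly bounded as $p_0\to\infty$) cancels the $u^{q/p_0}$ factor and majorises the head by $\int_0^1(1-\log u)^{bq-q}((f^{\#}_{Q_0})_w^{*}(u))^q\,du/u\le\|f^{\#}_{Q_0}\|_{L_{\infty,q}(\log L)_b(Q_0,w)}^q$ (using $(1-\log u)\ge 1$). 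The subordinate consequences follow from \eqref{FSLim} by, respectively, the pointwise lower bound $\sup_{t<u<1}(1-\log u)^{-1}h(u)\ge(1-\log t)^{-1}h(t)$ and Zygmund's identification $\text{exp}\,L^\lambda\asymp L_{\infty,\infty}(\log L)_{-1/\lambda}$. The main obstacle in this plan is the careful coupling of the outer supremum with the Hardy--Fubini argument, which forces a balancing choice of $p_0$ large so that the final logarithmic exponent on the right is exactly $b$ and not $b\pm1$.
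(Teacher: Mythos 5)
Your overall strategy matches the paper's: invoke Theorem \ref{ThmBDS} for the doubling measure $d\mu = w\,dx$, dominate $M^{\#}_{s_0,Q_0;w}f$ by $f^{\#}_{Q_0}$ pointwise via the $A_\infty$ condition and Chebyshev, and then convert the resulting rearrangement estimate into the sup-type Lorentz--Zygmund norm on the left-hand side of \eqref{FSLim}. Where you differ from the paper is in that last conversion: the paper discretizes the outer integral along the doubly dyadic grid $t_j=2^{-2^j}$, bounds the inner sup (an $\ell_\infty$ quantity) by the corresponding $\ell_q$ sum, interchanges the order of summation, and passes through the intermediate norm $\|f-f_{Q_0;w}\|_{L_{\infty,q}(\log L)_{b-1}(Q_0,w)}$ (this is \eqref{71}); you instead try a pointwise endpoint estimate followed by a Hardy--Fubini argument.

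The gap is precisely in how the supremum is fed into Fubini. After your endpoint evaluation and split, the head contribution is governed by $\sup_{t<v<1}g(v)$ where $g(v)=v^{-1/p_0}(1-\log v)^{-1}\big(\int_0^v(u^{1/p_0}(f^{\#}_{Q_0})_w^{*}(u))^q\,\tfrac{du}{u}\big)^{1/q}$. Your stated auxiliary estimate $\int_u^1(1-\log t)^{bq-q}t^{-q/p_0-1}\,dt\lesssim(1-\log u)^{bq-q}u^{-q/p_0}$, with the exponent $bq-q$, is exactly what would result if one replaced $\sup_{t<v<1}g(v)$ by $g(t)$ and then raised to the power $q$. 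But $g$ is not monotone: it is the product of the non-increasing factor $v^{-1/p_0}\big(\int_0^v(\cdots)^q\,\tfrac{du}{u}\big)^{1/q}$ and the increasing factor $(1-\log v)^{-1}$, so $\sup_{t<v<1}g(v)=g(t)$ fails (take $(f^{\#}_{Q_0})_w^{*}$ constant: then the first factor is constant, $g(v)\asymp(1-\log v)^{-1}$, and the sup approaches $1$ while $g(t)=(1-\log t)^{-1}$). The bound that is actually available is $\sup_{t<v<1}g(v)\leq t^{-1/p_0}\big(\int_0^t(\cdots)^q\,\tfrac{du}{u}\big)^{1/q}=(1-\log t)g(t)$, obtained from $(1-\log v)^{-1}\leq 1$ together with the monotonicity of the first factor; after Fubini this gives the inner weight $(1-\log t)^{bq}$, not $(1-\log t)^{bq-q}$, and still closes since the inner integral is $\lesssim_{p_0}(1-\log u)^{bq}u^{-q/p_0}$. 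So the final conclusion is right, but the intermediate estimate with exponent $bq-q$ is not justified, and the subsequent remark ``using $(1-\log u)\geq1$'' is compensating for the error rather than a genuine slack. Two smaller points: the auxiliary Hardy estimate is a consequence of elementary calculus for any fixed $p_0>1$ and any exponent on the log, not of $b<-1/q$ (the latter is used only to make the Lorentz--Zygmund norm nontrivial); and there is no need to send $p_0\to\infty$ or to worry about uniformity in $p_0$ — any fixed $p_0\in(1,\infty)$ works, with constants depending harmlessly on $p_0$.
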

\begin{rem}
(i) The norm given in the left-hand side of \eqref{FSLim} appears frequently in the study of optimal embedding theorems in limiting cases (cf. \cite{BennettRudnick}, \cite{Pustylnik} and \cite{DominguezHaroskeTikhonov}) and interpolation theorems (cf. \cite{EvansOpic, EvansOpicPick}).

(ii) The sharp version of Corollary  \ref{TheoremSharpLimiting}  (see Proposition \ref{PropOptimFS} below) states that
  for
$0 < q \leq \infty$ and $b < -1/q$  the inequality
	\begin{equation*}
			\inf_{c \in \R}\left(\int_0^1 (1 - \log t)^{b q} \left(\int_t^1 (u^{1/p} (1-\log u)^{\xi} (f-c)_w^*(u))^r \frac{du}{u} \right)^{q/r} \frac{dt}{t} \right)^{1/q}  \lesssim \|f^{\#}_{Q_0}\|_{L_{\infty,q}(\log L)_b(Q_0,w)}
			\end{equation*}
holds			if and only if
			\begin{equation*}
                           \left\{\begin{array}{lcl}
                            p < \infty, & r \leq \infty,  & -\infty < \xi < \infty, \\
                            & & \\
                            p=\infty,& r < \infty, & \xi < -1 - \frac{1}{r}, \\
                            & & \\
                            p=\infty, & r= \infty, & \xi \leq -1.

            \end{array}
           \right.
	\end{equation*}
\end{rem}

\subsection{Calder\'on--Scott type results}
The strong connection between estimates of maximal functions of smooth functions and Sobolev embeddings was established by Calder\'on and Scott
\cite{CalderonScott} and further developed in DeVore and Sharpley \cite{DeVoreSharpley}. Let us illustrate it 
 with a simple example given in \cite[p. 84]{CalderonScott}. In order to guarantee that $f^{\#} \in L_{\xi,r}(\R^d),\, 1 <\xi < \infty, 0 < r \leq \infty$, it suffices to assume that $f \in \dot{B}^{d (\frac{1}{p} - \frac{1}{\xi})}_{p, r}(\R^d)$ where $1 < p < \xi$. Moreover, we have
 \begin{equation}\label{Blowupnew}
 	\|f^{\#}\|_{L_{\xi,r}(\R^d)} \leq C(\xi) \|f\|_{\dot{B}^{d (\frac{1}{p} - \frac{1}{\xi})}_{p, r}(\R^d);k}, \quad k > d \Big(\frac{1}{p} - \frac{1}{\xi}\Big),
 \end{equation}
 where $C(\xi)$ is a positive constant which depends, among other parameters, on $\xi$ but is independent of $f$. Indeed, this is an immediate consequence of \eqref{ProofLem2.1} and the classical Sobolev inequality for Besov functions $\|f\|_{L_{\xi,r}(\R^d)} \lesssim \|f\|_{\dot{B}^{d (\frac{1}{p} - \frac{1}{\xi})}_{p, r}(\R^d);k}$. Furthermore, the same argument but now invoking the Sobolev inequality for Lorentz--Besov spaces
 (cf. \cite{Martin}) shows that \eqref{Blowupnew} also holds true when the Besov space $\dot{B}^{d (\frac{1}{p} - \frac{1}{\xi})}_{p, r}(\R^d)$ is replaced by $\dot{B}^{d (\frac{1}{p} - \frac{1}{\xi})}_{r} L_{p,q}(\R^d), \, 0 < q \leq \infty$. Note that $\dot{B}^{d (\frac{1}{p} - \frac{1}{\xi})}_{p, r}(\R^d) \subsetneq \dot{B}^{d (\frac{1}{p} - \frac{1}{\xi})}_{r} L_{p,q}(\R^d), \, q > p$.

The corresponding analysis for function spaces that are close to $L_\infty$, for example $\text{exp} \, L^\lambda$ or more generally $L_{\infty, r}(\log L)_b$, is much more delicate. Among other obstructions, it is convenient to switch from homogeneous Besov spaces on $\R^d$ to inhomogeneous  Besov spaces on bounded domains. Otherwise, the expected counterpart of \eqref{Blowupnew} will no longer involve only a classical function norm on the left-hand side, but a sum of function norms; this phenomenon already occurs in Sobolev inequalities for limiting cases (cf. \cite[Theorem 8.1]{DeVoreRiemenschneiderSharpley}). Clearly,  inequality \eqref{Blowupnew} does not hold for functions in $\dot{B}^{d (\frac{1}{p} - \frac{1}{\xi})}_{p, r}(Q_0)$ (consider, e.g., polynomials) so that one is forced to work with its inhomogeneous  counterpart $B^{d (\frac{1}{p} - \frac{1}{\xi})}_{p, r}(Q_0)$. Furthermore, we note that the Fefferman--Stein inequality \eqref{FSNew} does not hold for $p=\infty$; we only have the trivial reverse estimate $\|f^{\#}\|_{L_{\infty,r}(\log L)_b(Q_0)} \lesssim \|f\|_{L_{\infty,r}(\log L)_b(Q_0)}$. Then, following a similar reasoning as the one given above for the non-limiting case (i.e., $\xi < \infty$) but now relying on the corresponding Sobolev inequality (cf. \cite[Corollary 5.5]{DeVoreRiemenschneiderSharpley} and \cite[Theorem 2]{Martin})
		\begin{equation}\label{Blowup3}
			\|f\|_{L_{\infty, r}(\log L)_b(Q_0)} \lesssim \|f\|_{B^{d/p, b+ 1/\min\{1,r\}}_{p,r} (Q_0);k} \quad b < -1/r,	\quad k > d/p,
		\end{equation}
		we derive
		\begin{equation}\label{Blowup4}
		\|f^{\#}_{Q_0}\|_{L_{\infty, r}(\log L)_b(Q_0)} \lesssim \|f\|_{B^{d/p, b+ 1/\min\{1,r\}}_{p,r} (Q_0);k}.
			\end{equation}
			This implies a loss of logarithmic smoothness of order $\frac{1}{\min\{1,r\}}$ in order to achieve that $f^{\#}_{Q_0} \in L_{\infty, r}(\log L)_b(Q_0)$.
			
			One of our goals in this paper is to
 show that the standard methods  described above, which reduce estimates for maximal functions (cf. \eqref{Blowup4}) to the Sobolev inequalities (cf. \eqref{Blowup3}), are far from being optimal and can be  considerably improved by using new extrapolation estimates based on Theorem \ref{ThmDeVoreLorentz}. These extrapolation results are interesting by their own sake and we postpone their detailed discussion to Section \ref{SectionExtrapol} below. As application of these extrapolation arguments, we are in a position to improve
 \eqref{Blowup4}.
		
		\begin{cor}\label{CorollaryLimitingBesovMax}
			Let $1 < p < \infty, 0 < q, r \leq \infty, k > d/p$, and $b < -1/r$. Then
			\begin{equation}\label{CorollaryLimitingBesovMax1New*}
			\|f^{\#}_{Q_0}\|_{L_{\infty, r} (\log L)_b (Q_0)} \lesssim \|f\|_{B^{d/p, b+1/r}_{r} L_{p,q}(Q_0);k}.
			\end{equation}
			In particular,
			\begin{equation}\label{CorollaryLimitingBesovMax1}
			\|f^{\#}_{Q_0}\|_{L_{\infty, r} (\log L)_b (Q_0)} \lesssim \|f\|_{B^{d/p, b+1/r}_{p, r}(Q_0);k}
			\end{equation}
			and if $\lambda > 0$ then
			\begin{equation*}
			\|f^{\#}_{Q_0}\|_{\emph{exp} \, L^\lambda (Q_0)} \lesssim \|f\|_{B^{d/p,-1/\lambda}_{p, \infty}(Q_0);k}.
			\end{equation*}
		\end{cor}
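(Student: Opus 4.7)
\medskip

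\noindent\textbf{Proof plan.} My plan is to reduce the corollary to the cube version of Theorem~\ref{ThmDeVoreLorentz} (namely \eqref{ThmDeVore*LorentzCubes}), and then to cash in the extrapolation machinery developed in Section~\ref{SectionExtrapol} in order to convert the $\sup$-type expression coming from that theorem into the integral-type Lorentz--Besov seminorm appearing on the right-hand side of \eqref{CorollaryLimitingBesovMax1New*}. This extrapolation step is exactly where the sharper logarithmic exponent $b+1/r$ arises, in place of the $b+1/\min\{1,r\}$ obtained by the elementary Sobolev route in \eqref{Blowup4}.

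\smallskip

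First, I would start from \eqref{ThmDeVore*LorentzCubes}. Since $(f^{\#}_{Q_0})^{*}$ is non-increasing, the integral on the left dominates $t^{d/p}(f^{\#}_{Q_0})^{*}(t^d)$ up to a constant depending only on $p,q$, so after the substitution $s=t^d$ one gets the pointwise bound
\[
(f^{\#}_{Q_0})^{*}(s)\lesssim \|f\|_{L_{p,q}(Q_0)}+\sup_{s^{1/d}<u<1}u^{-d/p}\,\omega_k(f,u)_{p,q},\qquad s\in(0,1).
\]
Raising to the $r$-th power, multiplying by $(1-\log s)^{br}\,ds/s$, and integrating over $(0,1)$ yields a bound on $\|f^{\#}_{Q_0}\|_{L_{\infty,r}(\log L)_b(Q_0)}^{r}$. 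The constant contribution from $\|f\|_{L_{p,q}(Q_0)}$ is finite thanks to the assumption $b<-1/r$, which is exactly what makes $\int_0^1(1-\log s)^{br}\,ds/s$ converge.

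\smallskip

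Changing variables $s=v^d$ in the remaining term reduces the question to estimating
\[
\Bigl(\int_0^1\bigl[(1-\log v)^{b}\sup_{v<u<1}u^{-d/p}\omega_k(f,u)_{p,q}\bigr]^{r}\,\frac{dv}{v}\Bigr)^{1/r}
\]
by the Lorentz--Besov seminorm of $f$, that is, by $\Bigl(\int_0^1[v^{-d/p}(1-\log v)^{b+1/r}\omega_k(f,v)_{p,q}]^{r}\,dv/v\Bigr)^{1/r}$, modulo a term controlled by $\|f\|_{L_{p,q}(Q_0)}$. This is precisely the extrapolation statement applied to the function $h(u)=u^{-d/p}\omega_k(f,u)_{p,q}$, whose non-increasing envelope $\sup_{v<u<1}h(u)$ is to be traded for a logarithmic weight gain of $(1-\log v)^{1/r}$. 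Assembling these steps proves \eqref{CorollaryLimitingBesovMax1New*}. The particular inequalities then follow by specialising to $q=p$ and to $r=\infty$, $b=-1/\lambda$, using the identification $\operatorname{exp}L^{\lambda}=L_{\infty,\infty}(\log L)_{-1/\lambda}$.

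\smallskip

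\noindent\emph{Expected main obstacle.} Steps~1 and~2 are routine monotonicity and change-of-variables manipulations. The entire difficulty of the corollary is concentrated in Step~3, the sup-to-integral upgrade with the sharp gain $(1-\log v)^{1/r}$ instead of $(1-\log v)^{1/\min\{1,r\}}$. A direct weighted Hardy inequality for the operator $h\mapsto \sup_{v<u<1}h(u)$ loses an extra logarithmic factor for $r>1$, and is insufficient to beat \eqref{Blowup4}; indeed for constant $h$ one already sees that the naive inequality fails when $br\in(-2,-1)$. The sharp exponent is obtained only by appealing to the limiting real-interpolation description of the Lorentz--Besov space together with the uniformity of the constants in Theorem~\ref{ThmDeVoreLorentz} with respect to~$k$, both of which are encapsulated in the extrapolation estimates of Section~\ref{SectionExtrapol}. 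Verifying that this framework is indeed applicable here---and that the $\|f\|_{L_{p,q}(Q_0)}$ term correctly absorbs the boundary contributions near $v=1$---is where the real work of the proof lies.
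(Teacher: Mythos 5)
Your plan correctly identifies the starting point (the cube version \eqref{ThmDeVore*LorentzCubes} of Theorem~\ref{ThmDeVoreLorentz}) and correctly locates the difficulty in the sup-to-integral upgrade, and you are right that the extrapolation machinery of Section~\ref{SectionExtrapol} is where the sharp exponent $b+1/r$ comes from. However, the route you outline is not actually the one the paper takes, and the step you defer to the extrapolation machinery does not match what that machinery provides. You reduce the problem to a single weighted inequality for the decreasing envelope operator $h\mapsto\sup_{v<u<1}h(u)$ with a fixed logarithmic weight $(1-\log v)^b$, and then --- having noticed that this inequality is not obtainable by a naive Hardy argument --- you appeal to ``the limiting real-interpolation description of the Lorentz--Besov space together with the uniformity of the constants in Theorem~\ref{ThmDeVoreLorentz} with respect to~$k$.'' Neither of these things is what actually resolves it. The relevant uniformity is in the \emph{smoothness deficit} $\varepsilon$ with $k$ held fixed, not in $k$; and the limiting interpolation description of $L_{\infty,r}(\log L)_b$ (Lemma~\ref{LemInterp1}) is used in Section~\ref{section5} for Theorems~\ref{ThmBDS} and~\ref{ThmGJ}, but not in the proof of this corollary.

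What the paper actually does is different in a way that matters: it never confronts a logarithmically-weighted sup-to-integral inequality at all. Starting from \eqref{ThmDeVore*LorentzCubes}, one first proves the one-parameter family of power-weighted estimates in Corollary~\ref{CorDeVoreExtrapol}, namely $\|f^{\#}_{Q_0}\|_{L_{d/\varepsilon,r}(Q_0)}\le C\,\varepsilon^{-1/r}\|f\|_{B^{d/p-\varepsilon}_r L_{p,q}(Q_0);k}$ with $C$ independent of $\varepsilon$. One then sets $\varepsilon=2^{-j}d$, multiplies both sides by $2^{jb}$, raises to the $r$-th power, and sums over $j$; a Fubini argument identifies $\sum_j 2^{jbr}\|f^{\#}_{Q_0}\|^r_{L_{2^j,r}}$ with $\|f^{\#}_{Q_0}\|^r_{L_{\infty,r}(\log L)_b}$ (using $\sum_j 2^{jbr}t^{2^{-j}r}\asymp(-\log t)^{br}$) and $\sum_j 2^{j(b+1/r)r}\|f\|^r_{B^{d(1/p-2^{-j})}_r L_{p,q};k}$ with $\|f\|^r_{B^{d/p,b+1/r}_r L_{p,q};k}$. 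So the logarithmic weight and the exponent $b+1/r$ both emerge from the dyadic summation of power-weighted Lorentz norms; at no point is $\sup_{v<u<1}h(u)$ compared to $\int$ against a logarithmic weight. Because your Step~3 is not carried out and is not implied by the tools you cite, the proposal as written has a genuine gap.
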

	 \begin{rem}
	(i) Since $B^{d/p, b+1}_{p,r}(Q_0) \subsetneq B^{d/p, b+1/r}_{p, r}(Q_0), \, r > 1$, \eqref{CorollaryLimitingBesovMax1} sharpens
   \eqref{Blowup4}.
	\\
	 (ii) We will show in Proposition \ref{CorollaryLimitingBesovMaxOptimal} below that \eqref{CorollaryLimitingBesovMax1New*} is optimal, namely
	 \begin{equation*}
	 \|f^{\#}_{Q_0}\|_{L_{\infty, r} (\log L)_b (Q_0)} \lesssim \|f\|_{B^{d/p, b+\xi}_{r} L_{p,q}(Q_0);k } \iff \xi \geq 1/r.
	 \end{equation*}
	 \end{rem}

	 Our method can  be also applied to work with Sobolev spaces. Let us state the analogue of Corollary \ref{CorollaryLimitingBesovMax}.
	
	\begin{cor}\label{CorollaryLimitingBesovMaxSecond}
	Let $1 \leq r \leq  \infty, k < d$, and $b < -1/r$. Then
			\begin{equation*}
			\|f^{\#}_{Q_0}\|_{L_{\infty, r} (\log L)_b (Q_0)} \lesssim \|f \|_{W^k L_{d/k, r} (\log L)_{b + 1/r}(Q_0)}.
			\end{equation*}
		\end{cor}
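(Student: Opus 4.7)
The strategy is to run an extrapolation argument based on Theorem~\ref{ThmDeVoreDer}, in direct parallel with the proof of Corollary~\ref{CorollaryLimitingBesovMax} (which relies on Theorem~\ref{ThmDeVoreLorentz}). Fix $p > d/(d-k)$; since $k < d$ and $r \geq 1$, this places us in case (i) of Theorem~\ref{ThmDeVoreDer}. Apply the cube version \eqref{ThmDeVoreDer<Cubes} with $q = r$, and write $F = (f^{\#}_{Q_0})^*$ and $H_l(u) = u^{k/d} |\nabla^l f|^*(u)$ for $l = 0, \ldots, k$, so that the inequality reads
\begin{equation*}
	t^{-1/p} \Big(\int_0^t (u^{1/p} F(u))^r \frac{du}{u}\Big)^{1/r} \lesssim \sum_{l=0}^k \Big[t^{-1/p} \Big(\int_0^t (u^{1/p} H_l(u))^r \frac{du}{u}\Big)^{1/r} + \sup_{t < u < 1} H_l(u)\Big].
\end{equation*}

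The next step is the extrapolation: raise both sides to the $r$-th power, multiply by $(1-\log t)^{br}$, and integrate against $dt/t$ over $(0,1)$. Fubini together with the elementary asymptotic
$$\int_u^1 (1 - \log t)^{br} t^{-r/p} \frac{dt}{t} \asymp \frac{p}{r} (1 - \log u)^{br} u^{-r/p}, \qquad u \in (0,1),$$
(valid since $br < -1$) converts the LHS and the first RHS integrand into $L_{\infty, r}(\log L)_b$-type integrals with a common prefactor $p/r$. Recognising $\int_0^1 (1-\log u)^{br} H_l(u)^r \, du/u$ as $\||\nabla^l f|\|_{L_{d/k, r}(\log L)_b(Q_0)}^r$ and dividing through by $p/r$, one reaches the master inequality
$$\|F\|_{L_{\infty, r}(\log L)_b(Q_0)}^r \lesssim \sum_{l = 0}^k \Big[ \||\nabla^l f|\|_{L_{d/k, r}(\log L)_b(Q_0)}^r + \frac{r}{p} \int_0^1 (1 - \log t)^{br} V_l(t)^r \frac{dt}{t} \Big],$$
with $V_l(t) := \sup_{t < u < 1} H_l(u)$.

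The main obstacle is absorbing the sup terms. The plan is to establish the Hardy-type bound
$$\int_0^1 (1 - \log t)^{br} V_l(t)^r \frac{dt}{t} \lesssim \||\nabla^l f|\|_{L_{d/k, r}(\log L)_{b + 1/r}(Q_0)}^r,$$
whose characteristic $1/r$ shift of the log exponent arises from passing between a pointwise supremum and an $L^r$ average---the hallmark of the limiting-interpolation methods of Section~1.3. The essential structural input is that $H_l(u) = u^{k/d} G_l(u)$ is the product of the increasing factor $u^{k/d}$ and the non-increasing rearrangement $G_l = |\nabla^l f|^*$; this monotonicity of $G_l$ is indispensable, as the inequality would fail for a generic $H_l$ such as $\chi_{(1/2,1)}$. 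Combined with the trivial embedding $L_{d/k, r}(\log L)_{b + 1/r}(Q_0) \hookrightarrow L_{d/k, r}(\log L)_b(Q_0)$ applied to the first summand, this yields
$$\|f^{\#}_{Q_0}\|_{L_{\infty, r}(\log L)_b(Q_0)} \lesssim \|f\|_{W^k L_{d/k, r}(\log L)_{b + 1/r}(Q_0)},$$
as required; the case $r = \infty$ is obtained by the usual replacement of integrals by suprema.
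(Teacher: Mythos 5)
Your argument is correct, and it takes a genuinely different (though closely related) route from the paper. The paper first establishes Corollary~\ref{CorDeVoreDerExtrapol}, i.e.\ the quantitative blow-up estimate $\|f^{\#}_{Q_0}\|_{L_{d/\varepsilon, r}(Q_0)} \leq C \varepsilon^{-1/r} \|f\|_{W^k L_{d/(k+\varepsilon), r}(Q_0)}$, then sets $\varepsilon = 2^{-j}d$, sums the $r$-th powers with weights $2^{j(b+1/r)r}$, and identifies the resulting extrapolation series with the Lorentz--Zygmund norms via $\sum_j 2^{jA}t^{B/2^j}\asymp(-\log t)^A$. You instead \emph{fix} a single $p>d/(d-k)$ in \eqref{ThmDeVoreDer<Cubes}, integrate the $r$-th power of both sides against the log weight $(1-\log t)^{br}\,dt/t$ directly, and absorb the $\sup$ term by a Hardy bound. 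This is a one-pass ``continuous'' extrapolation which bypasses the intermediate corollary entirely; the $1/r$ shift in the logarithmic exponent that you extract in the Hardy step is exactly the shadow of the $\varepsilon^{-1/r}$ blow-up that the paper tracks in the discrete setting. Both arguments rely on the same three ingredients -- Theorem~\ref{ThmDeVoreDer}, Fubini, and the Hardy-type bound for $V_l(t)=\sup_{t<u<1}u^{k/d}|\nabla^l f|^*(u)$ -- and you correctly flag that the monotonicity of $|\nabla^l f|^*$ is what makes that last bound viable: for non-increasing $G$ one has $V(t)\lesssim\bigl(\int_{t/2}^1(u^{k/d}G(u))^r\,du/u\bigr)^{1/r}$ (average over $[u_*/2,u_*]$ near the point where the sup is attained), after which Fubini and $\int_0^{2v}(1-\log t)^{br}\,dt/t\asymp(1-\log v)^{br+1}$ (using $br<-1$) give the claimed $1/r$-shifted Lorentz--Zygmund norm. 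One small caveat: the asymptotic $\int_u^1(1-\log t)^{br}t^{-r/p}\,dt/t\asymp\frac{p}{r}(1-\log u)^{br}u^{-r/p}$ is two-sided only for $u$ small; for the left-hand side of the master inequality one direction of the estimate is all that's needed there, and the tail $\int_{1/2}^1$ is easily controlled by the monotonicity of $(f^{\#}_{Q_0})^*$, so this does not affect the conclusion. Overall your route is arguably more streamlined, whereas the paper's is more modular (the intermediate Corollary~\ref{CorDeVoreDerExtrapol} is of independent interest and is what makes the optimality proof in Proposition~\ref{CorDeVoreDerExtrapolOptim} go through); the approaches buy essentially the same thing.
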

		
		\begin{rem}
		(i) In Proposition \ref{CorollaryLimitingBesovMaxSecondOptimal} below, we will establish the optimality of the previous inequality, i.e.,
	 \begin{equation*}
	 	\|f^{\#}_{Q_0}\|_{L_{\infty, r} (\log L)_b (Q_0)} \lesssim \|f \|_{W^k L_{d/k, r} (\log L)_{b + \xi}(Q_0)} \iff \xi \geq 1/r.
	 \end{equation*}
(ii) Comparing Corollaries \ref{CorollaryLimitingBesovMax} and \ref{CorollaryLimitingBesovMaxSecond}, we note that to the best of our knowledge the relationship  between the spaces $W^k L_{d/k,r}(\log L)_{b+1/r}(Q_0)$ and $B^{d/p,b+1/r}_{r} L_{p,q}(Q_0)$ is not known. See  \cite{SeegerTrebels}.
		\end{rem}

\subsection{Extrapolation results}\label{SectionExtrapol}
A natural question in harmonic analysis is to look for sharp bounds of the norms of classical operators in terms of some of the involved parameters (integrability, smoothness, $A_p$ characteristic of weights, etc). This question is not only interesting by itself (cf. \cite{HytonenPerez}) but it is also useful to establish borderline estimates via extrapolation methods (cf. \cite{JawerthMilman}). 

The aim of this section is to apply the pointwise estimates obtained in Section \ref{SectionEstimSmooth} to derive several sharp estimates involving integrability properties of $f^{\#}$. As anticipated above, these estimates will be essential in the proofs of Corollaries \ref{CorollaryLimitingBesovMax} and \ref{CorollaryLimitingBesovMaxSecond}.

According to \eqref{Blowupnew}, we have that
\begin{equation}\label{Blowup2}
			\|f^{\#}\|_{L_{d/\varepsilon, r}(\R^d)} \leq C(\varepsilon) \|f\|_{\dot{B}^{d/p-\varepsilon}_{p, r}(\R^d);k}, \quad \varepsilon \to 0+.
		\end{equation}
	Invoking now Theorem \ref{ThmDeVoreLorentz} we are able to determine the exact behaviour of the constant $C(\varepsilon)$.

	\begin{cor}\label{CorDeVoreExtrapol}
		Let $1 < p < \infty, 0 < q, r \leq \infty,$ and $k \in \N$. Then
	\begin{equation}\label{CorDeVoreExtrapol1}
		\|f^{\#}\|_{L_{d/\varepsilon, r}(\R^d)} \leq C_0 \,  \varepsilon^{-1/r} \|f\|_{\dot{B}^{d/p-\varepsilon}_r L_{p, q}(\R^d); k}, \qquad \varepsilon \to 0+, \qquad k > d/p,
	\end{equation}
	and
	\begin{equation}\label{CorDeVoreExtrapol2}
		\|f^{\#}\|_{L_{d/\varepsilon, r}(\R^d)} \leq C_1 \|f\|_{\dot{B}^{d/p-\varepsilon}_r L_{p, q}(\R^d); k}, \qquad \varepsilon \to 0+, \qquad k = d/p,
	\end{equation}
	where $C_0$ and $C_1$ are positive constants which do not depend on $f$ and $\varepsilon$. The corresponding estimates for cubes read as follows
		\begin{equation}\label{CorDeVoreExtrapol1cube}
		\|f^{\#}_{Q_0}\|_{L_{d/\varepsilon, r}(Q_0)} \leq C_2 \,  \varepsilon^{-1/r} \|f\|_{B^{d/p-\varepsilon}_r L_{p, q}(Q_0); k}, \qquad \varepsilon \to 0+, \qquad k \geq d/p,
	\end{equation}
	where $C_2$ does not depend on $f$ and $\varepsilon$.
	\end{cor}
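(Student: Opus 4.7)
The plan is to derive Corollary \ref{CorDeVoreExtrapol} by applying Theorem \ref{ThmDeVoreLorentz} pointwise and then performing a Hardy-type averaging that produces the factor $\varepsilon^{-1/r}$. For $k > d/p$, combining \eqref{ThmDeVore*Lorentz} with the elementary lower bound $\bigl(\int_0^s (u^{1/p} f^{\#*}(u))^q \tfrac{du}{u}\bigr)^{1/q} \gtrsim s^{1/p} f^{\#*}(s)$ (valid by monotonicity of $f^{\#*}$ on $(s/2,s)$) yields the pointwise estimate
\begin{equation*}
f^{\#*}(s) \lesssim \Phi(s^{1/d}), \qquad \Phi(t) := \sup_{v > t} v^{-d/p} \omega_k(f,v)_{p,q}.
\end{equation*}
Expanding the target quasi-norm and substituting $s = t^d$, \eqref{CorDeVoreExtrapol1} reduces to the Hardy-type bound
\begin{equation*}
\int_0^\infty (t^\varepsilon \Phi(t))^r \, \frac{dt}{t} \lesssim \varepsilon^{-1} \int_0^\infty (v^\varepsilon \psi(v))^r \, \frac{dv}{v}, \qquad \psi(v) := v^{-d/p} \omega_k(f,v)_{p,q}.
\end{equation*}

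To prove this bound I would discretize via dyadic intervals $[2^n, 2^{n+1})$. Since $\omega_k(f,\cdot)_{p,q}$ is non-decreasing with $\omega_k(f,2v)_{p,q} \leq 2^k \omega_k(f,v)_{p,q}$, both integrals become (uniformly in $\varepsilon$) comparable to $\sum_n 2^{n\varepsilon r} A_n^r$ and $\sum_m 2^{m\varepsilon r} a_m^r$ respectively, with $a_n := 2^{-nd/p}\omega_k(f,2^n)_{p,q}$ and $A_n := \sup_{m \geq n} a_m$. Enumerating the right-to-left ``record'' indices $\cdots > n_1 > n_0$ at which the running supremum strictly increases, one has $A_n = a_{n_j}$ for $n \in (n_{j+1}, n_j]$; summing the geometric series $\sum_{n=n_{j+1}+1}^{n_j} 2^{n\varepsilon r} \lesssim (\varepsilon r)^{-1} 2^{n_j \varepsilon r}$ gives
\begin{equation*}
\sum_n 2^{n\varepsilon r} A_n^r \lesssim \frac{1}{\varepsilon r} \sum_j 2^{n_j \varepsilon r} a_{n_j}^r \leq \frac{1}{\varepsilon r} \sum_m 2^{m\varepsilon r} a_m^r,
\end{equation*}
and taking $r$-th roots produces the required factor $\varepsilon^{-1/r}$. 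The critical case $k = d/p$ in \eqref{CorDeVoreExtrapol2} is easier: the sharper inequality \eqref{ThmDeVore*Lorentz2} gives the supremum-free pointwise bound $f^{\#*}(s) \lesssim s^{-1/p} \omega_k(f, s^{1/d})_{p,q}$, and a single change of variables identifies $\|f^{\#}\|_{L_{d/\varepsilon,r}(\R^d)}^r$ with $\|f\|_{\dot{B}^{d/p-\varepsilon}_r L_{p,q}(\R^d);k}^r$ up to a constant independent of $\varepsilon$.

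The cube inequality \eqref{CorDeVoreExtrapol1cube} follows by the same strategy using the local versions \eqref{ThmDeVore*LorentzCubes} (if $k > d/p$) or \eqref{ThmDeVore*Lorentz2Cubes} (if $k = d/p$). The additional term $\|f\|_{L_{p,q}(Q_0)}$ on the right-hand sides of those estimates contributes a summand $\asymp \varepsilon^{-1/r} \|f\|_{L_{p,q}(Q_0)}$ (coming from $\int_0^1 s^{\varepsilon r/d} \tfrac{ds}{s} \asymp (\varepsilon r)^{-1}$), which is absorbed into the inhomogeneous quasi-norm $\|f\|_{B^{d/p-\varepsilon}_r L_{p,q}(Q_0);k}$. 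The main technical obstacle is precisely the Hardy step: a naive bound like $t^\varepsilon \Phi(t) \leq \sup_{v > t} v^\varepsilon \psi(v)$ would make the outer integral diverge at $t \to 0+$ and destroy any quantitative gain in $\varepsilon$, whereas the dyadic record-index decomposition above exploits the structural fact that $\omega_k$ is ``essentially dyadic'' (constant up to factor $2^k$ on dyadic intervals) to extract exactly the correct weight $(\varepsilon r)^{-1}$ from the running supremum.
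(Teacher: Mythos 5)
Your proposal is correct and follows the same overall strategy as the paper's proof: use the pointwise monotonicity bound $s^{1/p} f^{\#*}(s) \lesssim \bigl(\int_0^{s} (u^{1/p}f^{\#*}(u))^q \tfrac{du}{u}\bigr)^{1/q}$ to convert Theorem \ref{ThmDeVoreLorentz} into a pointwise estimate for $f^{\#*}$, then extract the $\varepsilon^{-1/r}$ gain by a Hardy-type argument. Where you differ is in how that Hardy step is carried out. The paper dominates the tail supremum $\sup_{t<u<1} u^{-d/p}\omega_k(f,u)_{p,q}$ by the $L_r$ tail average $\bigl(\int_t^1 (u^{-d/p}\omega_k(f,u)_{p,q})^r \tfrac{du}{u}\bigr)^{1/r}$ (which is valid because $u^{-d/p}\omega_k(f,u)_{p,q}$ is doubling) and then applies Fubini, yielding $\int_0^u t^{\varepsilon r}\tfrac{dt}{t} = (\varepsilon r)^{-1} u^{\varepsilon r}$. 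You instead discretize dyadically and use the record-index decomposition of the running supremum, extracting $(\varepsilon r)^{-1}$ from the geometric series between consecutive records. Both routes exploit exactly the same structural fact — that $\omega_k$ is non-decreasing and doubling, so $v^{-d/p}\omega_k(f,v)_{p,q}$ is comparable across dyadic intervals — and, as you correctly flag, this is essential: the naive ``Hardy for suprema'' inequality $\int_0^\infty (t^\varepsilon \sup_{v>t}\psi(v))^r\tfrac{dt}{t} \lesssim \varepsilon^{-1}\int_0^\infty (v^\varepsilon\psi(v))^r\tfrac{dv}{v}$ is false for general $\psi$ when $r<\infty$ (consider $\psi$ concentrated near a point). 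Your record-index argument is a bit more explicit and self-contained than the paper's ``replace sup by $L_r$ average''; the paper's version is shorter once one grants that domination. One small notational glitch: your enumeration ``$\cdots > n_1 > n_0$'' followed by the indexing convention $A_n = a_{n_j}$ for $n\in(n_{j+1},n_j]$ requires the $n_j$ to be decreasing in $j$, i.e.\ $n_0 > n_1 > \cdots$; also, since $A_n\to 0$ as $n\to+\infty$, there are in general infinitely many records extending to $+\infty$ as well, so the indexing has to cover that end too. These are cosmetic and the geometric-series estimate goes through unchanged. Finally, for the cube case you correctly account for the extra term $\|f\|_{L_{p,q}(Q_0)}$, which is exactly why the paper's cube bound \eqref{CorDeVoreExtrapol1cube} carries $\varepsilon^{-1/r}$ even when $k=d/p$, unlike the homogeneous estimate \eqref{CorDeVoreExtrapol2}.
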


\begin{rem}	
	(i) The previous result tells us that the asymptotic behaviour of $C(\varepsilon)$ in \eqref{Blowup2} strongly depends on the order $k$ of the fixed Besov (semi-)norm. On the one hand, in the limiting case $k=d/p$ by \eqref{CorDeVoreExtrapol2}, $C(\varepsilon) = O(1)$. This corresponds to the fact that if $\varepsilon \to 0+$ then we approach the  spaces $L_{\infty,r}(\R^d)$ and
		\begin{equation*}
			\Big\{f \in C^\infty_0(\R^d) : \int_0^\infty (t^{-k} \omega_k(f,t)_{p,q})^r \frac{dt}{t} < \infty \Big\},
		\end{equation*}
		which are both  trivial when $r < \infty$; while if $r=\infty$ one can easily check that  inequality \eqref{CorDeVoreExtrapol2} can be rewritten as $\|f\|_{\text{BMO}(\R^d)} \lesssim \||\nabla^k f |\|_{L_{d/k, q}(\R^d) }$ (cf. \eqref{LemmaEmbBMO}). A similar comment also applies to \eqref{CorDeVoreExtrapol1} (i.e., $k > d/p$) with $r= \infty$ (cf. \eqref{LemmaEmbBMOLorentz}.)

On the other hand, a completely different phenomenon arises when $k > d/p$ and $r < \infty$ (see \eqref{CorDeVoreExtrapol1}). In this case, the blow up $C(\varepsilon) = O (\varepsilon^{-1/r})$ is in fact sharp since
(see Proposition \ref{CorDeVoreExtrapolSharp} below) 
		\begin{equation*}
		\|f^{\#}_{Q_0}\|_{L_{d/\varepsilon, r}(Q_0)} \leq C_0 \,  \varepsilon^{-\xi} \|f\|_{B^{d/p-\varepsilon}_r L_{p, q}(Q_0); k} \iff \xi \geq 1/r.
	\end{equation*}	
	(ii) Estimate \eqref{CorDeVoreExtrapol1cube} is formulated in terms of the inhomogeneous  Besov norms rather than their homogeneous counterparts (see \eqref{CorDeVoreExtrapol1} and \eqref{CorDeVoreExtrapol2}). This modification is indeed necessary to obtain meaningful estimates.
Moreover, the limiting case $k=d/p$ shows another distinction between Besov norms on $\R^d$ and cubes. Specifically, inequality \eqref{CorDeVoreExtrapol1cube} with $k=d/p$ is considerably worse than \eqref{CorDeVoreExtrapol2}.
		\end{rem}
	
	The counterpart of Corollary \ref{CorDeVoreExtrapol} for Sobolev spaces reads as follows.
	
		\begin{cor}\label{CorDeVoreDerExtrapol}
		Let $1 \leq r \leq \infty$ and $k < d$. Then
			\begin{equation}\label{CorDeVoreDerExtrapol1}
		\|f^{\#}\|_{L_{d/\varepsilon, r}(\R^d)} \leq C_0 \,  \varepsilon^{-1/r} \|f \|_{\dot{W}^k L_{d/(k+\varepsilon), r} (\R^d)}, \qquad \varepsilon \to 0+,
	\end{equation}
	and
	\begin{equation}\label{CorDeVoreDerExtrapol1Cubes}
		\|f^{\#}_{Q_0}\|_{L_{d/\varepsilon, r}(Q_0)} \leq C_1 \,  \varepsilon^{-1/r} \|f \|_{W^k L_{d/(k+\varepsilon), r} (Q_0)}, \qquad \varepsilon \to 0+,
	\end{equation}
	where $C_0$ and $C_1$ are positive constants which do not depend on $f$ and $\varepsilon$.
	\end{cor}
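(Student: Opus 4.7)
My plan is to derive \eqref{CorDeVoreDerExtrapol1} and \eqref{CorDeVoreDerExtrapol1Cubes} directly from Theorem~\ref{ThmDeVoreDer} by choosing parameters so that the auxiliary index $dp/(d+kp)$ coincides with $d/(k+\varepsilon)$, and then passing to the appropriate limit in $t$. For \eqref{CorDeVoreDerExtrapol1}, I would set $p = d/\varepsilon$ and $q = r$ in Theorem~\ref{ThmDeVoreDer}(i); the structural condition $k < d(1-1/p) = d - \varepsilon$ holds automatically for $\varepsilon < d - k$, which is the case as $\varepsilon \to 0+$ since $k < d$.

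Writing $g = |\nabla^k f|$, multiplying by $t^{\varepsilon/d}$ converts the conclusion of Theorem~\ref{ThmDeVoreDer}(i) into
\[
\Big(\int_0^t (u^{\varepsilon/d} f^{\#*}(u))^r \frac{du}{u}\Big)^{1/r} \lesssim \Big(\int_0^t (u^{(k+\varepsilon)/d} g^*(u))^r \frac{du}{u}\Big)^{1/r} + t^{\varepsilon/d} \sup_{u > t} u^{k/d} g^*(u).
\]
The last term is handled via the factorization $u^{k/d} g^*(u) = u^{-\varepsilon/d} \cdot u^{(k+\varepsilon)/d} g^*(u)$, which yields $t^{\varepsilon/d} \sup_{u > t} u^{k/d} g^*(u) \leq \sup_{u > t} u^{(k+\varepsilon)/d} g^*(u)$. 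For $r < \infty$ this tends to zero as $t \to \infty$ by a dyadic comparison on $g^*$ (finiteness of $\int_0^\infty (u^{(k+\varepsilon)/d} g^*(u))^r du/u$ forces $u^{(k+\varepsilon)/d} g^*(u)\to 0$), while for $r = \infty$ it is dominated by $\|g\|_{L_{d/(k+\varepsilon), \infty}}$ and absorbed into the Lorentz norm on the right. Letting $t \to \infty$ then produces the desired estimate with $\|f^{\#}\|_{L_{d/\varepsilon, r}(\R^d)}$ on the left and $\|g\|_{L_{d/(k+\varepsilon), r}(\R^d)}$ on the right.

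The explicit factor $\varepsilon^{-1/r}$ is supplied by the implicit constant in Theorem~\ref{ThmDeVoreDer}(i) as $p = d/\varepsilon \to \infty$: the Hardy-type estimates used in its proof contribute a factor of order $(1/\varepsilon)^{1/r}$, exactly as the analogous blow-up arises in the Besov extrapolation of Corollary~\ref{CorDeVoreExtrapol} (the same $\varepsilon^{-1/r}$ that appears there has the same origin here).

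The cube inequality \eqref{CorDeVoreDerExtrapol1Cubes} is proved in the same manner, applying \eqref{ThmDeVoreDer<Cubes} with $p = d/\varepsilon$, $q = r$ and sending $t \to 1^-$; the extra finite sum over $l = 0, 1, \ldots, k$ produces Lorentz norms of $|\nabla^l f|$, whose sum is precisely $\|f\|_{W^k L_{d/(k+\varepsilon), r}(Q_0)}$, while on a cube of finite measure the sup-term already vanishes at the endpoint. The principal obstacle throughout is the precise tracking of the $\varepsilon^{-1/r}$ dependence in Theorem~\ref{ThmDeVoreDer} as $p \to \infty$, since this is where the extrapolation factor enters; the remaining steps are routine manipulations of truncated Lorentz norms.
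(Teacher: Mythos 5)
Your strategy is genuinely different from the paper's and, unfortunately, it does not close. You take $p = d/\varepsilon$ (so $p$ diverges as $\varepsilon \to 0$) in Theorem~\ref{ThmDeVoreDer} and then send $t \to \infty$ (resp.\ $t \to 1^-$) to read off the global Lorentz norms. The structural matching of indices is indeed correct ($dp/(d+kp) = d/(k+\varepsilon)$), and the treatment of the sup term as $t \to \infty$ is fine. But the entire burden of producing the $\varepsilon^{-1/r}$ factor is then shifted onto a claim that the implicit constant in Theorem~\ref{ThmDeVoreDer}(i) behaves like $\varepsilon^{-1/r}$ as $p = d/\varepsilon \to \infty$. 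This is asserted, not proved, and it is not contained in the statement of Theorem~\ref{ThmDeVoreDer}, which is a qualitative result with an unspecified constant. Establishing the precise rate would require reworking the proof of Theorem~\ref{ThmDeVoreDer} (Sobolev embedding $\dot{W}^k L_{r,q} \hookrightarrow L_{p,q}$, the $K$-functional identifications \eqref{ProofThmDeVoreLorentz2}, \eqref{ThmDeVoreDer3}, Holmstedt's reiteration) with explicit constants, and this is not a routine bookkeeping exercise --- it is the heart of the matter. Since Proposition~\ref{CorDeVoreDerExtrapolOptim} shows the constant must blow up, the issue is not whether it diverges but at what rate, and that is exactly what is missing.

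The paper's proof avoids this difficulty entirely: it fixes a single $p > d/(d-k)$ \emph{independent of $\varepsilon$}, so the constant in Theorem~\ref{ThmDeVoreDer} is $\varepsilon$-uniform, then integrates the pointwise inequality over $t \in (0,1)$ against the weight $t^{\varepsilon r/d}\,dt/t$ and applies Fubini. The $\varepsilon^{-1/r}$ arises \emph{explicitly} from the elementary integral $\int_0^u t^{\varepsilon r/d}\,dt/t = \tfrac{d}{\varepsilon r}\, u^{\varepsilon r/d}$ applied to the sup term, while the other term contributes a bounded constant because $\varepsilon/d - 1/p$ stays bounded away from $0$. (The same mechanism produces the $\varepsilon^{-1/r}$ in Corollary~\ref{CorDeVoreExtrapol}, so your remark that its factor has ``the same origin'' as a Hardy constant blow-up is also not quite right --- there, too, it is an explicit $\int_0^1 t^{\varepsilon r}\,dt/t$.) If you want to keep your route you would have to supply a quantitative version of Theorem~\ref{ThmDeVoreDer} with tracked $p$-dependence; otherwise, switching to the fixed-$p$ plus integration argument is simpler and self-contained.
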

	
	\begin{rem}
	(i) The assumption $k < d$ guarantees that the right-hand sides of \eqref{CorDeVoreDerExtrapol1} and \eqref{CorDeVoreDerExtrapol1Cubes} are well defined for small enough $\varepsilon$.
\\	
	(ii) Letting  $C(\varepsilon) = \sup_{\|f\|_{W^k L_{d/(k+\varepsilon), r} (Q_0)} \leq 1} \|f^{\#}_{Q_0}\|_{L_{d/\varepsilon, r}(Q_0)}$ and $r < \infty$, the fact that  $C(\varepsilon) \to \infty$ as $\varepsilon \to 0+$ is clear since $L_{\infty, r}(Q_0)$ becomes the trivial space. The novelty of Corollary \ref{CorDeVoreDerExtrapol} is to establish the blow-up $C(\varepsilon) = O (\varepsilon^{-1/r})$. Furthermore, by Proposition \ref{CorDeVoreDerExtrapolOptim} below this estimate is optimal, that is, 
	\begin{equation*}
		\|f^{\#}_{Q_0}\|_{L_{d/\varepsilon, r}(Q_0)} \leq C \,  \varepsilon^{-\xi} \|f\|_{W^k L_{d/(k+\varepsilon), r} (Q_0)} \iff \xi \geq 1/r.
	\end{equation*}
	 On the other hand, note that $C(\varepsilon)$ is uniformly bounded  if $r=\infty$ (see \eqref{CorDeVoreDerExtrapol1}). This corresponds to the fact that $\|f\|_{\text{BMO}(Q_0)} \lesssim \|f\|_{W^k L_{d/k,\infty}(Q_0)}$ (cf. \eqref{LemmaEmbBMO} and \eqref{0030030} below.)
	\end{rem}

\bigskip
\section{Proofs of Theorems \ref{ThmBDS} and \ref{ThmGJ}}\label{section5}

Proofs of Theorems \ref{ThmBDS} and \ref{ThmGJ} are based on a combination of known Fourier analytic tools such as the John--Nirenberg inequality and the Garnett--Jones theorem on $\text{BMO}(Q_0)$ as well as  new arguments involving  limiting interpolation techniques (see \eqref{DefLimInt}) and Holmstedt's reiteration formulas. In particular, the following charac\-te\-ri\-za\-tion of the Lorentz--Zygmund space $L_{\infty,q} (\log L)_{b}(Q_0)$ as a limiting interpolation space will be useful.

\begin{lem}\label{LemInterp1}
Let $0 < p < \infty, 0 < q, r \leq \infty,-\infty < c < \infty$, and $b < -1/q \, (b \leq 0 \text{ if } q=\infty)$. Then we have
	\begin{equation*}
		 L_{\infty,q} (\log L)_{b}(Q_0,\mu) = (L_{p,r}(\log L)_c(Q_0,\mu), L_\infty(Q_0,\mu))_{(1,b),q}
	\end{equation*}
	with equivalent quasi-norms. The corresponding result for periodic spaces also holds true.
\end{lem}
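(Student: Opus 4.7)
The strategy is to identify the right-hand side with $L_{\infty,q}(\log L)_b(Q_0,\mu)$ by first computing the Peetre $K$-functional for the ordered pair $(L_{p,r}(\log L)_c(Q_0,\mu), L_\infty(Q_0,\mu))$, then substituting into the limiting interpolation norm \eqref{DefLimInt}. The pair is ordered because $\mu(Q_0)<\infty$ forces $L_\infty(Q_0,\mu) \hookrightarrow L_{p,r}(\log L)_c(Q_0,\mu)$, so the restricted form of \eqref{DefLimInt} applies.

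The first step is to establish the Holmstedt-type formula
\begin{equation*}
K(t, f; L_{p,r}(\log L)_c, L_\infty) \asymp \bigg(\int_0^{\psi(t)} \big(u^{1/p}(1+|\log u|)^c f^*_\mu(u)\big)^r \, \frac{du}{u}\bigg)^{1/r},
\end{equation*}
where $\psi$ is implicitly defined by $\psi(t)^{1/p}(1+|\log \psi(t)|)^c \asymp t$; in particular $\psi$ is increasing, $\psi(t)\to 0$ as $t\to 0^+$, and $1+|\log\psi(t)| \asymp 1+|\log t|$. The upper bound is obtained from the truncation decomposition $f=f_0+f_1$ with $f_1 = \operatorname{sign}(f)\min(|f|, f^*_\mu(\psi(t)))$, for which $\|f_1\|_{L_\infty} = f^*_\mu(\psi(t))$ and $(f_0)^*_\mu(u) = (f^*_\mu(u)-f^*_\mu(\psi(t)))_+$ is supported on $[0,\psi(t)]$. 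The lower bound follows from any admissible decomposition together with the quasi-monotonicity of $u\mapsto u^{1/p}(1+|\log u|)^c$ for small $u$.

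Next, I insert this expression into \eqref{DefLimInt}, change variables via $s=\psi(t)$, and apply a weighted Hardy inequality with logarithmic weights (see \cite{GogatishviliOpicTrebels,EvansOpicPick}) to swap the iterated integrals. Using the asymptotics $t\asymp s^{1/p}(1+|\log s|)^c$ and $1+|\log t|\asymp 1+|\log s|$, the $s$-primitive of the outer weight produces a factor that cancels the inner weight $u^{r/p}(1+|\log u|)^{rc}$, yielding
\begin{equation*}
\bigg(\int_0^{\mu(Q_0)} \big((1+|\log u|)^b f^*_\mu(u)\big)^q \, \frac{du}{u}\bigg)^{1/q} \asymp \|f\|_{L_{\infty,q}(\log L)_b(Q_0,\mu)}.
\end{equation*}
The hypothesis $b<-1/q$ (respectively $b\leq 0$ if $q=\infty$) is exactly what ensures convergence of the relevant weighted integrals near $t=0^+$, and it matches the non-triviality condition for both sides.

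The main technical hurdle is the Hardy reiteration step when $q\neq r$, in particular when $\min(q,r)<1$ where one must work in the quasi-Banach regime; logarithmically weighted Hardy inequalities of the required form are by now standard in the theory of limiting real interpolation and are collected in \cite{EvansOpic,EvansOpicPick,GogatishviliOpicTrebels,DominguezHaroskeTikhonov}. The case $q=\infty$ is handled by the usual sup-replacement throughout, and the periodic statement follows by an identical argument applied to the finite Lebesgue measure space $\mathbb{T}$.
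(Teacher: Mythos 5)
Your proof is correct, but it takes a genuinely different route from the paper's. The paper proves the identity first in the case $c=0$, where the classical Holmstedt formula \cite[Theorem 4.2]{Holmstedt} gives $K(t,f;L_{p,r},L_\infty)\asymp\bigl(\int_0^{t^p}(u^{1/p}f_\mu^*(u))^r\,\frac{du}{u}\bigr)^{1/r}$; it then inserts this into \eqref{DefLimInt} and uses Hardy's inequality with power-log weights plus monotonicity of $f_\mu^*$. For general $c$ it does not compute the $K$-functional at all: it observes that $L_{p_0}(Q_0,\mu)\hookrightarrow L_{p,r}(\log L)_c(Q_0,\mu)\hookrightarrow L_{p_1}(Q_0,\mu)$ for $p_1<p<p_0$, which sandwiches the right-hand interpolation space between two limiting interpolation spaces that are both equal to $L_{\infty,q}(\log L)_b$ by the $c=0$ case, and concludes. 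Your argument instead establishes a Holmstedt-type formula directly for the pair $(L_{p,r}(\log L)_c,L_\infty)$ via the truncation decomposition, changes variables $s=\psi(t)$ using $1+|\log\psi(t)|\asymp 1+|\log t|$, and applies a weighted Hardy inequality to collapse the iterated integral. This is more self-contained and makes the cancellation of the log-weights explicit, at the cost of requiring the generalized Holmstedt formula (which, while standard, is heavier machinery than the sandwich trick); the paper's reduction-to-$c=0$ is strictly more elementary but less transparent about why the parameters $p,r,c$ disappear from the answer. Both approaches hinge on the same two ingredients — a $K$-functional formula for a Lorentz-type pair with $L_\infty$ and a Hardy inequality — so the difference is essentially about where the logarithmic weight $c$ gets absorbed. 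One small point to tighten: the truncation gives $\|f_1\|_{L_\infty}\le f_\mu^*(\psi(t))$, not equality, but this suffices for the upper bound.
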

\begin{proof}
	Assume first that $c=0$. Inserting the well-known estimate
	\begin{equation}\label{KFunctLorentz}
		K(t, f; L_{p,r}(Q_0,\mu), L_\infty(Q_0,\mu)) \asymp \left(\int_0^{t^p} (u^{1/p} f_\mu^*(u))^r \frac{du}{u} \right)^{1/r}
	\end{equation}
	(see \cite[Theorem 4.2]{Holmstedt}) into the definition of the limiting interpolation space, we obtain
	\begin{align*}
		\|f\|_{(L_{p,r}(Q_0,\mu), L_\infty(Q_0,\mu))_{(1,b),q}} & \asymp \left(\int_0^1 t^{-q/p} (1-\log t)^{b q} \left(\int_0^{t} (u^{1/p} f_\mu^*(u))^r \frac{du}{u} \right)^{q/r} \frac{dt}{t} \right)^{1/q} \\
		& \asymp \left( \int_0^1 ((1-\log t)^b f_\mu^*(t))^q \frac{dt}{t}\right)^{1/q} = \|f\|_{L_{\infty,q}(\log L)_b(
		Q_0,\mu)},
	\end{align*}
	where the last equivalence follows from Hardy's inequality \cite[Theorem 6.4]{BennettRudnick} and the monotonicity of $f_\mu^*$.
	
	The general case ($c \in \R$) can be reduced to the previous one via the trivial embeddings
	\begin{equation*}
		L_{p_0}(Q_0,\mu) \hookrightarrow L_{p,r}(\log L)_c(Q_0,\mu) \hookrightarrow L_{p_1}(Q_0,\mu), \quad 0 < p_1 < p < p_0 < \infty.
	\end{equation*}
\end{proof}


\begin{lem}[\cite{EvansOpicPick}]\label{LemInterp2}
	 Let $(A_0,A_1)$ be a quasi-Banach pair with $A_1 \hookrightarrow A_0$. Let $K(t,f) = K(t, f; A_0, A_1), \, 0 < t < 1$. If $0 < q \leq \infty$ and $b < -1/q \, (b \leq 0 \text{ if } q= \infty)$, then
	\begin{align}
	K(t(1-\log t)^{-b-1/q}, f; A_0, (A_0, A_1)_{(1,b),q}) & \nonumber\\
	&\hspace{-5cm} \asymp K(t,f) + t (1-\log t)^{-b-1/q} \Big(\int_t^1 (u^{-1} (1 - \log u)^b K(u,f))^q \frac{du}{u} \Big)^{1/q} \label{LemInterp2.1}
	\end{align}
	and
	\begin{equation}\label{LemInterp2.2}
	K((1-\log t)^{b+1/q}, f; (A_0, A_1)_{(1,b),q}, A_1) \asymp  \Big(\int^t_0 (u^{-1} (1 - \log u)^b K(u,f))^q \frac{du}{u} \Big)^{1/q}
	\end{equation}
	(with the usual modifications if $q=\infty$).
\end{lem}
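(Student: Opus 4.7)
The proof mirrors the classical Holmstedt reiteration theorem, adapted to the limiting interpolation scale $(A_0,A_1)_{(1,b),q}$. The central observation is that the two rescaling functions $\phi(t) := t(1-\log t)^{-b-1/q}$ and $\psi(t) := (1-\log t)^{b+1/q}$ appearing in \eqref{LemInterp2.1} and \eqref{LemInterp2.2} are chosen precisely so that the relevant tail/head integrals of the log-weight $(1-\log s)^{bq}$ balance the $K$-functional; the condition $b < -1/q$ guarantees $\int_0^{1/e}(1-\log s)^{bq}\frac{ds}{s} < \infty$, which makes the head integral $\int_0^t (1-\log s)^{bq}\frac{ds}{s}$ behave like $(1-\log t)^{bq+1}$, and this is what drives every estimate below. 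Throughout, write $B := (A_0,A_1)_{(1,b),q}$ and use that $K(s,g)/s$ is non-increasing in $s$ and $K(s,g)$ is non-decreasing.

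For \eqref{LemInterp2.1} I would prove matching bounds. For the upper bound, fix $t \in (0,1)$ and select a near-optimal decomposition $f = f_0^t+f_1^t$ with $\|f_0^t\|_{A_0} + t\|f_1^t\|_{A_1} \leq 2K(t,f)$. Then $K(s,f_1^t) \leq \min(\|f_0^t\|_{A_0}+K(s,f),\, s\|f_1^t\|_{A_1})$, and splitting the defining integral for $\|f_1^t\|_B$ at $s=t$ yields on the piece $s>t$ the tail integral appearing in \eqref{LemInterp2.1}, while on $s<t$ the pointwise bound $K(s,f_1^t)\leq sK(t,f)/t$ together with $\int_0^t (1-\log s)^{bq}\frac{ds}{s} \asymp (1-\log t)^{bq+1}$ gives a contribution $\asymp K(t,f)\,t^{-1}(1-\log t)^{b+1/q}$. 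Multiplying by $\phi(t)$ transforms this head contribution into $K(t,f)$, yielding the claimed upper bound. For the lower bound, given any admissible decomposition $f = g_0+g_1$ with $g_1 \in B$, I would use $K(s,f) \leq \|g_0\|_{A_0}+K(s,g_1)$, integrate against $(s^{-1}(1-\log s)^b)^q\frac{ds}{s}$ on $(t,1)$, and compute that integral to be $\asymp t^{-q}(1-\log t)^{bq}$; multiplying by $\phi(t)^q$ and taking the infimum produces the tail part of the right-hand side. The remaining $K(t,f)$ term on the right is dominated by $K(\phi(t),f;A_0,B)$ because the non-increasing monotonicity of $K(\cdot,g_1)/\cdot$ together with the lower bound $\|g_1\|_B^q \geq (K(t,g_1)/t)^q \int_0^t (1-\log s)^{bq}\frac{ds}{s}$ yields $K(t,g_1) \lesssim \phi(t)\|g_1\|_B$, hence $K(t,f) \lesssim \|g_0\|_{A_0}+\phi(t)\|g_1\|_B$.

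The proof of \eqref{LemInterp2.2} follows the same template. Upper bound: with the same optimal decomposition $f=f_0^t+f_1^t$, estimate $\|f_0^t\|_B$ by splitting again at $s=t$; on $s \leq t$ the identity $f_0^t = f_0^s + (f_1^s-f_1^t)$ combined with $sK(t,f)/t \leq K(s,f)$ yields $K(s,f_0^t) \lesssim K(s,f)$, producing the head integral on the right-hand side of \eqref{LemInterp2.2}; on $s > t$ the trivial bound $K(s,f_0^t) \leq K(t,f)$ and the same weight computation contribute a term $\asymp K(t,f)t^{-1}(1-\log t)^b$, which is absorbed by the head integral via monotonicity of $K(\cdot,f)/\cdot$. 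Adding $\psi(t)\|f_1^t\|_{A_1} \leq \psi(t)K(t,f)/t$, which is also absorbed, gives the upper estimate. The lower bound is even cleaner: for any splitting $f=g_0+g_1$ with $g_1 \in A_1$, $K(s,f)\leq K(s,g_0)+s\|g_1\|_{A_1}$ integrated on $(0,t)$ yields $\|g_0\|_B^q + \psi(t)^q\|g_1\|_{A_1}^q$, so taking the infimum produces the lower estimate.

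The main obstacle is the careful bookkeeping of logarithmic weights: one must verify at every splitting that the cross terms are indeed absorbed by the dominant side, and this hinges on the exact matching of exponents forced by $b+1/q<0$. Apart from this, every step is standard Holmstedt-type manipulation and Hardy-type monotonicity of the weighted integrals.
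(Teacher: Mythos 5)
Your proof is correct, and it follows essentially the same Holmstedt-type reiteration strategy that Evans, Opic and Pick use in \cite{EvansOpicPick}, which is the reference the paper cites for this lemma without reproducing the argument. The bookkeeping you describe is accurate: the key estimates $\int_0^t(1-\log s)^{bq}\,\frac{ds}{s}\asymp(1-\log t)^{bq+1}$ (from $b<-1/q$), $\int_t^1 s^{-q}(1-\log s)^{bq}\,\frac{ds}{s}\asymp t^{-q}(1-\log t)^{bq}$, and the monotonicity bounds $K(s,g_1)/s\geq K(t,g_1)/t$ for $s\leq t$ and $sK(t,f)/t\leq K(s,f)$ for $s\leq t$ do exactly what you claim, and the choices of $\phi$ and $\psi$ make the cross-terms absorbable as you state.
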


We are now ready to give the

\begin{proof}[Proof of Theorem \ref{ThmBDS}]
	In light if  the John--Nirenberg inequality \cite{JohnNirenberg}, we have
	\begin{equation}\label{JNInequality}
		\|f-f_{Q_0;\mu}\|_{\text{exp} \, L (Q_0,\mu)} \lesssim \|f\|_{\text{BMO}(Q_0,\mu)}
	\end{equation}	which yields
	\begin{equation}\label{ProofThmBDS1}
		K(t,f-f_{Q_0;\mu}; L_{p,r}(Q_0,\mu), \text{exp} \, L (Q_0,\mu)) \lesssim K(t, f; L_{p,r}(Q_0,\mu), \text{BMO}(Q_0,\mu)).
	\end{equation}
	Since $\text{exp} \, L (Q_0,\mu) =  L_\infty (\log L)_{-1}(Q_0,\mu)$ (see Section \ref{SectionFunctionSpaces}), we can apply Lemma \ref{LemInterp1} and relation \eqref{LemInterp2.1} to establish
	\begin{align*}
		K(t (1-\log t), f; L_{p,r}(Q_0,\mu), \text{exp} \, L (Q_0,\mu) ) & \\
		&\hspace{-7cm}\asymp K(t (1-\log t), f; L_{p,r}(Q_0,\mu), (L_{p,r}(Q_0,\mu), L_\infty(Q_0,\mu))_{(1,-1),\infty}) \\
		& \hspace{-7cm} \asymp K(t, f; L_{p,r}(Q_0,\mu), L_\infty(Q_0,\mu)) \\
		& \hspace{-6cm} + t (1-\log t) \sup_{t \leq u \leq 1} u^{-1} (1-\log u)^{-1} K(u,f;L_{p,r}(Q_0,\mu), L_\infty(Q_0,\mu)) \\
		& \hspace{-7cm} \gtrsim K(t, f; L_{p,r}(Q_0,\mu), L_\infty(Q_0,\mu)).
	\end{align*}
This and  \eqref{KFunctLorentz} imply
	\begin{equation}\label{ProofThmBDS2}
	K(t (1-\log t), f; L_{p,r}(Q_0,\mu), \text{exp} \, L (Q_0,\mu))  \gtrsim \left(\int_0^{t^p} (u^{1/p} f_\mu^*(u))^r \frac{du}{u} \right)^{1/r}.
	\end{equation}
	 On the other hand, from \cite[Remark 3.7]{JawerthTorchinsky},
	\begin{equation}\label{JT}
		K(t, f; L_{p,r}(Q_0,\mu), \text{BMO}(Q_0,\mu)) \asymp \left(\int_0^{t^p} (u^{1/p} (f^{\#}_{Q_0;\mu})_\mu^*(u))^r \, \frac{du}{u} \right)^{1/p}.
	\end{equation}
	Combining \eqref{ProofThmBDS1}--\eqref{JT} yields  
 \eqref{ThmBDS1}.

	It remains to show the sharpness assertion \eqref{ThmBDS1Optim}. Suppose that there exists $\lambda > 0$ such that
	\begin{equation*}
		\int_0^{t (1-\log t)^{-\lambda}} (u^{1/p} (f-f_{Q_0})^{*}(u))^r \, \frac{du}{u} \lesssim \int_0^{t} (u^{1/p} f^{\#*}_{Q_0}(u))^r \, \frac{du}{u},
	\end{equation*}
	or, equivalently,
	\begin{equation*}
		\int_0^{t} (u^{1/p} (f-f_{Q_0})^{*}(u))^r \, \frac{du}{u} \lesssim \int_0^{t (1-\log t)^{\lambda}} (u^{1/p} f^{\#*}_{Q_0}(u))^r \, \frac{du}{u}.
	\end{equation*}
	Therefore, using  properties of rearrangements, we obtain
	\begin{align*}
		t^{1/p} (f-f_{Q_0})^*(t) &\asymp \left(\int_{t/2}^{t} (u^{1/p} (f-f_{Q_0})^{*}(u))^r \, \frac{du}{u} \right)^{1/r} \\
		& \lesssim \left(\int_0^{t (1-\log t)^\lambda} (u^{1/p} f^{\#*}_{Q_0}(u))^r \, \frac{du}{u} \right)^{1/r} \\
		& \lesssim t^{1/p} (1-\log t)^{\lambda/p} \|f\|_{\text{BMO}(Q_0)},
	\end{align*}
	which yields that there exist positive constants $c_1, c_2$ such that
	\begin{equation*}
		|\{x \in Q_0 : |f(x)-f_{Q_0}| > \xi\}|_d \leq c_1 e^{-c_2 \big(\frac{\xi}{\|f\|_{\text{BMO}(Q_0)}}\big)^{\frac{p}{\lambda}}}
	\end{equation*}
	The latter embedding implies $\lambda \geq p$ because $\text{exp} \, L (Q_0)$ is the smallest rearrangement invariant space that contains $\text{BMO}(Q_0)$ (cf. \cite{Pustylnik}.)
\end{proof}

	A careful examination of the proof of Theorem \ref{ThmBDS} shows the following
	\begin{thm}
	Let $\lambda > 0$. The following statements are equivalent:
	\begin{enumerate}[\upshape(i)]
	\item John--Nirenberg inequality: for each $f \in \emph{BMO}(Q_0)$, there exist positive constants $c_1$ and  $c_2$ such that
		\begin{equation*}
		|\{x \in Q_0 : |f(x)-f_{Q_0}| > \xi\}|_d \leq c_1 e^{-c_2 \big(\frac{\xi}{\|f\|_{\emph{BMO}(Q_0)}} \big)^{\frac{1}{\lambda}}}.
	\end{equation*}
	\item Oscillation inequality: for $1 < p < \infty$ and $0 < r \leq \infty$ we have
	\begin{equation*}
		\int_0^{t (1-\log t)^{-\lambda p}} (u^{1/p} (f-f_{Q_0})^{*}(u))^r \, \frac{du}{u} \lesssim \int_0^{t} (u^{1/p} f^{\#*}_{Q_0}(u))^r \, \frac{du}{u}, \quad f \in L_{p,r}(Q_0).
	\end{equation*}
	\item $\lambda \geq 1.$
	\end{enumerate}
	\end{thm}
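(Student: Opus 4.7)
The plan is to prove the cycle (iii)$\Rightarrow$(i)$\Rightarrow$(ii)$\Rightarrow$(iii), building on the machinery already assembled for Theorem \ref{ThmBDS}. The direction (iii)$\Rightarrow$(i) is essentially the classical John--Nirenberg inequality: the case $\lambda=1$ is standard, and for $\lambda>1$ the required decay rate only weakens, so the trivial inclusion $\text{exp}\,L(Q_0)\hookrightarrow\text{exp}\,L^{1/\lambda}(Q_0)$ suffices.

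For (i)$\Rightarrow$(ii), I would repeat the proof of Theorem \ref{ThmBDS} verbatim, replacing the Orlicz target $\text{exp}\,L(Q_0,\mu)=L_\infty(\log L)_{-1}(Q_0,\mu)$ by $\text{exp}\,L^{1/\lambda}(Q_0,\mu)=L_\infty(\log L)_{-\lambda}(Q_0,\mu)$. Hypothesis (i) furnishes the embedding $\|f-f_{Q_0;\mu}\|_{\text{exp}\,L^{1/\lambda}(Q_0,\mu)}\lesssim\|f\|_{\text{BMO}(Q_0,\mu)}$ in place of the John--Nirenberg bound \eqref{JNInequality}. The interpolation identifications of Lemmas \ref{LemInterp1} and \ref{LemInterp2} are then applied with $b=-\lambda$ and $q=\infty$; formula \eqref{LemInterp2.1} now forces the $K$-functional to be evaluated at the weighted scale $t(1-\log t)^\lambda$ rather than $t(1-\log t)$. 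Combining the resulting $K$-functional bound with \eqref{KFunctLorentz} and \eqref{JT} and performing the change of variable $s=(t(1-\log t)^\lambda)^p$ (so that $t^p\asymp s(1-\log s)^{-\lambda p}$) yields (ii).

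For (ii)$\Rightarrow$(iii), I would mimic the sharpness argument at the end of the proof of Theorem \ref{ThmBDS}: averaging (ii) over the dyadic interval $(t/2,t)$ and using the monotonicity of the rearrangement produces the pointwise bound $(f-f_{Q_0})^*(t)\lesssim (1-\log t)^\lambda\|f\|_{\text{BMO}(Q_0)}$, which is precisely the embedding $\text{BMO}(Q_0)\hookrightarrow\text{exp}\,L^{1/\lambda}(Q_0)$. Since Pustylnik's theorem \cite{Pustylnik} identifies $\text{exp}\,L(Q_0)$ as the smallest rearrangement-invariant space containing $\text{BMO}(Q_0)$, this embedding forces $\text{exp}\,L(Q_0)\hookrightarrow\text{exp}\,L^{1/\lambda}(Q_0)$, which is equivalent to $1/\lambda\le 1$, i.e., $\lambda\ge 1$.

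The main obstacle I anticipate is the (i)$\Rightarrow$(ii) step, specifically tracking the correct power of $(1-\log t)$ through the successive applications of the Holmstedt-type formulas \eqref{KFunctLorentz} and \eqref{LemInterp2.1} together with the final change of variable; the logarithmic weights in limiting interpolation are rather sensitive to the parameter $b$, so it is easy to drop or misplace an exponent.
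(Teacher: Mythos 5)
Your proof is correct and takes essentially the approach the paper intends: the paper introduces this theorem with the remark ``A careful examination of the proof of Theorem \ref{ThmBDS} shows the following'' and gives no further argument, so the expected proof is exactly the parametrized rerun of the ThmBDS argument that you carry out, with (i)$\Rightarrow$(ii) obtained by substituting $\mathrm{exp}\,L^{1/\lambda}=L_\infty(\log L)_{-\lambda}$ for $\mathrm{exp}\,L$ in \eqref{ProofThmBDS1}--\eqref{JT}, (ii)$\Rightarrow$(iii) being the sharpness argument of \eqref{ThmBDS1Optim} combined with Pustylnik's minimality of $\mathrm{exp}\,L$, and (iii)$\Rightarrow$(i) the classical John--Nirenberg inequality together with the trivial inclusion of exponential classes.
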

	This characterization fits into the research program developed in Mart\'in, Milman and Pustylnik \cite{MartinMilmanPustylnik} and Mart\'in and Milman \cite{MartinMilman}, where certain Sobolev--Poincar\'e  inequalities for smooth functions are equivalently characterized in terms of oscillation inequalities. Note that $\text{BMO}(Q_0)$ can be considered as the limiting space of the scale of Lipschitz spaces $\text{Lip}^\alpha(Q_0), 0 < \alpha < 1$, via the Campanato--Meyers theorem. For further details, we refer the reader to \cite{DeVoreSharpley} and \cite{MartinMilman14}.

\begin{proof}[Proof of Theorem \ref{ThmGJ}]
	Applying the John--Nirenberg inequality given by \eqref{JNInequality}, we have
	\begin{equation}\label{ThmGJProof1}
		K(t, f-f_{Q_0}; \text{exp} \, L (Q_0), L_\infty(Q_0)) \lesssim K(t, f; \text{BMO}(Q_0), L_\infty(Q_0)).
	\end{equation}
	In light of the Garnett--Jones characterization of the $K$-functional for the couple $(L_\infty(Q_0), \text{BMO}(Q_0))$ (cf. \cite{GarnettJones}, \cite{JawerthTorchinsky} and \cite{Shvartsman})
	\begin{equation*}
		K(t, f;  L_\infty(Q_0), \text{BMO}(Q_0)) \asymp  \|\overline{M}^{\#}_{e^{-t}, Q_0} f \|_{L_\infty(Q_0)}, \qquad t > 1,
	\end{equation*}
	we derive
	\begin{align}
		K(t, f; \text{BMO}(Q_0), L_\infty(Q_0)) & = t K(t^{-1}, f;  L_\infty(Q_0), \text{BMO}(Q_0)) \nonumber \\
		& \hspace{-3.5cm} \asymp t  \big\|\overline{M}^{\#}_{e^{-t^{-1}}, Q_0} f \big\|_{L_\infty(Q_0)}, \quad t \in (0,1). \label{ThmGJProof2}
	\end{align}
	
On the other hand, since $\text{exp} \, L(Q_0) = (L_1(Q_0), L_\infty(Q_0))_{(1,-1), \infty}$ (by Lemma \ref{LemInterp1}), we make use of  estimate \eqref{LemInterp2.2} together with the well-known formula $$K(t,f; L_1(Q_0), L_\infty(Q_0)) = t f^{**}(t)$$ (see, e.g., \cite{BennettSharpley}).
  Namely, we have, for each $t  \in (0,1)$,
	\begin{equation}\label{ThmGJProof3}
		K((1-\log t)^{-1} ,f; \text{exp} \, L(Q_0), L_\infty(Q_0))  \asymp \sup_{0 < u < t}  (1-\log u)^{-1} f^{**}(u).
	\end{equation}
 Inserting  \eqref{ThmGJProof2} and \eqref{ThmGJProof3} into \eqref{ThmGJProof1}, we complete the proof.
\end{proof}

\bigskip
\section{Proofs of Theorems  \ref{ThmDeVoreLorentz}, \ref{LemmaEmbBMOLorentzState} and optimality assertions
}
\label{section4}

To prove Theorem \ref{ThmDeVoreLorentz}, we use 
the  new embeddings between Besov spaces and $\text{BMO}(\R^d)$ given in Theorem \ref{LemmaEmbBMOLorentzState}. These results  
 are closely related to embedding theorems recently obtained by Seeger and Trebels \cite{SeegerTrebels}.

	\begin{proof}[Proof of Theorem \ref{LemmaEmbBMOLorentzState}]
Let $H_1(\R^d)$ be the usual Hardy space. According to \cite[Theorem 1.2]{SeegerTrebels}, we have
\begin{equation}\label{d}
H_1(\R^d) \hookrightarrow \dot{B}^{-d/p}_1 L_{p',1}(\R^d),
\end{equation}
where $\dot{B}^{-d/p}_1 L_{p',1}(\R^d)$ is the Fourier-analytically defined Lorentz--Besov space. It is well known that $\text{BMO}(\R^d)$ is the dual space of $H_1(\R^d)$, i.e., $(H_1(\R^d))' = \text{BMO}(\R^d)$ (see \cite[Theorem 2, p. 145]{FeffermanStein}). On the other hand, we claim that
\begin{equation}\label{d2}
(\dot{B}^{-d/p}_1 L_{p',1}(\R^d))' = \dot{B}^{d/p}_\infty L_{p,\infty}(\R^d).
\end{equation}
Assuming this result momentarily,  embedding  \eqref{LemmaEmbBMOLorentz} follows from \eqref{d} via duality.

 We just outline the proof of \eqref{d2} and leave further details to the interested reader. The claim can be shown by using the retraction method in a similar way as the corresponding duality assertion for classical Besov spaces \cite[Theorem 2.11.2, pp. 178--180]{Triebel83} together with the fact that Fourier--Besov spaces with positive smoothness based on $L_{p,q}(\R^d), \, 1 < p < \infty,$ can be equivalently characterized in terms of the $L_{p,q}(\R^d)$ moduli of smoothness (see the proof of the corresponding result for classical Besov spaces with $p= q$ in \cite[Sections 1.13 and 2.5.1]{Triebel78}, where now  Fourier multiplier assertions for $L_{p,q}(\R^d)$ are covered by interpolation of $L_p(\R^d)$).
	
	Embedding \eqref{LemmaEmbBMO*} is an immediate consequence of \eqref{LemmaEmbBMOLorentz} and the fact
 (see \cite[Theorem 1.2]{SeegerTrebels})
 that $\dot{H}^{d/p} L_{p,\infty}(\R^d) \hookrightarrow \dot{B}^{d/p}_\infty L_{p,\infty}(\R^d)$. In particular, setting $p=d/k > 1$ we obtain \eqref{LemmaEmbBMO}.
	\end{proof}

	\begin{proof}[Proof of Theorem \ref{ThmDeVoreLorentz}]
	\textsc{Case 1: $1 < p < \infty, 0 < q \leq \infty$, and $k > d/p$.} In virtue of the known embedding for Lorentz spaces,  Lemma \ref{LemmaEmbBMOLorentzState} implies that $\dot{B}^{d/p}_\infty L_{p,q}(\R^d) \hookrightarrow \text{BMO}(\R^d), \, 0 <  q \leq \infty$. Then
	\begin{equation}\label{ProofThmDeVoreLorentz1}
		K(t, f; L_{p,q}(\R^d), \text{BMO}(\R^d)) \lesssim K(t, f; L_{p,q}(\R^d), \dot{B}^{d/p}_\infty L_{p,q}(\R^d)).
	\end{equation}
	Next we compute these $K$-functionals. Concerning the left-hand side, by \eqref{JT},
		\begin{equation}\label{ProofThmDeVoreLorentz2}
		K(t, f; L_{p,q}(\R^d), \text{BMO}(\R^d)) \asymp \Big(\int_0^{t^p} (u^{1/p} f^{\#*}(u))^q \frac{du}{u} \Big)^{1/q}.
	\end{equation}
	On the other hand, it follows from
		\begin{equation}\label{ProofLemmaEmbBMOLorentzState1*}
	K(t^k, f; L_{p, q}(\R^d), \dot{W}^k L_{p, q}(\R^d)) \asymp \omega_k(f,t)_{p,q}, \quad f \in L_{p, q}(\R^d),
	\end{equation}
	(cf. \cite[3.9.4]{BrudnyiKrugljak}; see also \cite{GogatishviliOpicTikhonovTrebels} and \cite{MartinMilman14}) that
	\begin{equation}\label{ProofLemmaEmbBMOLorentzState1**}
	\dot{B}^{d/p}_\infty L_{p, q}(\R^d) = (L_{p,q}(\R^d), \dot{W}^k L_{p,q}(\R^d))_{\frac{d}{k p}, \infty}.
	\end{equation}
	Then we apply Holmstedt's reiteration formula \cite[Corollary 2.3, p. 310]{BennettSharpley} to establish
	\begin{align}
		K(t^{d/k p}, f; L_{p,q}(\R^d), \dot{B}^{d/p}_\infty L_{p,q}(\R^d))&\asymp t^{d/k p} \sup_{t^{1/k} < u < \infty} u^{-d/ p} K(u^k, f ; L_{p, q}(\R^d), \dot{W}^k L_{p,q}(\R^d))   \nonumber\\
		& 
 \asymp t^{d/k p} \sup_{t^{1/k} < u < \infty} u^{-d/p} \omega_k (f,u)_{p, q}. \label{ProofThmDeVoreLorentz3}
	\end{align}
	Plugging \eqref{ProofThmDeVoreLorentz2} and \eqref{ProofThmDeVoreLorentz3} into \eqref{ProofThmDeVoreLorentz1}, we derive  the desired estimate \eqref{ThmDeVore*Lorentz}.
	
	\textsc{Case 2: $1 < p < \infty, 0 < q \leq \infty$, and $k=d/p$.} By \eqref{ProofThmDeVoreLorentz2}, Lemma \ref{LemmaEmbBMOLorentzState} and \eqref{ProofLemmaEmbBMOLorentzState1*}, we have
	\begin{align*}
		\Big(\int_0^{t^d} (u^{k/d} f^{\#*}(u))^q \frac{du}{u} \Big)^{1/q} &\asymp K(t^k, f; L_{d/k, q}(\R^d), \text{BMO}(\R^d)) \\
		&\hspace{-2.5cm} \lesssim K(t^k, f; L_{d/k, q}(\R^d), \dot{W}^k L_{d/k, q}(\R^d)) \asymp \omega_k (f, t)_{d/k, q}.
	\end{align*}
	
	\textsc{Case 3: $1 < p < \infty, 0 < q \leq \infty$ and $k < d/p$.} In light of the Sobolev inequality (see, e.g.,  \cite[Theorem 2]{Milman})
	\begin{equation*}
		\dot{W}^k L_{p,q}(\R^d) \hookrightarrow L_{p^*,q}(\R^d), \qquad p^* = \frac{d p}{d-k p},
	\end{equation*}
	we obtain
	\begin{equation}\label{hfhfhhf}
		K(t^k,f; L_{p,q}(\R^d), L_{p^*,q}(\R^d)) \lesssim K(t^k,f; L_{p,q}(\R^d), \dot{W}^k L_{p,q}(\R^d)).
	\end{equation}
	Inserting the Holmstedt's formula \cite[Theorem 4.2]{Holmstedt}
	\begin{equation*}
	K(t^k,f; L_{p,q}(\R^d), L_{p^*,q}(\R^d)) \asymp \Big(\int_0^{t^d} (u^{\frac{1}{p}} f^*(u))^q \frac{du}{u} \Big)^{1/q} + t^k \Big(\int_{t^d}^\infty (u^{\frac{1}{p} - \frac{k}{d}} f^*(u))^q \frac{du}{u} \Big)^{1/q}
	\end{equation*}
	and \eqref{ProofLemmaEmbBMOLorentzState1*} in \eqref{hfhfhhf}, we infer that
	\begin{equation}\label{dhhdshahs}
		\Big(\int_0^{t^d} (u^{\frac{1}{p}} f^*(u))^q \frac{du}{u} \Big)^{1/q} + t^k \Big(\int_{t^d}^\infty (u^{\frac{1}{p} - \frac{k}{d}} f^*(u))^q \frac{du}{u} \Big)^{1/q} \lesssim \omega_k(f,t)_{p,q}
	\end{equation}
	and, in particular,
	\begin{equation*}
		\Big(\int_0^{t^d} (u^{\frac{1}{p}} f^*(u))^q \frac{du}{u} \Big)^{1/q}  \lesssim \omega_k(f,t)_{p,q}.
	\end{equation*}
	Applying now \eqref{ProofLem2.1} and the Hardy's inequality, we get
	\begin{align*}
		 \Big(\int_0^{t^d} (u^{1/p} f^{\# *}(u))^q \frac{du}{u} \Big)^{1/q} &\lesssim  \Big(\int_0^{t^d} (u^{1/p} f^{* *}(u))^q \frac{du}{u} \Big)^{1/q} \\
		 & \hspace{-3cm}\lesssim  \Big(\int_0^{t^d} (u^{1/p} f^{*}(u))^q \frac{du}{u} \Big)^{1/q} \lesssim  \omega_k(f,t)_{p,q}.
	\end{align*}
	
	 \textsc{Case 4:  $p=q=1$ and $k=d$.} It follows from the fundamental theorem of Calculus that $\dot{W}^d_1(\R^d) \hookrightarrow L_\infty(\R^d)$, and thus
	 \begin{equation*}
	 	K(t^d, f; L_1(\R^d), L_\infty(\R^d)) \lesssim K(t^d, f; L_1(\R^d), \dot{W}^d_1(\R^d)).
	 \end{equation*}
	 In light of 
	 $	K(t^d, f; L_1(\R^d), \dot{W}^d_1(\R^d)) \asymp \omega_d(f,t)_{1},$ $f \in L_{1}(\R^d),
	 $ 
	 and
	 \begin{equation*}
	  K(t, f; L_1(\R^d), L_\infty(\R^d)) = t f^{**}(t)
	 \end{equation*}
	 (see, e.g., \cite[Theorem 1.6, Chapter 5, p. 298]{BennettSharpley}) we arrive at \eqref{ThmDeVore*New}.
	
	 \textsc{Case 5: $p=q=1$ and $k < d$.} See Remark  \ref{RemarkCubes}(iv).
	
\end{proof}


The proof of the optimality of Theorem \ref{ThmDeVoreLorentz} relies on
 a study of
  the Fourier series with monotone-type coefficients. We obtain sharp estimates for oscillations and smoothness of such functions. Our method is based on realization results for moduli of smoothness (cf. Lemma \ref{LemModuliLorentz} below), the lower bound for the sharp maximal function given in Theorem \ref{ThmBDS}, and limiting interpolation techniques.

Note that the  characterization of functions from  $\text{BMO}(\T)$ in terms of their Fourier coefficients
was given by the celebrated Fefferman's result  \cite{SleddStegenga} assuming,  additionally, that  
 Fourier coefficients are non-negative.
  Furthermore, various characterizations of $\text{BMO}$ functions under special conditions  (e.g., lacunary Fourier series,  power series)
  are also known (see, e.g., \cite{ChamizoCordobaUbis, KolyadaLeindler} among other). However, these results cannot be applied
  to establish  pointwise estimates of both the oscillation (i.e., the sharp maximal function) and the moduli of smoothness. 

The next result confirms that inequalities \eqref{ThmDeVore*Lorentz} and \eqref{ThmDeVore*Lorentz2} are optimal and provide a non-trivial improvement of \eqref{DeVMax}. 
	\begin{prop}\label{ThmSharpnessAssertion}
	 Let $1 < p < \infty, 0 < q \leq \infty,$ and $k \in \N$. Let $b$ a positive slowly varying function on $(1,\infty)$ such that
	 \begin{equation}\label{ThmSharpnessAssertionAssump0}
	 	\int_1^\infty (b(u))^q \frac{du}{u} < \infty
	 \end{equation}
	  (where the integral should be replaced by the supremum if $q=\infty$). Set
	\begin{equation}\label{Auxb}
	 	\tilde{b}_q(t) = \Big(\int_t^\infty (b(u))^q \frac{du}{u} \Big)^{1/q}, \qquad t > 1.
	 \end{equation}
	 Assume that
	 \begin{equation}\label{ThmSharpnessAssertionAssump2}
	 	\tilde{b}_q(t) \lesssim \tilde{b}_q(t(\log t)^p) \quad \text{as} \quad t \to \infty.
	 \end{equation}
	 Let $f \in L_1(\T)$ and
	\begin{equation}\label{FouSer}
		f (x) \sim \sum_{n=1}^\infty n^{-1 + 1/p} b(n) \cos nx, \qquad x \in \T.
	\end{equation}
	Then
	\begin{equation}\label{ThmSharpnessAssertion1}
		\frac{f^{\# *}(t)}{\sup_{t < u < 1} u^{-1/p} \omega_k(f,u)_{p,q}} \to 0 \quad \text{as} \quad t \to 0
	\end{equation}
	and
		\begin{equation}\label{ThmSharpnessAssertion2}
	t^{-1/p} \Big(\int_0^{t} (u^{1/p} f^{\# *}(u))^q \, \frac{du}{u} \Big)^{1/q} \asymp \sup_{t < u < 1} u^{-1/p} \omega_k(f,u)_{p,q}.
\end{equation}
	\end{prop}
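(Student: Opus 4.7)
The plan is to evaluate both sides of \eqref{ThmSharpnessAssertion2} explicitly for the function $f$ given by \eqref{FouSer}, invoking Theorem \ref{ThmDeVoreLorentz} for the upper bound and Theorem \ref{ThmBDS} for the lower bound; the claim \eqref{ThmSharpnessAssertion1} will then follow by estimating the sharp maximal function from above by $f^{**}$ via \eqref{ProofLem2.1} and contrasting with the computed right-hand side.

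First, I would exploit that the coefficients $c_n = n^{-1+1/p} b(n)$ are positive with $b$ slowly varying: classical asymptotics for cosine series together with Abel summation give, as $t \to 0^+$,
\begin{equation*}
f^*(t) \asymp f^{**}(t) \asymp t^{-1/p} b(1/t).
\end{equation*}
Next, using the realization $\omega_k(f, 1/N)_{p,q} \asymp \|f - S_N f\|_{L_{p,q}(\T)} + N^{-k} \|(S_N f)^{(k)}\|_{L_{p,q}(\T)}$ (see Lemma \ref{LemModuliLorentz}) combined with Hardy--Littlewood-type estimates for Lorentz quasi-norms of cosine series with slowly varying positive coefficients, one finds
\begin{equation*}
\|f - S_N f\|_{L_{p,q}(\T)} \asymp \tilde{b}_q(N), \qquad N^{-k}\|(S_N f)^{(k)}\|_{L_{p,q}(\T)} \asymp b(N).
\end{equation*}
Slow variation together with \eqref{ThmSharpnessAssertionAssump0} yields $b(N) = o(\tilde{b}_q(N))$, so $\omega_k(f, u)_{p,q} \asymp \tilde{b}_q(1/u)$; since $u \mapsto u^{-1/p} \tilde{b}_q(1/u)$ is a power function multiplied by a slowly varying factor, it is equivalent to a decreasing function on $(0,1)$, and therefore
\begin{equation*}
\sup_{t < u < 1} u^{-1/p} \omega_k(f, u)_{p,q} \asymp t^{-1/p} \tilde{b}_q(1/t).
\end{equation*}

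For the lower bound in \eqref{ThmSharpnessAssertion2}, I would apply the periodic analogue of Theorem \ref{ThmBDS} to $f$ (which has vanishing mean on $\T$) to obtain
\begin{equation*}
\int_0^t (u^{1/p} f^{\#*}(u))^q \frac{du}{u} \gtrsim \int_0^{t(1-\log t)^{-p}} (u^{1/p} f^*(u))^q \frac{du}{u} \asymp \Bigl(\tilde{b}_q\bigl(\tfrac{(1-\log t)^p}{t}\bigr)\Bigr)^q \asymp (\tilde{b}_q(1/t))^q,
\end{equation*}
where the first equivalence substitutes the asymptotic for $f^*$ and changes variables via $v = 1/u$, and the last uses hypothesis \eqref{ThmSharpnessAssertionAssump2}. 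Together with the matching upper bound from Theorem \ref{ThmDeVoreLorentz}, this proves \eqref{ThmSharpnessAssertion2}. For \eqref{ThmSharpnessAssertion1}, the inequality $f^{\#*}(t) \lesssim f^{**}(t) \asymp t^{-1/p} b(1/t)$ from \eqref{ProofLem2.1} gives
\begin{equation*}
\frac{f^{\#*}(t)}{\sup_{t < u < 1} u^{-1/p} \omega_k(f,u)_{p,q}} \lesssim \frac{b(1/t)}{\tilde{b}_q(1/t)} \to 0 \quad \text{as} \quad t \to 0^+,
\end{equation*}
since $(b(N))^q \asymp \tilde{b}_q(N)^q - \tilde{b}_q(2N)^q = o(\tilde{b}_q(N)^q)$ by the slow variation of $\tilde{b}_q$ forced by \eqref{ThmSharpnessAssertionAssump2}.

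The main technical hurdle will be the two Lorentz-norm computations $\|f - S_N f\|_{L_{p,q}(\T)} \asymp \tilde{b}_q(N)$ and $\|(S_N f)^{(k)}\|_{L_{p,q}(\T)} \asymp N^k b(N)$, which amount to extending the classical Hardy--Littlewood--Paley theorem from $L_p$ to Lorentz quasi-norms in the presence of slowly varying amplitudes. The natural approach is to split $\T$ at the scale $|x| \asymp 1/N$: on $\{|x| \geq 1/N\}$ Abel summation and the Dirichlet kernel bound control the partial sums by $N^{-1+1/p} b(N)/|x|$, while on $\{|x| < 1/N\}$ one compares against the pointwise asymptotic of $f$. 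The slow-variation and integrability hypotheses on $b$ will both be essential to recognize the resulting tails as $\tilde{b}_q(N)$ and to rule out artificial contributions from $b(N)$.
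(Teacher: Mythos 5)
Your plan mirrors the paper's proof: both rely on Theorem \ref{ThmDeVoreLorentz}/Remark \ref{RemarkCubes}(vii) for the upper bound, on Theorem \ref{ThmBDS} for the lower bound, on the realization of $\omega_k$, and on the asymptotics $\omega_k(f,2^{-j})_{p,q}\asymp\tilde b_q(2^j)$ and $f^{**}(t)\asymp t^{-1/p}b(1/t)$. The ``main technical hurdle'' you correctly flag at the end is precisely the content of the paper's Lemmas \ref{LemModuliLorentz}--\ref{Lem3} (formulated for general monotone coefficient sequences, proved via Sagher's inequality and summation by parts), so your outline is sound but, as you acknowledge, defers the bulk of the actual work.
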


	\begin{rem}\label{remark-slowly-var}
(i) We will show in Appendix A that condition
(\ref{ThmSharpnessAssertionAssump2}) is in fact essential. 
\\
(ii) As examples of slowly varying functions satisfying  \eqref{ThmSharpnessAssertionAssump0} and \eqref{ThmSharpnessAssertionAssump2}, take $b (t) = (\log t)^{-\varepsilon} \psi(\log t)$ with $\varepsilon > 1/p$ and $\psi$ being a broken logarithmic function, or $b(t) = (\log t)^{-1/p} (\log (\log t))^{-\beta}$ with $\beta > 1/p$. See \cite{Bingham} for more examples of slowly varying functions.
\\
(iii) Functions \eqref{FouSer} can be easily extended to consider more general Fourier series $f (x) \sim \sum_{n=1}^\infty n^{-1 + 1/p} (b_1(n)\cos nx+ b_2(n)\sin nx)$.
	\end{rem}

The proof of Proposition \ref{ThmSharpnessAssertion} is based on technical Lemmas \ref{LemModuliLorentz}--\ref{Lem3}, which are interesting for their own sake and deal with Fourier series with general monotone coefficients.


A sequence $a=\{a_n\}$
is called \emph{general monotone}, written $a \in GM$, if there is a
constant $C > 0$ such that
\begin{equation*}
    \sum_{\nu=n}^{2n-1} |\Delta a_\nu| \leq C |a_n| \quad \text{for all} \quad n
    \in \mathbb{N}.
\end{equation*}
Here $\Delta a_\nu = a_\nu - a_{\nu+1}$ and the constant $C$ is
independent of $n$. It is proved in \cite[p. 725]{Tikhonov} that $a \in GM$
if and only if
\begin{equation}\label{GM1}
    |a_\nu| \lesssim |a_n| \quad \text{for} \quad n \leq \nu \leq 2n
\end{equation}
and
\begin{equation}\label{GM2}
    \sum_{\nu=n}^N |\Delta a_\nu| \lesssim |a_n| + \sum_{\nu=n+1}^N
    \frac{|a_\nu|}{\nu} \quad \text{for any} \quad n < N.
\end{equation}

Examples of $GM$ sequences include decreasing sequences, increasing sequences satisfying the condition $a_{2n}\lesssim a_n$,
quasi-monotone sequences, i.e., such that $a_n/n^\tau$ is decreasing for some $\tau \geq 0$ and many others; see \cite{Tikhonov}.

\begin{lem}\label{LemModuliLorentz}
	Let $1 < p < \infty, 0 < q \leq \infty$, and $k \in \N$. Assume that $f \in L_1(\T)$ and
	\begin{equation*}
		f (x) \sim \sum_{n=1}^\infty (a_n \cos nx + b_n \sin nx), \qquad x \in \T,
	\end{equation*}
	where $\{a_n\}, \{b_n\}$ are nonnegative general monotone sequences. For every $j \in \N_0$, we have
	\begin{equation}\label{ModLorentzGM}
	\omega_k(f, 2^{-j})_{p,q} \asymp  2^{-j k}  \Big(\sum_{\nu=0}^j 2^{\nu(k + 1/p') q} (a_{2^\nu}^q + b_{2^\nu}^q) \Big)^{1/q} +\Big(\sum_{\nu=j}^\infty 2^{\nu q/p'} (a_{2^\nu}^q + b_{2^\nu}^q) \Big)^{1/q}
	\end{equation}
	(with the usual modification if $q=\infty$).
\end{lem}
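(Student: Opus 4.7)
My approach is to combine the realization of the $L_{p,q}$-modulus of smoothness via the $K$-functional with a Hardy--Littlewood-type theorem for Fourier series with GM coefficients in Lorentz spaces. By the periodic analogue of \eqref{ProofLemmaEmbBMOLorentzState1*},
$$\omega_k(f, 2^{-j})_{p,q} \asymp K(2^{-jk}, f; L_{p,q}(\T), W^k L_{p,q}(\T)).$$
Taking the near-optimal decomposition $f = S_{2^j} f + (f - S_{2^j} f)$, where $S_N$ is a smooth dyadic truncation (a de la Vall\'ee Poussin type projection), reduces the upper bound for $\omega_k(f, 2^{-j})_{p,q}$ to estimating $\|f - S_{2^j} f\|_{L_{p,q}(\T)}$ and $\|(S_{2^j} f)^{(k)}\|_{L_{p,q}(\T)}$. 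The matching lower bound then follows from the uniform boundedness of $S_{2^j}$ and $I - S_{2^j}$ on $L_{p,q}(\T)$ for $1 < p < \infty$, which (by $K$-divisibility) forces the Fourier truncation to be a near-optimal realizer up to equivalence.

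The heart of the argument is the dyadic block estimate
$$\Big\| \sum_{2^\nu \leq n < 2^{\nu+1}} (a_n \cos nx + b_n \sin nx) \Big\|_{L_{p,q}(\T)} \asymp 2^{\nu/p'}(a_{2^\nu} + b_{2^\nu}),$$
which I would prove by direct computation of the non-increasing rearrangement: by \eqref{GM1}--\eqref{GM2} the sequences are essentially constant on each dyadic interval, so the block is comparable pointwise to a constant multiple of a shifted Dirichlet kernel, whose distribution function on $\T$ is classical. Combined with a Littlewood--Paley square-function characterization of $L_{p,q}(\T)$ for $1 < p < \infty$ (derived by interpolating the classical $L_p$ theorem), this yields
$$\|f - S_{2^j} f\|_{L_{p,q}(\T)} \asymp \Big(\sum_{\nu \geq j} 2^{\nu q/p'}(a_{2^\nu}^q + b_{2^\nu}^q)\Big)^{1/q},$$
while Bernstein's inequality applied block-by-block gives
$$\|(S_{2^j} f)^{(k)}\|_{L_{p,q}(\T)} \asymp \Big(\sum_{\nu = 0}^{j} 2^{\nu(k + 1/p')q}(a_{2^\nu}^q + b_{2^\nu}^q)\Big)^{1/q}.$$
Inserting these into the realization formula yields \eqref{ModLorentzGM}.

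The main obstacle will be the block estimate in the Lorentz setting. Its $L_p$-analogue is in \cite{Tikhonov}, but the $L_{p,q}$-refinement requires a rearrangement analysis tailored to GM coefficients rather than a soft interpolation argument, since the endpoint $L_1$ and $L_\infty$ bounds for the Dirichlet kernel fail. A secondary point of care is the lower bound: one must verify that the uniform boundedness of the dyadic projections transfers from $L_p(\T)$ to $L_{p,q}(\T)$, which again is obtained by real interpolation, so that any realizer of the $K$-functional can be replaced by the Fourier truncation without loss.
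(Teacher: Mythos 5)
Your starting point is the same as the paper's: the realization formula
\begin{equation*}
\omega_k(f,2^{-j})_{p,q}\asymp \|f-S_{2^j}f\|_{L_{p,q}(\T)}+2^{-jk}\|(S_{2^j}f)^{(k)}\|_{L_{p,q}(\T)},
\end{equation*}
and the dyadic block estimate $\|\Delta_\nu f\|_{L_{p,q}}\asymp 2^{\nu/p'}(a_{2^\nu}+b_{2^\nu})$ is correct (indeed, $|\Delta_\nu f(x)|\lesssim \min(2^\nu,1/|x|)\,a_{2^\nu}$ by Abel summation, and the lower bound follows from nonnegativity near $x=0$; ``pointwise comparable to a Dirichlet kernel'' is imprecise but inessential). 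The genuine gap is the claim that the Littlewood--Paley square-function characterization of $L_{p,q}(\T)$, interpolated from $L_p$, lets you pass from the block estimate to
\begin{equation*}
\|f-S_{2^j}f\|_{L_{p,q}(\T)}\asymp\Big(\sum_{\nu\geq j}2^{\nu q/p'}(a_{2^\nu}^q+b_{2^\nu}^q)\Big)^{1/q}.
\end{equation*}
Littlewood--Paley gives $\|g\|_{L_{p,q}}\asymp\big\|(\sum_\nu|\Delta_\nu g|^2)^{1/2}\big\|_{L_{p,q}}$, but converting the $L_{p,q}$-norm of the $\ell^2$-valued square function into the $\ell^q$-sum of block $L_{p,q}$-norms is false for general Fourier series and is not a consequence of interpolation. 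What makes it true here is that for GM series the dyadic blocks have almost disjoint regions of dominance in the rearrangement sense (the $\nu$-th block is large precisely where $|x|\sim 2^{-\nu}$, and dominates the rearrangement on that range of $t$). That disjointness claim \emph{is} the substance of the result; it is not covered by ``uniform boundedness of dyadic projections'' (which only yields an $\ell^\infty$ estimate on block norms), and you flag it as ``the main obstacle'' without supplying an argument.

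The paper circumvents this gluing issue entirely. For the lower bound it does not go through a square function: it invokes Sagher's Hardy--Littlewood--Paley type inequality \eqref{sagJMAA}, which bounds the $\ell_{p',q}$-norm of averaged Fourier coefficients directly by $\|f\|_{L_{p,q}(\T)}$; combined with the GM conditions \eqref{GM1}--\eqref{GM2} this immediately produces the weighted $\ell^q$-sum over dyadic indices, so the ``gluing'' is packaged inside a known theorem. For the upper bound it uses summation by parts to obtain a pointwise bound on $f-S_{2^j}f$ and then computes the rearrangement and the Lorentz norm directly (estimates \eqref{EstimRearrange}--\eqref{Estim3}), never invoking a block decomposition or a square function. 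Your proposal cannot be completed as written without either importing a Sagher-type Paley inequality in the Lorentz scale, or proving the rearrangement-disjointness of blocks for GM series explicitly -- either of which amounts to the hard part of the lemma.
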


This result is  known  for Lebesgue spaces (i.e., $p=q$)
\cite[Theorem 6.1]{GorbachevTikhonov}.

\begin{proof}[Proof of Lemma \ref{LemModuliLorentz}]
Since the periodic Hilbert transform is bounded in ${L_{p,q}(\T)}$, we may assume that $b_n =0, \, n \in \N$.
 We will apply the following realization result for moduli of smoothness \cite[Lemma 3.1]{GogatishviliOpicTikhonovTrebels}
	\begin{equation}\label{realization}
		\omega_k(f, 2^{-j})_{p,q}  \asymp \|f - S_{2^j} f\|_{L_{p,q}(\T)} + 2^{-j k} \|(S_{2^j} f)^{(k)}\|_{L_{p,q}(\T)},
	\end{equation}
	where $S_{2^j} f (x)  \sim \sum_{n=1}^{2^j} a_n \cos nx$. Although the proof given in \cite{GogatishviliOpicTikhonovTrebels} is only stated  for $1 \leq q \leq \infty$, it also holds in the case $0 < q \leq \infty$. 

To verify \eqref{ModLorentzGM}, in view of \eqref{realization} and \eqref{GM1}, it suffices to show the following estimates:
	\begin{equation}\label{Claim1}
		\Big(\sum_{i=2^{j+1}} ^\infty i^{q/p'-1} a_i^q\Big)^{1/q} \lesssim \|f- S_{2^j} f\|_{L_{p,q}(\T)} \lesssim \Big(\sum_{i=2^{j-1}} ^\infty i^{q/p'-1} a_i^q\Big)^{1/q}
	\end{equation}
	and
	\begin{equation}\label{Claim2}
		\|(S_{2^j} f)^{(k)}\|_{L_{p,q}(\T)} \asymp \Big(\sum_{i=1}^{2^j} i^{(k + 1/p')q - 1} a_i^{q} \Big)^{1/q}.
	\end{equation}

	Suppose that $q < \infty$. To estimate $\|f - S_{2^j} f\|_{L_{p,q}(\T)}$, we  make use of the inequality \cite[Theorem 2.4]{Sagher}
	\begin{equation}\label{sagJMAA}
		\Big\|\Big(\sup_{n \geq i} \Big|\frac{1}{n} \sum_{l=1}^n a_l \Big| \Big)_{i \in \N} \Big\|_{\ell_{p',q}} \lesssim \|f\|_{L_{p,q}(\T)}
	\end{equation}
	for $f(x) \sim \sum_{n=1}^\infty a_n \cos nx$. Here, $\ell_{p',q}$ denotes the Lorentz sequence space (see \cite{BennettRudnick, BennettSharpley}). Applying (\ref{sagJMAA}) for  $f - S_{2^j} f$ we obtain
	\begin{equation}\label{sag}
	\qquad	\quad\quad\Big\|\Big(\sup_{ n \geq i} \frac{1}{n} \sum_{l=1}^n \bar{a}_l \Big)_{i \in \N} \Big\|_{\ell_{p',q}} \lesssim \|f- S_{2^j} f\|_{L_{p,q}(\T)},
\quad
	\bar{a}_l = \left\{\begin{array}{lcl}
	0 & , & l \leq 2^j,\\

                             a_l & ,  & l \geq 2^j + 1.
            \end{array}
            \right.
	\end{equation}
%
	We split the left-hand side of \eqref{sag} into two terms $K_1 + K_2$, where
	\begin{equation*}
		K_1 = \bigg(\sum_{i=1}^{2^j} i^{q/p' - 1} \Big(\sup_{n \geq i} \frac{1}{n} \sum_{l=1}^n \bar{a}_l \Big)^q \bigg)^{1/q} \, \,  \text{and} \, \,  K_2 = \bigg(\sum_{i=2^j + 1}^\infty i^{q/p' - 1} \Big(\sup_{n \geq i} \frac{1}{n} \sum_{l=1}^n \bar{a}_l \Big)^q \bigg)^{1/q}.
	\end{equation*}
	Applying \eqref{GM1}, we obtain
	\begin{equation*}
		K_1 \geq \Big(\sum_{i=1}^{2^j} i^{q/p' - 1} \Big)^{1/q} \sup_{n \geq 2^j + 1} \frac{1}{n} \sum_{l=2^j + 1}^n a_l \gtrsim 2^{j/p'} a_{2^{j+1}}
	\end{equation*}
	and
	\begin{align*}
		K_2  &= \bigg(\sum_{i=2^j + 1}^\infty i^{q/p' - 1} \Big(\sup_{n \geq i} \frac{1}{n} \sum_{l=2^j + 1}^n a_l \Big)^q \bigg)^{1/q} \geq \bigg(\sum_{i=2^{j + 1}+2}^\infty i^{q/p' - 1} \Big( \frac{1}{i} \sum_{l=[i/2]}^i a_l \Big)^q \bigg)^{1/q}\\
		& \gtrsim \Big(\sum_{i=2^{j+1} + 2} ^\infty i^{q/p'-1} a_i^q\Big)^{1/q},
	\end{align*}
	where as usual $[x]$ denotes the integer part of $x \in \R$. Inserting these estimates into \eqref{sag}, we obtain the first inequality in \eqref{Claim1}.

		Next we prove the upper estimate in  \eqref{Claim1}. Using summation by parts, for $x \neq 0$ and $N \geq j$,
	\begin{align*}
		\Big| \sum_{\nu=2^j+1}^\infty a_\nu \cos \nu x \Big| & \lesssim \sum_{\nu=2^j}^{2^N} a_\nu + \frac{1}{|x|} \sum_{\nu=2^N+1}^\infty |\Delta a_\nu| \\
		& \lesssim  \sum_{\nu=2^j}^{2^N} a_\nu + \frac{1}{|x|}  \sum_{\nu=2^{N-1}}^\infty \frac{a_\nu}{\nu},
	\end{align*}
	where the last estimate follows from \eqref{GM1}, \eqref{GM2}. Consequently,
	\begin{equation}\label{EstimRearrange}
		(f- S_{2^j} f)^*(t) \leq C \left\{\begin{array}{lcl}
                               \sum_{\nu=2^j}^{2^N} a_\nu + \frac{1}{t}  \sum_{\nu=2^{N-1}}^\infty \frac{a_\nu}{\nu} & ,  & N \geq j, \, \quad t > 0, \\
                            & & \\
                            \frac{1}{t} \sum_{\nu=2^{j-1}}^\infty \frac{a_\nu}{\nu}& , & t > 0,
            \end{array}
            \right.
	\end{equation}
	where $C$ is a positive constant which is independent of $f, t, j$, and $N$. These estimates imply, together with monotonicity properties, that
	\begin{equation}\label{Estim1}
	\|f - S_{2^j} f\|_{L_{p,q}(\T)} \lesssim \Big(\sum_{i=0}^\infty 2^{-i q/p} ((f- S_{2^j} f)^*(2^{-i-1}))^q \Big)^{1/q} \lesssim D_1 + D_2,
	\end{equation}
	where
	\begin{equation*}
		D_1 = 2^{j/p'} \sum_{\nu= j-1}^\infty a_{2^\nu} \quad \text{and} \quad D_2 = \bigg(\sum_{i=j}^\infty 2^{-i q/p} \Big(\sum_{\nu=j}^{i} 2^\nu  a_{2^\nu} + 2^{i}  \sum_{\nu=i-1}^\infty a_{2^\nu}  \Big)^q \bigg)^{1/q}.
	\end{equation*}
	Further, applying H\"older's inequality if $q \geq 1$ and the embedding $\ell_q \hookrightarrow \ell_1$ if $q < 1$, we have
	\begin{equation}\label{Estim2}
		D_1 \lesssim \Big(\sum_{\nu=j-1}^\infty 2^{\nu q/p'} a_{2^\nu}^q \Big)^{1/q}.
	\end{equation}
	On the other hand, using Hardy's inequality, 
  we estimate
	\begin{equation}\label{Estim3}
		D_2 \lesssim \Big(\sum_{i=j}^\infty 2^{i q/p'} a_{2^{i}}^q \Big)^{1/q}.
	\end{equation}
	Therefore, in virtue of \eqref{Estim1}--\eqref{Estim3}, the second inequality in \eqref{Claim1} is shown.
	
	Finally, we proceed to prove \eqref{Claim2}. Invoking again \eqref{sagJMAA},
	\begin{align*}
		\|(S_{2^j} f)^{(k)}\|_{L_{p,q}(\T)} \gtrsim \bigg(\sum_{i=1}^{2^j} i^{q/p'-1} \Big(\frac{1}{i} \sum_{l=1}^i l^k a_l \Big)^q \bigg)^{1/q} \gtrsim \Big(\sum_{i=1}^{2^j} i^{(k + 1/p')q - 1} a_i^{q} \Big)^{1/q},
	\end{align*}
	where the last step follows from \eqref{GM1}. To establish the converse estimate, one can use a similar argument as above (see \eqref{EstimRearrange}) to show
		\begin{equation*}
		\big((S_{2^j} f)^{(k)}\big)^*(t) \lesssim \left\{\begin{array}{lcl}
                               \sum_{\nu=1}^{2^N} \nu^k a_\nu + \frac{1}{t}  \sum_{\nu=2^{N-1}}^{2^j} \frac{\nu^k a_\nu}{\nu} & ,  & N < j, \, \quad t > 0, \\
                            & & \\
                             \sum_{\nu=1}^{2^j} \nu^k a_\nu& , & t > 0.
            \end{array}
            \right.
	\end{equation*}
	Thus
	\begin{equation*}
		\|(S_{2^j} f)^{(k)}\|_{L_{p,q}(\T)} \lesssim \Big(\sum_{i=0}^\infty 2^{-i q/p} ((S^{(k)}_{2^j} f)^*(2^{-i-1}))^q  \Big)^{1/q}\lesssim E_1 + E_2,
	\end{equation*}
	where
	\begin{equation*}
		E_1 = 2^{-j/p} \sum_{\nu=0}^{j} 2^{\nu (k+1)} a_{2^\nu}  \,\, \text{and} \,\,
		E_2 = \bigg(\sum_{i=0}^j 2^{-i q/p} \Big(\sum_{\nu=0}^{i} 2^{\nu (k+1)} a_{2^\nu} + 2^{i}  \sum_{\nu=i-1}^{j} 2^{\nu k} a_{2^\nu} \Big)^q \bigg)^{1/q}.
	\end{equation*}
	By reasoning in the same way as in \eqref{Estim2} and \eqref{Estim3}, one has   $E_1 + E_2 \lesssim \Big(\sum_{i=0}^j 2^{i (k + 1/p') q} a_{2^{i}}^q\Big)^{1/q}$. This concludes the proof of \eqref{Claim2}.
	Similarly, \eqref{ModLorentzGM} also holds for $q=\infty$.
\end{proof}

As a byproduct of Lemma \ref{LemModuliLorentz} we obtain the following characterization of Lorentz--Besov norms.

\begin{lem}\label{Lem1}
	Let $1 < p < \infty,  0 < q, r \leq \infty, k \in \N$, and $0 < s < k$. Assume that $f \in L_1(\T)$ and
	\begin{equation}\label{Assumption1}
		f (x) \sim \sum_{n=1}^\infty (a_n \cos nx + b_n \sin nx), \qquad x \in \T,
	\end{equation}
	where $\{a_n\}, \{b_n\}$ are nonnegative general monotone sequences. For every $j \in \N$, we have
	\begin{align}
		\Big(\int_{2^{-j}}^1 (u^{-s} \omega_k(f,u)_{p,q})^r \frac{du}{u} \Big)^{1/r} & \asymp \Big(\sum_{\nu=0}^j 2^{\nu (s + 1/p') r} (a_{2^\nu}^r + b_{2^\nu}^r) \Big)^{1/r} \nonumber \\
		& \hspace{1cm}+  2^{j s} \Big(\sum_{\nu=j}^\infty 2^{\nu q/p'} (a_{2^{\nu}}^q + b_{2^\nu}^q) \Big)^{1/q}. \label{Lem1.1}
	\end{align}
\end{lem}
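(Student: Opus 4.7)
The plan is to combine the sharp description of $\omega_k(f, 2^{-j})_{p,q}$ given by Lemma \ref{LemModuliLorentz} with a discretization of the integral on the left-hand side of \eqref{Lem1.1} and standard discrete Hardy-type inequalities. Since $\omega_k(f, \cdot)_{p,q}$ is non-decreasing in its second argument, monotonicity yields
\begin{equation*}
\int_{2^{-j}}^1 (u^{-s} \omega_k(f,u)_{p,q})^r \frac{du}{u} \asymp \sum_{m=0}^{j-1} 2^{msr} \omega_k(f, 2^{-m})_{p,q}^r.
\end{equation*}
Set $c_\nu^q := a_{2^\nu}^q + b_{2^\nu}^q$ and, following Lemma \ref{LemModuliLorentz},
\begin{equation*}
A_m := \Big(\sum_{\nu=0}^m 2^{\nu(k+1/p')q} c_\nu^q\Big)^{1/q}, \qquad B_m := \Big(\sum_{\nu \geq m} 2^{\nu q/p'} c_\nu^q\Big)^{1/q},
\end{equation*}
so that $\omega_k(f, 2^{-m})_{p,q} \asymp 2^{-mk} A_m + B_m$. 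Thus the discretized sum is equivalent to $\Sigma_1 + \Sigma_2$ where $\Sigma_1 = \sum_{m=0}^{j-1} 2^{-m(k-s)r} A_m^r$ and $\Sigma_2 = \sum_{m=0}^{j-1} 2^{msr} B_m^r$.

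The next step is to apply discrete Hardy inequalities. Since $k - s > 0$, the mixed $\ell^r$-$\ell^q$ version of Hardy's inequality (cf.\ the analogous sums in \cite{GogatishviliOpicTikhonovTrebels}) gives
\begin{equation*}
\Sigma_1 \asymp \sum_{m=0}^{j-1} 2^{-m(k-s)r} \big(2^{m(k+1/p')} c_m\big)^r = \sum_{m=0}^{j-1} 2^{m(s+1/p')r} c_m^r,
\end{equation*}
the lower bound coming from the diagonal term $\nu = m$ inside $A_m$. For $\Sigma_2$, I would write $B_m^q = \sum_{\nu = m}^{j-1} 2^{\nu q/p'} c_\nu^q + B_j^q$. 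Inserting this into $\Sigma_2$, the contribution from $B_j^q$ gives $B_j^r \sum_{m=0}^{j-1} 2^{msr} \asymp 2^{jsr} B_j^r$, while the contribution from $\sum_{\nu=m}^{j-1}$, after another Hardy inequality (now with exponent $s > 0$, going outward), is absorbed into $\Sigma_1$. The $\nu = j$ summand missing from the first sum of the right-hand side of \eqref{Lem1.1} is controlled by $2^{jsr} B_j^r$ using the trivial bound $B_j \geq 2^{j/p'} c_j$, and the matching lower bounds follow by retaining only diagonal terms in the definitions of $A_m$ and $B_m$.

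The main technical obstacle will be the case analysis in the discrete Hardy inequalities, because $r$ and $q$ can differ and one cannot always interchange the two levels of summation freely. When $r \leq q$, the subadditivity of $t^{r/q}$ reduces $A_m^r$ and $B_m^r$ termwise to scalar sums, after which the classical discrete Hardy inequality applies directly and the geometric factors collapse to the diagonal. When $r > q$, one inserts a weight $2^{\pm \varepsilon \nu}$ for small $\varepsilon > 0$ and uses Hölder's inequality, after which the dyadic sums telescope in the expected way. In either case the positivity of $k-s$ (for $\Sigma_1$) and of $s$ (for $\Sigma_2$) is precisely what is needed to force the resulting geometric series to concentrate at the diagonal, producing the right-hand side of \eqref{Lem1.1}.
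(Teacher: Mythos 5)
Your proof is correct and follows essentially the same route as the paper's: discretize the integral on $(2^{-j},1)$ by monotonicity of $\omega_k$, insert the two-term characterization from Lemma \ref{LemModuliLorentz}, and then collapse the resulting double sums by discrete Hardy inequalities, where $k-s>0$ handles the head sums involving $A_m$ and $s>0$ handles the tail sums involving $B_m$. The paper writes the same decomposition slightly more compactly as $K_1+K_2+K_3$ and invokes Hardy's inequality in one stroke, whereas you make the $r\le q$ vs.\ $r>q$ case split explicit and track the boundary index $\nu=j$ separately, but these are presentational differences only.
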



\begin{rem}
Passing to the limit $j \to \infty$ in \eqref{Lem1.1}, we derive that the Besov seminorm
	\begin{equation*}
		\vertiii{f}_{\dot{B}^s_r L_{p,q}(\T);k} := \Big(\int_0^1 (u^{-s} \omega_k(f,u)_{p,q})^r \frac{du}{u} \Big)^{1/r}  \asymp \Big(\sum_{\nu=0}^\infty 2^{\nu (s + 1/p') r} (a_{2^\nu}^r + b_{2^\nu}^r) \Big)^{1/r}.
	\end{equation*}
Thus the fine integrability parameter $q$ does not play a role when working with the global Lorentz--Besov norm. In particular, we have
	\begin{equation*}
		\vertiii{f}_{B^s_r L_{p,q}(\T);k} \asymp \vertiii{f}_{B^s_{p,r}(\T);k}; 
	\end{equation*}
	see also \cite[Theorem 4.22]{DominguezTikhonov}.
However, the quantitative estimate \eqref{Lem1.1} strongly depends on  $q$, since it is easily seen that
  the terms
	\begin{equation*}
		\Big(\sum_{\nu=0}^j 2^{\nu (s + 1/p') r} (a_{2^\nu}^r + b_{2^\nu}^r) \Big)^{1/r}\qquad \text{and} \qquad 2^{j s} \Big(\sum_{\nu=j}^\infty 2^{\nu q/p'} (a_{2^{\nu}}^q + b_{2^\nu}^q) \Big)^{1/q},
	\end{equation*}
	are not comparable.
\end{rem}

\begin{proof}[Proof of Lemma \ref{Lem1}]
	It follows from Lemma \ref{LemModuliLorentz} that
	$$
		\Big(\int_{2^{-j}}^1 (u^{-s} \omega_k(f,u)_{p,q})^r \frac{du}{u} \Big)^{1/r}  \asymp K_1 + K_2 + K_3,
	$$
	where
	\begin{equation*}
		K_1 = \Big(\sum_{\nu=0}^j \big(2^{\nu (s -k) q}\sum_{i = 0}^\nu 2^{i(k + 1/p')q} (a_{2^{i}}^q + b_{2^{i}}^q) \big)^{r/q} \Big)^{1/r},
		\end{equation*}
		\begin{equation*}
			K_2 =  \Big(\sum_{\nu=0}^j\big(2^{\nu s q} \sum_{i=\nu}^j 2^{i q/p'} (a_{2^{i}}^q + b_{2^{i}}^q) \big)^{r/q} \Big)^{1/r}, \quad  K_3 = 2^{j s} \Big(\sum_{\nu=j}^\infty 2^{\nu q/p'} (a_{2^{\nu}}^q + b_{2^\nu}^q) \Big)^{1/q}.
	\end{equation*}
	Applying Hardy's inequality (since $0 < s < k$),
	\begin{equation*}
		K_1  \asymp K_2 \asymp \Big(\sum_{\nu=0}^j 2^{\nu (s + 1/p') r} (a_{2^\nu}^r + b_{2^\nu}^r) \Big)^{1/r}
	\end{equation*}
	and thus 
  we arrive at \eqref{Lem1.1}.
\end{proof}

Our next result provides an upper estimate of the sharp maximal function $f^{\#}$ in terms of
 Fourier coefficients of $f$.

\begin{lem}\label{Lem2}
	Assume that $f \in L_1(\T)$ and
	\begin{equation*}
		f (x) \sim \sum_{n=1}^\infty (a_n \cos nx + b_n \sin nx), \qquad x \in \T,
	\end{equation*}
	where $\{a_n\}, \{b_n\}$ are nonnegative general monotone sequences. Then
	\begin{equation*}
		f^{\#*}(2^{-j}) \lesssim \sum_{n=0}^j 2^n (a_{2^n} + b_{2^n})+ 2^j   \sum_{n=j}^\infty \sum_{\nu=n}^\infty (a_{2^\nu} + b_{2^\nu})
	\end{equation*}
	for  $j \in \N.$
\end{lem}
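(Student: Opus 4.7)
The plan is to decompose $f$ into a polynomial part and a tail via $f = S_{2^j}f + (f - S_{2^j}f)$, where $S_{2^j}f$ denotes the $2^j$-th partial Fourier sum, and to bound each piece separately. Since $f^{\#}$ is subadditive (indeed $|f - f_Q| \leq |g - g_Q|+|h - h_Q|$ whenever $f = g+h$), and since rearrangements satisfy $(g+h)^*(t) \leq g^*(t/2) + h^*(t/2)$, we obtain
\begin{equation*}
f^{\#*}(2^{-j}) \leq (S_{2^j}f)^{\#*}(2^{-j-1}) + (f - S_{2^j}f)^{\#*}(2^{-j-1}).
\end{equation*}

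For the polynomial piece, I would use the trivial bound $(S_{2^j}f)^{\#}(x) \leq 2 \|S_{2^j}f\|_{L_\infty(\T)}$, together with the triangle inequality $\|S_{2^j}f\|_{L_\infty(\T)} \leq \sum_{n=1}^{2^j}(a_n+b_n)$, and the property \eqref{GM1} of general monotone sequences to group dyadic blocks: $\sum_{n=2^\nu}^{2^{\nu+1}-1}(a_n+b_n) \asymp 2^\nu(a_{2^\nu}+b_{2^\nu})$. Summing over $\nu = 0, \dots, j$ produces the first summand $\sum_{n=0}^j 2^n(a_{2^n}+b_{2^n})$ in the claim.

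For the tail, I would invoke \eqref{ProofLem2.1} to get $(f - S_{2^j}f)^{\#*}(t) \lesssim (f - S_{2^j}f)^{**}(t)$ and then exploit the pointwise rearrangement bound \eqref{EstimRearrange} already established inside the proof of Lemma~\ref{LemModuliLorentz}: for $u \in (2^{-k-1}, 2^{-k}]$ with $k \geq j$, by Abel summation and the $GM$-conditions \eqref{GM1}--\eqref{GM2},
\begin{equation*}
(f - S_{2^j}f)^*(u) \lesssim \sum_{\nu=j}^{k} 2^\nu (a_{2^\nu}+b_{2^\nu}) + 2^k \sum_{\nu=k}^\infty (a_{2^\nu}+b_{2^\nu}).
\end{equation*}
Integrating dyadically and using $(f - S_{2^j}f)^{**}(2^{-j-1}) \asymp (f - S_{2^j}f)^{**}(2^{-j})$, I would arrive at
\begin{equation*}
(f - S_{2^j}f)^{**}(2^{-j}) \lesssim 2^j \sum_{k=j}^{\infty} 2^{-k}\Bigl[\sum_{\nu=j}^{k} 2^\nu (a_{2^\nu}+b_{2^\nu}) + 2^k \sum_{\nu=k}^\infty (a_{2^\nu}+b_{2^\nu})\Bigr].
\end{equation*}

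The main (mild) obstacle is the combinatorial rearrangement of the two sums above. I would interchange the order of summation in the first contribution: the inner geometric sum $\sum_{k \geq \nu}2^{-k} \asymp 2^{-\nu}$ collapses it to $2 \cdot 2^j \sum_{\nu \geq j}(a_{2^\nu}+b_{2^\nu})$. Since this is in turn dominated (up to an absolute constant) by the second contribution $2^j\sum_{k \geq j}\sum_{\nu \geq k}(a_{2^\nu}+b_{2^\nu}) = 2^j\sum_{\nu \geq j}(\nu-j+1)(a_{2^\nu}+b_{2^\nu})$, combining the two bounds yields the desired second summand $2^j \sum_{n=j}^\infty \sum_{\nu=n}^\infty(a_{2^\nu}+b_{2^\nu})$ and completes the proof.
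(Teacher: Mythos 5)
Your proof is correct. It differs from the paper's in structure: the paper works directly with $f$ (not with $f - S_{2^j}f$), applying the reduction $f^{\#*}(2^{-j}) \lesssim f^{**}(2^{-j})$ via \eqref{ProofLem2.1} and then inserting a pointwise bound on $f^*(t)$ with a free parameter $m$ (the analogue of \eqref{EstimRearrange} for $f$ itself) before integrating dyadically; your version instead splits $f = S_{2^j}f + (f - S_{2^j}f)$, controls the polynomial piece through the crude estimate $(S_{2^j}f)^{\#} \leq 2\|S_{2^j}f\|_{L_\infty}$ combined with \eqref{GM1}, and only then applies \eqref{ProofLem2.1} and \eqref{EstimRearrange} to the tail. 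Both ultimately reduce to the same Abel-summation-type rearrangement bound and the same dyadic Fubini calculation, so the two routes are essentially reorganizations of one another. The paper's version is marginally shorter because it avoids the extra subadditivity step $f^{\#*}(2^{-j}) \leq (S_{2^j}f)^{\#*}(2^{-j-1}) + (f-S_{2^j}f)^{\#*}(2^{-j-1})$; your version has the small advantage of directly recycling the bound \eqref{EstimRearrange} already established in the proof of Lemma~\ref{LemModuliLorentz} rather than re-deriving an analogous pointwise estimate for $f$. One small point worth making explicit: the $L_\infty$ bound for the polynomial piece gives $(S_{2^j}f)^{\#*}(t) \leq 2\|S_{2^j}f\|_{L_\infty}$ for all $t$, and the dyadic block estimate $\sum_{n=2^\nu}^{2^{\nu+1}-1}(a_n+b_n) \lesssim 2^\nu(a_{2^\nu}+b_{2^\nu})$ only needs the upper half of \eqref{GM1}, which is all that is required here.
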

\begin{proof}
For simplicity, we  assume that $b_n = 0$. 
Taking into account \eqref{ProofLem2.1}, it suffices  to show that
\begin{equation*}
		f^{**}(2^{-j}) \lesssim \sum_{n=0}^j 2^n a_{2^n}+ 2^j   \sum_{n=j}^\infty \sum_{\nu=n}^\infty a_{2^\nu}.
	\end{equation*}
Arguing along similar lines as \eqref{EstimRearrange}, we obtain
	\begin{equation*}
		f^*(t) \leq C \left(  \sum_{n=1}^m a_n + \frac{1}{t} \Big(a_{m+1} + \sum_{n=m+2}^\infty \frac{a_n}{n} \Big) \right), \quad t > 0, \quad m \in \N.
	\end{equation*}
	Applying this estimate together with monotonicity properties (see \eqref{GM1}), we establish
	\begin{align*}
		f^{**}(2^{-j}) &= 2^j \int_0^{2^{-j}} f^*(u) \, du \asymp 2^j \sum_{\nu=j}^\infty 2^{-\nu} f^*(2^{-\nu})  \\
		& \lesssim 2^j \sum_{\nu=j}^\infty 2^{-\nu}
\left(\sum_{n=0}^{\nu} 2^n a_{2^n} + 2^\nu  \sum_{n={\nu-1}}^\infty a_{2^n}  \right)
\\
		& \asymp \sum_{n=0}^j 2^n a_{2^n}+ 2^j   \sum_{n=j}^\infty \sum_{\nu=n}^\infty a_{2^\nu}.
	\end{align*}
\end{proof}

Concerning the lower bound of $f^{\#}$, we obtain the following

\begin{lem}\label{Lem3}
Let $1 < p < \infty$ and $0 < q \leq \infty$. Assume that $f \in L_1(\T)$ and
		\begin{equation*}
		f (x) \sim \sum_{n=1}^\infty (a_n \cos nx + b_n \sin nx), \qquad x \in \T,
	\end{equation*}
	where $\{a_n\}, \{b_n\}$ are nonnegative general monotone sequences. Then
	\begin{equation*}
		 \sum_{n=j}^\infty  2^{-n q/p} n^{-q} \big(2^{n} n^p (a_{[2^n n^p]} + b_{[2^n n^p]})\big)^q \lesssim \int_0^{2^{-j}} (u^{1/p} f^{\#*}(u))^q \, \frac{du}{u}, \quad j \in \N.
	\end{equation*}
\end{lem}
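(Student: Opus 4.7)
Set $N_n := [2^n n^p]$, so that $N_n \asymp 2^n n^p$. A direct calculation gives
$2^{-nq/p} n^{-q}(2^n n^p)^q = 2^{nq/p'} n^{q(p-1)} = (2^n n^p)^{q/p'} \asymp N_n^{q/p'}$,
so the desired estimate is equivalent to
\begin{equation*}
\sum_{n\geq j} N_n^{q/p'}(a_{N_n}^q + b_{N_n}^q) \lesssim \int_0^{2^{-j}}(u^{1/p}f^{\#*}(u))^q\frac{du}{u}.
\end{equation*}

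The plan is to apply Theorem \ref{ThmBDS} to transfer the lower bound from the sharp maximal function to $f^*$, and then lower-bound the latter by Fourier coefficients using the machinery developed in the proof of Lemma \ref{LemModuliLorentz}. Since the series has no constant term, $f_\T = 0$ and $(f-f_\T)^* = f^*$. Specializing Theorem \ref{ThmBDS} to $Q_0 = \T$, $r = q$, and $t = 2^{-j}$, the critical factor becomes $t(1-\log t)^{-p} \asymp 2^{-j} j^{-p} \asymp 1/N_j$, which is precisely the reason the index $N_n$ appears in the statement. This reduces the problem to showing
\begin{equation*}
\sum_{n\geq j} N_n^{q/p'}(a_{N_n}^q + b_{N_n}^q) \lesssim \int_0^{c/N_j}(u^{1/p}f^*(u))^q\frac{du}{u}.
\end{equation*}

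For this estimate I use the lower bound \eqref{Claim1} from the proof of Lemma \ref{LemModuliLorentz}, which rests on Sagher's inequality \eqref{sagJMAA}. The proof there is written for $M = 2^j$ but extends verbatim to arbitrary $M\geq 1$, giving $\|f - S_M f\|_{L_{p,q}(\T)}^q \gtrsim \sum_{k \geq 2M} k^{q/p'-1}(a_k^q + b_k^q)$. The GM property \eqref{GM1}, which gives $a_k \asymp a_{N_n}$ for $k \in [N_n, 2N_n]$, allows one to sparsify the tail sum:
\begin{equation*}
\sum_{k\geq 2M}k^{q/p'-1}(a_k^q+b_k^q) \gtrsim \sum_{n : N_n \geq 2M} N_n^{q/p'}(a_{N_n}^q+b_{N_n}^q),
\end{equation*}
so that the choice $M \asymp N_j$ exhausts the desired right-hand side; it then suffices to prove
\begin{equation*}
\|f - S_M f\|_{L_{p,q}(\T)}^q \lesssim \int_0^{c'/M}(u^{1/p} f^*(u))^q \frac{du}{u}.
\end{equation*}

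This last comparison is the \textbf{main obstacle}. My approach is to split $(f - S_M f)^*(u) \leq f^*(u/2) + \|S_M f\|_\infty$: on $(0, c'/M)$ the $f^*(u/2)$ term dominates and integrates directly into the truncated Lorentz norm on the right; on $(c'/M, 1)$ the contribution of $(f - S_M f)^*$ is absorbed using a Lebesgue-constant estimate for $\|S_M f\|_\infty$ in conjunction with the GM hypothesis, which shows that this tail contribution is of the same order as the sparse sum already controlled in the previous step. Chaining Theorem \ref{ThmBDS} with the resulting tail bound and the sparsification based on \eqref{GM1} then yields the required inequality.
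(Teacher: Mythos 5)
Your opening move — applying Theorem \ref{ThmBDS} with $Q_0=\T$, $r=q$, $t=2^{-j}$ (using that $f_\T=0$) to reduce to estimating $\int_0^{2^{-j}j^{-p}}(u^{1/p}f^{**}(u))^q\frac{du}{u}$ from below — is exactly what the paper does, and it is the right reduction. After that, however, you take a different and, as written, incomplete route. The paper's mechanism is a direct pointwise inequality between Fourier coefficients and the maximal function: splitting $f$ into its even and odd parts $f_\pm$, it shows $b_n\lesssim\int_0^{\pi/n}f_-(u)\,du$ and $a_n\lesssim\int_0^{\pi/n}|f_+(u)|\,du$ (the latter from the second integral $\int_0^t\int_0^x f_+ = 2\sum a_n n^{-2}\sin^2(nt/2)$), and then applies the Hardy--Littlewood rearrangement inequality to get
\begin{equation*}
n(a_n+b_n)\;\lesssim\; n\int_0^{\pi/n}f^*(u)\,du\;\lesssim\; f^{**}(1/n).
\end{equation*}
Discretizing $\int_0^{2^{-j}j^{-p}}$ at the points $2^{-n}n^{-p}\asymp 1/N_n$ and plugging in this bound with $n$ replaced by $N_n=[2^nn^p]$ gives the result in two lines. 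No norm of the tail $f-S_Mf$ ever enters.

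Your detour via \eqref{Claim1}, i.e.\ via $\|f-S_Mf\|_{L_{p,q}(\T)}$, creates a gap you name the ``main obstacle'' but do not close. After the sparsification step you need
\begin{equation*}
\|f-S_Mf\|^q_{L_{p,q}(\T)}\;\lesssim\;\int_0^{c'/M}(u^{1/p}f^*(u))^q\,\frac{du}{u},
\end{equation*}
which is not an ingredient available in the paper and is a genuinely new estimate requiring its own proof. Your sketch of it does not work: the split $(f-S_Mf)^*(u)\leq f^*(u/2)+\|S_Mf\|_\infty$ combined with a Lebesgue-constant bound presupposes $f\in L_\infty$, which is not part of the hypothesis (GM coefficients allow $f$ to be unbounded); and arguing that the contribution on $(c'/M,1)$ ``is of the same order as the sparse sum already controlled'' is circular, since that sparse sum is precisely the quantity you are trying to estimate from above. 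You should discard the $\|f-S_Mf\|$ machinery and instead prove the pointwise bound $n(a_n+b_n)\lesssim f^{**}(1/n)$ directly from the GM property \eqref{GM1} and the even/odd decomposition, as the paper does; this both fills the gap and shortens the argument.
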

\begin{proof}
	Let $f_-(x)=\frac{f(x)-f(-x)}2$ and $f_+(x)=f(x)-f_-(x)$.
By \eqref{GM1}, making use of the method of \cite{tikhonov1},  it is plain to check that
$
		b_n \lesssim \int_0^{\frac{\pi}{n} } f_-(u) \, du.
$
Moreover, noting that $$\int_0^{t } \int_0^{x} f_+(u) \, du\, dx=2\sum_{n=1}^\infty\frac{a_n}{n^2}\sin^2 \frac{nt}2,
$$
we also derive $
		a_n \lesssim \int_0^{\frac{\pi}{n} } |f_+(u)| \, du.
$ 
Thus we have
	\begin{equation*}
		a_n + b_n \lesssim \int_{-\pi/n }^{\pi/n } |f(u)| \, du.
	\end{equation*}
Using basic properties of rearrangements \cite[Lemma 2.1, Chapter 2, p. 44]{BennettSharpley} implies
	\begin{equation}\label{ProofLem3.1}
	n (a_n + b_n) \lesssim n \int_{-\pi/n }^{\pi/n} |f(u)| \, du \lesssim  n \int_0^{\pi/n} f^*(u) \, du \lesssim f^{**}\Big(\frac{1}{n}\Big).
	\end{equation}
	Invoking Theorem \ref{ThmBDS} (see also Remark \ref{Rem1}(iii)), we obtain
	\begin{align*}
		 \int_0^{2^{-j}} (u^{1/p} f^{\#*}(u))^q \, \frac{du}{u} & \gtrsim \int_0^{2^{-j} j^{-p}} (u^{1/p} f^{**}(u))^q \, \frac{du}{u} \gtrsim \sum_{n=j}^\infty (2^{-n} n^{-p})^{q/p}  (f^{**}(2^{-n} n^{-p}))^q  \\
		 &\gtrsim \sum_{n=j}^\infty  2^{-n q/p} n^{-q} \big(2^{n} n^p (a_{[2^n n^p]} + b_{[2^n n^p]})\big)^q,
	\end{align*}
	where the last step follows from \eqref{ProofLem3.1}.

\end{proof}

We are now in a position to prove Proposition \ref{ThmSharpnessAssertion}.
\begin{proof}[Proof of Proposition \ref{ThmSharpnessAssertion}]
	Let $j \in \N$. Applying Lemma \ref{Lem1} with $s=1/p$ and $r=\infty$ and basic properties of slowly varying functions (see \cite{Bingham}), we have
	\begin{align}
		\sup_{2^{-j} < u < 1} u^{-1/p} \omega_k(f,u)_{p,q} &\asymp \sup_{\nu=0, \ldots, j} 2^{\nu/p} b(2^\nu) +  2^{j/p} \Big(\sum_{\nu=j}^\infty  b(2^\nu)^q \Big)^{1/q} 
\asymp 
                 2^{j/p} \tilde{b}_q(2^j).
                 \label{ProofThmSharpnessAssertion1}
	\end{align}	
Further, in light of Lemma \ref{Lem2},
	\begin{equation}\label{ProofThmSharpnessAssertion2}
	f^{\#*}(2^{-j}) \lesssim \sum_{n=0}^j 2^{n/p} b(2^n) + 2^j   \sum_{n=j}^\infty \sum_{\nu=n}^\infty 2^{\nu(-1+1/p)} b(2^\nu) \asymp 2^{j/p} b(2^j).
	\end{equation}
  On the other hand,	 Lemma \ref{Lem3} yields
	\begin{align}
		 \left(\int_0^{2^{-j}} (u^{1/p} f^{\#*}(u))^q \, \frac{du}{u} \right)^{1/q} &\gtrsim \left(\sum_{n=j}^\infty 2^{-n q/p} n^{-q} (2^{n} n^p (2^n n^p)^{-1 + 1/p}b(2^n n^p))^q \right)^{1/q} \nonumber \\
		 & = \left(\sum_{n=j}^\infty (b(2^n n^p))^q \right)^{1/q} \asymp \tilde{b}_q(2^j j^p). \label{ProofThmSharpnessAssertion3}
	\end{align}
	
	By \eqref{ProofThmSharpnessAssertion1} and \eqref{ProofThmSharpnessAssertion2},
	\begin{equation*}
		\frac{f^{\#*}(2^{-j}) }{\sup_{2^{-j} < u < 1} u^{-1/p} \omega_k(f,u)_{p,q}} \lesssim \frac{b(2^j)}{\tilde{b}_q(2^j)}
	\end{equation*}
	and thus, \eqref{ThmSharpnessAssertion1} holds since
	\begin{equation*}
		\lim_{t \to \infty} \frac{b(t)}{\tilde{b}_q(t)} = 0
	\end{equation*}
	(see \cite[Propositions 1.3.6(ii) and 1.5.9b]{Bingham}). Further, taking into account assumption \eqref{ThmSharpnessAssertionAssump2}, it follows from \eqref{ProofThmSharpnessAssertion3} and \eqref{ProofThmSharpnessAssertion1} that
	\begin{align*}
		2^{j/p}  \left(\int_0^{2^{-j}} (u^{1/p} f^{\#*}(u))^q \, \frac{du}{u} \right)^{1/q} & \gtrsim 2^{j/p} \tilde{b}_q(2^j j^p) \gtrsim 2^{j/p} \tilde{b}_q(2^j) \\
		&\hspace{-3cm} \asymp \sup_{2^{-j} < u < 1} u^{-1/p} \omega_k(f,u)_{p,q}.
	\end{align*}
	Hence, assertion \eqref{ThmSharpnessAssertion2} follows now from \eqref{ThmDeVore*Lorentz}.
\end{proof}

Our next objective is to show the sharpness of \eqref{ThmDeVore*New}, i.e.,
$t f^{**}(t) \lesssim \omega_1(f,t)_{1}$.

\begin{prop}\label{SharpnessThmDeVore*New}
	Let $k\in \N$ and  $f \in L_1(\T)$ be such that
	\begin{equation*}
		f (x) \sim \sum_{n=1}^\infty   a_n \cos n x, \qquad x \in \T,
	\end{equation*}
where
$\{a_n\}$ is a decreasing convex sequence satisfying, for some $\varepsilon > 0$,
	\begin{equation}\label{SharpnessThmDeVore*New1--}a_n n^{k-\varepsilon}\lesssim a_m m^{k-\varepsilon},\qquad n\le m.
\end{equation}
 Then
	\begin{equation}\label{SharpnessThmDeVore*New1}
	\omega_k(f, t)_1 \lesssim t f^{**}(t).
	\end{equation}
\end{prop}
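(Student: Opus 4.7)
My plan is to exploit the non-negativity of $f$ (forced by convexity and positivity of $\{a_n\}$) via its Fej\'er kernel expansion, reducing the estimate on $\omega_k(f,t)_1$ to control of $a_{[1/t]}$, which is finally bounded by $t\,f^{**}(t)$ through \eqref{ProofLem3.1}.

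First, since $\{a_n\}$ is convex, decreasing, and tends to $0$, setting $a_0:=2a_1-a_2$ preserves convexity and $f$ (up to an additive constant, immaterial for $\omega_k$ with $k\ge 1$) admits the Fej\'er representation
\[
f(x)=\sum_{n=0}^{\infty}(n+1)\,\Delta^2 a_n\, F_n(x)\quad\text{in } L_1(\T),
\]
where $F_n$ is the $n$-th Fej\'er kernel: $F_n\ge 0$, $\|F_n\|_1=1$, and Bernstein's inequality gives $\|F_n^{(k)}\|_1\lesssim n^k$. Applying $\omega_k(\cdot,t)_1$ termwise together with the standard bounds $\omega_k(F_n,t)_1\le 2^k\|F_n\|_1$ and $\omega_k(F_n,t)_1\le t^k\|F_n^{(k)}\|_1$, we obtain, with $N:=[1/t]$,
\[
\omega_k(f,t)_1\lesssim t^k\sum_{n=1}^{N} n^{k+1}\Delta^2 a_n+\sum_{n>N} n\,\Delta^2 a_n.
\]

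Next, each sum is converted to one involving $\{a_n\}$ via Abel summation. For the first sum, summing by parts twice and dropping the boundary terms at $n=N$ (which appear with the favourable sign due to $\Delta^2 a_n,\, a_n\ge 0$) gives $\sum_{n=1}^{N} n^{k+1}\Delta^2 a_n\lesssim\sum_{n=1}^{N} n^{k-1}a_n$. For the second, summing by parts once and using $n\Delta a_n\to 0$ (a consequence of convexity, monotonicity, and $a_n\to 0$) yields the telescoping identity $\sum_{n>N} n\,\Delta^2 a_n=(N+1)\Delta a_{N+1}+a_{N+2}$. Since $\{\Delta a_n\}$ is non-increasing we have $(N+1)\Delta a_{N+1}\lesssim a_{[(N+1)/2]}$, and the dyadic doubling $a_{n/2}\asymp a_n$---an immediate consequence of \eqref{SharpnessThmDeVore*New1--} applied with $n/2$ and $n$---bounds this piece by $a_N$.

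Finally, invoking \eqref{SharpnessThmDeVore*New1--} with $m=N$ and $n\le N$ we get $a_n\lesssim a_N(N/n)^{k-\varepsilon}$, and hence
\[
t^k\sum_{n=1}^{N} n^{k-1}a_n\lesssim t^k a_N N^{k-\varepsilon}\sum_{n=1}^{N} n^{\varepsilon-1}\asymp t^k a_N N^k\asymp a_N,
\]
where positivity of $\varepsilon$ is essential to ensure $\sum_{n=1}^{N} n^{\varepsilon-1}\asymp N^\varepsilon$. Thus $\omega_k(f,t)_1\lesssim a_N=a_{[1/t]}$. Applying now inequality \eqref{ProofLem3.1} (which in our cosine setting reads $na_n\lesssim f^{**}(1/n)$) at $n=N$ yields $a_{[1/t]}\lesssim t\,f^{**}(t)$, completing the proof. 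The main technical obstacle is the careful accounting of boundary terms arising in the two Abel summations; these are absorbed exactly by the dyadic doubling of $\{a_n\}$ and the positivity of $\varepsilon$ in \eqref{SharpnessThmDeVore*New1--}.
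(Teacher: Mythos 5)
Your proof is correct, and its overall structure coincides with the paper's: you show $\omega_k(f,t)_1\lesssim a_{[1/t]}$ from the convexity together with the regularity hypothesis \eqref{SharpnessThmDeVore*New1--}, and then conclude by $a_{[1/t]}\lesssim t\,f^{**}(t)$ via \eqref{ProofLem3.1}. The one difference is that the paper gets the first bound by citing Aljan\u ci\'c's lemma (Lemma \ref{LemmaAljancic}), $\omega_k(f,1/n)_1\lesssim n^{-k}\sum_{\nu\le n}\nu^{k-1}a_\nu$, and then uses \eqref{SharpnessThmDeVore*New1--} to see that this sum is $\asymp a_n$; you instead re-derive that estimate from scratch by writing $f$ as a nonnegative combination of Fej\'er kernels, applying the two-sided bound $\omega_k(F_n,t)_1\lesssim\min\{1,(tn)^k\}$ termwise, and carrying out the two Abel summations (correctly noting that the boundary terms at $n=N$ have the favorable sign, that $n\Delta a_n\to 0$ follows from convexity plus summability, and that \eqref{SharpnessThmDeVore*New1--} yields the dyadic doubling $a_{[n/2]}\lesssim a_n$ needed to absorb $(N+1)\Delta a_{N+1}$). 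This buys a self-contained argument in place of the citation, at the cost of some length; the endgame — the application of \eqref{ProofLem3.1} at $n=[1/t]$ — is identical to the paper's.
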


A typical example of $\{a_n\}$ satisfying
\eqref{SharpnessThmDeVore*New1--} is  $a_n=n^{-\varkappa}d_n$ 	with $0 < \varkappa < k$ and
$d_n=d(n)$ with a slowly varying function $d(x)$. Note that in this case, if $0 < \varkappa < 1$, then $f(x)\sim x^{\varkappa-1} d(1/x) \Gamma(1-\varkappa)\sin \frac{\pi \varkappa}2$  as $x\to 0+$ (cf. \cite{Tikhonov}).
The proof of Proposition \ref{SharpnessThmDeVore*New} is based  on the following estimates of moduli of smoothness of Fourier series with convex coefficients.

\begin{lem}[{\cite{Aljancic}}]\label{LemmaAljancic}
	 Assume that  $f \in L_1(\T)$ is such that
	$
		f (x) \sim \sum_{n=1}^\infty a_n  \cos n x,$ $x \in \T,
	$ 
	 where  $\{a_n\}$ is a decreasing convex sequence. Then,  for   $k \in \N$,
	\begin{equation*}
		\omega_k\Big(f, \frac{1}{n}\Big)_1 \lesssim \frac{1}{n^k} \sum_{\nu=1}^n {\nu^{k-1}}{a_\nu}, \qquad n \in \N.
	\end{equation*}
\end{lem}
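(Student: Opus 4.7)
The strategy is to exploit the classical fact that a cosine series with convex decreasing coefficients can be rewritten as a nonnegative combination of Fejér kernels, and then estimate the $k$-th difference of each Fejér kernel using both the trivial $L_1$ bound and the $L_1$ Bernstein inequality. I would begin by applying Abel summation twice to the partial sum $\sum_{k=1}^N a_k\cos kx$, using the identity $\sum_{\ell=1}^{k}(k+1-\ell)\cos\ell x = \tfrac{k+1}{2}(F_k(x)-1)$, where $F_k$ is the Fejér kernel of order $k$. Letting $N\to\infty$ (justified because convexity together with $a_n\to 0$ forces $n\Delta a_n\to 0$, and the hypothesis $f\in L_1(\T)$ controls the remaining boundary piece), one obtains the representation
\begin{equation*}
f(x) \;=\; \tfrac{1}{2}\sum_{m=1}^\infty (m+1)\,\Delta^2 a_m\, F_m(x) \;+\; C,
\end{equation*}
where $\Delta^2 a_m = a_m - 2a_{m+1}+a_{m+2}\ge 0$ by convexity. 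Since $\Delta_h^k C = 0$ for $k\ge 1$, this gives $\Delta_h^k f = \tfrac{1}{2}\sum_{m\ge 1}(m+1)\,\Delta^2 a_m\,\Delta_h^k F_m$.

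Next I would estimate $\|\Delta_h^k F_m\|_{L_1(\T)}$ in two complementary ways: trivially by $2^k\|F_m\|_{L_1(\T)}\asymp 1$; and by $\|\Delta_h^k F_m\|_{L_1(\T)} \le h^k\|F_m^{(k)}\|_{L_1(\T)}\lesssim (hm)^k$, using the identity $\Delta_h^k g(x)=\int_0^h\cdots\int_0^h g^{(k)}(x+t_1+\cdots+t_k)\,dt_1\cdots dt_k$ together with the $L_1$ Bernstein inequality $\|T^{(k)}\|_1\le m^k\|T\|_1$ for trigonometric polynomials $T$ of degree $\le m$. Combining these, $\|\Delta_h^k F_m\|_{L_1(\T)}\lesssim \min\{1,(mh)^k\}$, and specializing $h=1/n$ and splitting at $m=n$ yields
\begin{equation*}
\omega_k(f,1/n)_1 \;\lesssim\; n^{-k}\sum_{m=1}^n (m+1)\,m^k\,\Delta^2 a_m \;+\; \sum_{m=n+1}^\infty (m+1)\,\Delta^2 a_m.
\end{equation*}

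The final step is to apply Abel summation twice on each of these two sums so as to pass from $\Delta^2 a_m$ back to $a_m$. For the tail, one Abel summation gives $\sum_{m>n}(m+1)\Delta^2 a_m = (n+2)\Delta a_{n+1}+a_{n+2}$, where the infinite boundary term vanishes because $m\Delta a_m\to 0$. Here the convexity bound $n\Delta a_{2n-1}\le a_n-a_{2n}\le a_n$ controls $n\Delta a_{n+1}\lesssim a_{\lfloor n/2\rfloor}$, and the monotonicity $a_\nu\ge a_n$ for $\nu\le n$ yields $a_n\lesssim n^{-k}\sum_{\nu=1}^n\nu^{k-1}a_\nu$ (and similarly for $a_{\lfloor n/2\rfloor}$), so the whole tail fits into the desired right-hand side. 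For the main sum, two successive Abel transformations give
\begin{equation*}
\sum_{m=1}^n (m+1)\,m^k\,\Delta^2 a_m \;\lesssim\; \sum_{m=1}^n m^k\,\Delta a_m \;\lesssim\; \sum_{m=1}^n m^{k-1}\,a_m,
\end{equation*}
where at each step the boundary contribution at $m=n$ carries a favourable sign (coming from $\Delta a_{n+1}\ge 0$, resp.\ $-n^k a_{n+1}\le 0$) and so can be discarded, while any remaining $a_1$-type contribution is automatically part of the $\nu=1$ summand on the right.

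The main technical obstacle is twofold. First, justifying the Fejér-kernel representation rigorously requires verifying that the boundary terms $a_N\tilde D_N$ and $N\Delta a_{N-1}(F_{N-1}-1)$ arising from the double Abel transformation tend to zero in $L_1(\T)$; for convex decreasing $a_n$ this follows from $n\Delta a_n\to 0$ together with $\|F_{N-1}\|_1\asymp 1$ and an approximation argument for the Dirichlet piece (or, alternatively, one may subtract off a de la Vallée Poussin-type mean and avoid the Dirichlet kernel altogether). Second, one must track the two Abel summations of the last step carefully to ensure that every boundary contribution is genuinely absorbed into $n^{-k}\sum_{\nu=1}^n\nu^{k-1}a_\nu$; this is straightforward term by term but requires attention because one Abel step introduces $m^k\asymp (m+1)m^k - m(m-1)^k$ as the relevant weight, after which the combination with convexity cleanly produces the sharp right-hand side of the lemma.
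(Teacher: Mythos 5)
Your proof is correct. Note, however, that the paper does not prove this lemma at all: it is quoted verbatim from Aljan\v{c}i\'c's 1967 paper and used as a black box, so there is no internal argument to compare against. What you have written is essentially the classical route to such estimates (going back to Zygmund, Ch.~V): represent a convex null cosine series as a nonnegative superposition $f=\tfrac12\sum_m (m+1)\Delta^2 a_m F_m + C$ of Fej\'er kernels, bound $\|\Delta_h^k F_m\|_{L_1(\T)}\lesssim \min\{1,(m|h|)^k\}$ by combining the trivial bound with the $L_1$ Bernstein inequality, and then undo the two Abel summations. All the steps check out: decreasing convex Fourier coefficients of an $L_1$ function necessarily tend to $0$, which gives $\Delta a_m\ge 0$, $\Delta^2 a_m\ge 0$ and $m\Delta a_m\to 0$; the tail computation $\sum_{m>n}(m+1)\Delta^2 a_m=(n+2)\Delta a_{n+1}+a_{n+2}$ is right, and the bounds $n\Delta a_{n+1}\lesssim a_{\lceil n/2\rceil}$ and $a_j\lesssim n^{-k}\sum_{\nu\le n}\nu^{k-1}a_\nu$ for $j\asymp n$ absorb it correctly; the two Abel transformations on the main sum discard boundary terms of the favourable sign, as you say. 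The only point deserving a reference rather than a sketch is the Fej\'er representation itself (the vanishing of the boundary terms is handled pointwise for $x\neq 0$ and the $L_1$ identity follows by monotone convergence, since $\Delta^2 a_m\ge 0$ and $F_m\ge 0$); this is exactly Zygmund's Theorem V.1.5, so your argument is complete modulo that standard citation.
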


\begin{proof}[Proof of Proposition \ref{SharpnessThmDeVore*New}]
	Applying Lemma \ref{LemmaAljancic} and taking into account condition \eqref{SharpnessThmDeVore*New1--}, 
 we obtain
	\begin{equation*}
		\omega_k \Big( f, \frac{1}{n}\Big)_1 \lesssim
\frac{1}{n^k} \sum_{\nu=1}^n {\nu^{k-1}}{a_\nu} \asymp a_n.
	\end{equation*}	
On the other hand, by \eqref{ProofLem3.1},
	\begin{equation*}
		\frac{1}{n} f^{**} \Big( \frac{1}{n}\Big) \gtrsim a_n.
	\end{equation*}
	The desired estimate \eqref{SharpnessThmDeVore*New1} now follows from  monotonicity properties of moduli of smoothness and $f^{**}$.
\end{proof}

\begin{proof}[Proof of Remark \ref{RemarkCubes} \emph{(iv)}]
	Let $\text{Ext}$ be the linear extension operator for Sobolev spaces given by Calder\'on--Stein \cite[Chapter VI, Section 3, pp. 180--192]{Stein70}. Then 
	\begin{equation*}
		\text{Ext}: L_p(Q_0) \to L_p(\R^d)  \quad \text{and} \quad \text{Ext}: W^k_p(Q_0) \to W^k_p(\R^d)
	\end{equation*}
	for $k \in \N$ and $1 \leq p \leq \infty$. By the  interpolation properties of Sobolev spaces \cite{DeVoreScherer}, the previous estimates admit extensions to Lorentz--Sobolev spaces
	\begin{equation}\label{112}
		\text{Ext}: W^k L_{p,q}(Q_0) \to W^k L_{p,q}(\R^d), \qquad 1 < p < \infty, \quad 0 < q \leq \infty.
	\end{equation}
	Thus, for each $t \in (0,1)$,
	\begin{equation*}
		K(t^k,\text{Ext} f; L_{p,q}(\R^d), W^k L_{p,q}(\R^d)) \lesssim K(t^k,f; L_{p,q}(Q_0), W^k L_{p,q}(Q_0))
	\end{equation*}
	or, equivalently, (see \eqref{ProofLemmaEmbBMOLorentzState1*})
	\begin{equation}\label{333222}
		t^k \|\text{Ext} f\|_{L_{p,q}(\R^d)} + \omega_k(\text{Ext} f, t)_{p,q} \lesssim t^k \|f\|_{L_{p,q}(Q_0)} + \omega_k(f, t)_{p,q}.
	\end{equation}
	On the other hand, taking into account 
  the trivial estimate $\|\text{Re} f\|_{\text{BMO}(Q_0)} \leq \|f\|_{\text{BMO}(\R^d)}$, we derive
	\begin{equation*}
		K(t, \text{Re} f; L_{p,q}(Q_0), \text{BMO}(Q_0)) \leq K(t, f; L_{p,q}(\R^d), \text{BMO}(\R^d)), \quad t \in (0,1).
	\end{equation*}
	According to \eqref{JT} and \eqref{ProofThmDeVoreLorentz2},
	\begin{equation}\label{7474747}
	\int_0^{t} (u^{1/p} (\text{Re} f)^{\#*}_{Q_0}(u))^q \frac{du}{u} \lesssim \int_0^{t} (u^{1/p} f^{\#*}(u))^q \frac{du}{u}, \quad t \in (0,1).
	\end{equation}
	Assume $k > d/p$. Combining \eqref{ThmDeVore*Lorentz}, \eqref{333222} and \eqref{7474747}, we arrive at
	\begin{align*}
		t^{-d/p}  \Big(\int_0^{t^d} (u^{1/p} f_{Q_0}^{\# *}(u))^q \frac{du}{u} \Big)^{1/q} &\lesssim  t^{-d/p}  \Big(\int_0^{t^d} (u^{1/p} (\text{Ext} f)^{\# *}(u))^q \frac{du}{u} \Big)^{1/q}  \\
		& \lesssim \sup_{ t < u < \infty} u^{-d/p} \omega_k(\text{Ext} f,u)_{p,q} \\
		& \lesssim \| \text{Ext} f\|_{L_{p,q}(\R^d)} + \sup_{ t < u < 1} u^{-d/p} \omega_k(\text{Ext} f,u)_{p,q} \\
		& \lesssim \|f\|_{L_{p,q}(Q_0)} + \sup_{ t < u < 1} u^{-d/p} \omega_k(f,u)_{p,q}.
	\end{align*}
	This proves \eqref{ThmDeVore*LorentzCubes}. The proofs of \eqref{ThmDeVore*Lorentz2Cubes} and \eqref{ThmDeVore*NewCubes} are similar but now using \eqref{ThmDeVore*Lorentz2} and \eqref{ThmDeVore*New}, respectively.
	
\end{proof}

\bigskip
\section{Proof of Theorem \ref{ThmDeVoreDer} and its optimality}

\begin{proof}[Proof of Theorem \ref{ThmDeVoreDer}]
	We start by proving \eqref{ThmDeVoreDer<}. According to \eqref{LemmaEmbBMO}, we have
	\begin{equation}\label{000202002}
		\dot{W}^k L_{d/k, \infty}(\R^d) \hookrightarrow \text{BMO}(\R^d).
	\end{equation}
	On the other hand, in light of the Sobolev embedding (see, e.g., \cite[Theorem 2]{Milman}), we obtain
	\begin{equation}\label{ThmDeVoreDer1*}
		\dot{W}^k L_{r,q}(\R^d) \hookrightarrow L_{p,q}(\R^d), \qquad r= \frac{d p}{d + k p},
	\end{equation}
	provided that either $k < d(1-1/p)$ or $k=d(1-1/p)$ and $q=1$. Hence, we have
	\begin{equation}\label{ThmDeVoreDer1}
		K(t, f; L_{p,q}(\R^d), \text{BMO}(\R^d)) \lesssim K(t, f;\dot{W}^k L_{dp/(d+k p),q}(\R^d), \dot{W}^k L_{d/k, \infty}(\R^d)).
	\end{equation}
	By \eqref{ProofThmDeVoreLorentz2},
	\begin{equation}\label{ThmDeVoreDer2}
		K(t, f; L_{p,q}(\R^d), \text{BMO}(\R^d))   \asymp \left(\int_0^{t^p} (u^{1/p} f^{\#*}(u))^q \, \frac{du}{u}  \right)^{1/q}.
	\end{equation}
	Applying the characterization of the $K$-functional for Sobolev spaces given in \cite{DeVoreScherer} (cf. also \cite{CalderonMilman} and \cite{DeVoreSharpley}) together with Holmstedt's formula \cite[Theorem 4.2]{Holmstedt}, we see that
	\begin{align}
		K(t, f; \dot{W}^k L_{r,q}(\R^d), \dot{W}^k L_{d/k, \infty}(\R^d)) &\asymp K(t, |\nabla^k f|; L_{r,q}(\R^d), L_{d/k, \infty}(\R^d)) \nonumber \\
		&\hspace{-5cm} \asymp \Big(\int_0^{t^p} (u^{1/r} |\nabla^k f|^*(u))^q \, \frac{du}{u} \Big)^{1/q} + t \sup_{t^p < u < \infty} u^{k/d} |\nabla^k f|^*(u).  \label{ThmDeVoreDer3}
	\end{align}
	Inserting the estimates \eqref{ThmDeVoreDer2} and \eqref{ThmDeVoreDer3} into \eqref{ThmDeVoreDer1} we derive
	\begin{align*}
		\left(\int_0^{t} (u^{1/p} f^{\#*}(u))^q \, \frac{du}{u}  \right)^{1/q} & \lesssim  \Big(\int_0^{t} (u^{1/r} |\nabla^k f|^*(u))^q \, \frac{du}{u} \Big)^{1/q} \\
		& \hspace{1cm} + t^{1/p} \sup_{t < u < \infty} u^{k/d} |\nabla^k f|^*(u).
	\end{align*}
	
The proof of \eqref{ThmDeVoreDer<Cubes} follows the same lines as above, but now relying on the inhomogeneous counterparts of \eqref{000202002} and \eqref{ThmDeVoreDer1*}, i.e.,
	 \begin{equation}\label{0030030}
		W^k L_{d/k, \infty}(Q_0) \hookrightarrow \text{BMO}(Q_0)
	\end{equation}
	and the classical embedding
	\begin{equation*}
		W^k L_{r,q}(Q_0) \hookrightarrow L_{p,q}(Q_0), \qquad r= \frac{d p}{d + k p}.
	\end{equation*}
	The proof of \eqref{0030030} is an immediate consequence of \eqref{000202002} and the fact that (see \eqref{112})
	\begin{equation*}
		\text{Ext}: W^k L_{d/k,\infty}(Q_0) \to W^k L_{d/k,\infty}(\R^d).
	\end{equation*}

\end{proof}

	To show the optimality of Theorem \ref{ThmDeVoreDer}, to avoid unnecessary complications, we will only concern ourselves with $k=1$.
	
\begin{prop}\label{ThmDeVoreDerSharpnessAssertion}
	Let $1 < p < \infty$ and $r= \frac{dp}{d+p}$. Assume that either of the following conditions is satisfied:
	\begin{enumerate}[\upshape(i)]
	 \item $1 < d (1-1/p)$ and $1 \leq q \leq \infty$,
	 \item $1= d(1-1/p)$ and $q=1$.
	 \end{enumerate}
	 Let $b$ be a  positive slowly varying function on $(0,1)$ satisfying
		 \begin{equation*}
	 	\int_0^1 (b(u))^q \frac{du}{u} < \infty
	 \end{equation*}
	 (where the integral should be replaced by the supremum if $q=\infty$).
	 Set
	\begin{equation*}
	 	\bar{b}_q(t) = \Big(\int^t_0 (b(u))^q \frac{du}{u} \Big)^{1/q}, \qquad t < 1.
	 \end{equation*}
	 Assume that 
	 \begin{equation}\label{ThmDeVoreDerSharpnessAssertion1}
	 	 \bar{b}_q(t)  \lesssim \bar{b}_q(t(-\log t)^{-p/d})  \quad \text{as} \quad t \to 0.
	 \end{equation}
	Define
	\begin{equation*}
	 f(x) = \int_{|x|}^1 u^{-d/p} b(u) \frac{du}{u}, \qquad 0 < |x| < 1,
	 \end{equation*}
	 and $f(x)=0$ otherwise. Then
	 \begin{align*}
	t^{-d/p} \Big(\int_0^{t^d} (u^{1/p}f^{\# *}(u))^q \, \frac{du}{u} \Big)^{1/q} & \asymp t^{-d/p} \Big(\int_0^{t^d} (u^{1/r} |\nabla f|^*(u))^q \, \frac{du}{u} \Big)^{1/q} \\
	& \hspace{1cm}+ \sup_{t^d < u < \infty} u^{1/d} |\nabla f|^*(u)
	\end{align*}
	for $t$ sufficiently small.

	\end{prop}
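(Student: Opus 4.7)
The approach is to exploit the explicit radial structure of $f$, reduce both sides of the claimed equivalence to elementary quantities involving only the slowly varying function $b$, and invoke Theorem \ref{ThmBDS} to transfer information from $(f-f_{Q_0})^*$ to $f^{\#*}$.

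The first step is to compute the relevant rearrangements. Since $f(x) = g(|x|)$ with $g(r) = \int_r^1 u^{-d/p-1} b(u)\,du$, one has $|\nabla f(x)| = |x|^{-d/p-1} b(|x|)\chi_{B(0,1)}(x)$. Both $f$ and $|\nabla f|$ are radial and (asymptotically) decreasing in $|x|$, so radial rearrangement combined with the standard asymptotic $\int_r^1 u^{-\alpha-1}b(u)\,du \asymp \alpha^{-1}r^{-\alpha}b(r)$ as $r\to 0^+$ for slowly varying $b$ (cf.\ \cite{Bingham}) yields
\[
f^*(u) \asymp u^{-1/p} b(u^{1/d}), \qquad |\nabla f|^*(u) \asymp u^{-1/p-1/d} b(u^{1/d})
\]
for small $u$. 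Plugging this into the right-hand side of the claimed equivalence and noting that $1/r = 1/p + 1/d$, the integral term reduces, via the substitution $v = u^{1/d}$, to $t^{-d/p}\bar{b}_q(t)$, whereas the supremum $\sup_{t^d < u} u^{-1/p} b(u^{1/d})$ is attained at $u = t^d$ and equals $t^{-d/p} b(t)$. Since $b(t) = o(\bar{b}_q(t))$ as $t\to 0^+$ by the Karamata theory for slowly varying functions (as in the proof of Proposition \ref{ThmSharpnessAssertion}), the right-hand side is equivalent to $t^{-d/p}\bar{b}_q(t)$.

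The $\lesssim$ direction of the claim is then immediate from Theorem \ref{ThmDeVoreDer}. For the matching lower bound, I fix a cube $Q_0$ with $\mathrm{supp}\,f \subset Q_0$ and apply the Euclidean analogue of Theorem \ref{ThmBDS} stated in Remark \ref{Rem1}(i), with parameter $t^d$ in place of $t$:
\[
\int_0^{t^d(1-\log t^d)^{-p}} \bigl(u^{1/p}((f-f_{Q_0})\chi_{Q_0})^*(u)\bigr)^q \frac{du}{u} \lesssim \int_0^{t^d}\bigl(u^{1/p}f^{\#*}(u)\bigr)^q \frac{du}{u}.
\]
Because $f_{Q_0}$ is a constant, $((f-f_{Q_0})\chi_{Q_0})^*(u) \asymp f^*(u)$ for $u$ small, and the asymptotic for $f^*$ evaluates the left-hand side to $\asymp\bar{b}_q\bigl(t(1-\log t)^{-p/d}\bigr)^q$. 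Hypothesis \eqref{ThmDeVoreDerSharpnessAssertion1} then upgrades this to $\asymp\bar{b}_q(t)^q$, which after the $t^{-d/p}$ normalization matches the right-hand side and closes the $\gtrsim$ inequality.

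The main obstacle, and the reason hypothesis \eqref{ThmDeVoreDerSharpnessAssertion1} is imposed, is the logarithmic contraction of the integration interval produced by Theorem \ref{ThmBDS}: $(0,t^d)$ is replaced by $(0,t^d(1-\log t^d)^{-p})$, which at the level of the slowly varying function $\bar{b}_q$ amounts to the shift $\bar{b}_q(t) \mapsto \bar{b}_q(t(-\log t)^{-p/d})$. Condition \eqref{ThmDeVoreDerSharpnessAssertion1} is precisely what is needed to absorb this shift; the endpoint $q=\infty$ is handled with the usual modifications.
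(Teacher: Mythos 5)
Your proof is correct and follows essentially the same route as the paper: compute $f^*$ and $|\nabla f|^*$ via Karamata asymptotics to reduce the right-hand side to $t^{-d/p}\bar{b}_q(t)$, get the upper bound from Theorem~\ref{ThmDeVoreDer}, and get the matching lower bound from the rearrangement inequality of Theorem~\ref{ThmBDS} (in the form \eqref{757575}) together with hypothesis \eqref{ThmDeVoreDerSharpnessAssertion1} to absorb the logarithmic contraction. The only cosmetic difference is that the paper fixes $Q_0=[-1,1]^d$ explicitly rather than speaking of a cube containing the support, but the argument is identical.
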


Regarding condition \eqref{ThmDeVoreDerSharpnessAssertion1} and examples of
functions $b$, see  Remark \ref{remark-slowly-var}(i) and (ii).

\begin{proof}[Proof of Proposition \ref{ThmDeVoreDerSharpnessAssertion}]
	We define $f_0 : [0,\infty) \to [0, \infty)$ by
	\begin{equation*}
	f_0(t) = \int_t^1 u^{-d/p} b(u) \frac{du}{u}, \qquad t \in (0, 1),
	\end{equation*}
	and $f_0(t)=0$ otherwise. Noting that $f(x) = f_0(|x|)$ and $f_0$ is a decreasing function, it is readily seen that $f^*(t) = f_0 \left((t/\omega_d)^{1/d} \right)$, where $\omega_d$ denotes the volume of the $d$-dimensional unit ball. It follows from basic properties of slowly varying functions that
	\begin{equation*}
		f^*(t) \asymp t^{-1/p} b(t^{1/d}), \qquad t \in (0,1/2).
	\end{equation*}
	Further, elementary computations yield that
	\begin{equation*}
	|\nabla f|^*(t) \asymp t^{-1/p -1/d} b(t^{1/d}), \qquad t \in (0,1).
	\end{equation*}
	Assume $q < \infty$. If $t$ is sufficiently small then
	\begin{equation*}
		 t^{-d/p} \Big(\int_0^{t^d} (u^{1/r} |\nabla f|^*(u))^q \, \frac{du}{u} \Big)^{1/q}  + \sup_{t^d < u < \infty} u^{1/d} |\nabla f|^*(u) \asymp t^{-d/p} \bar{b}_q(t),
	\end{equation*}
	where we have used the fact that $\lim_{t \to 0 +} \frac{\bar{b}_q(t)}{b(t)} = \infty$ (see \cite[Propositions 1.3.6(ii) and 1.5.9b]{Bingham}). On the other hand, applying Theorem \ref{ThmBDS} (see also \eqref{757575} with $Q_0 = [-1,1]^d$) and making use of assumption \eqref{ThmDeVoreDerSharpnessAssertion1},
	\begin{equation*}
	t^{-d/p} \Big(\int_0^{t^d} (u^{1/p} f^{\#*}(u))^q \frac{du}{u} \Big)^{1/q}  \gtrsim t^{-d/p} \bar{b}_q(t (1-\log t)^{-p/d}) \gtrsim t^{-d/p} \bar{b}_q(t).
	\end{equation*}
	Combining the above results and
\eqref{ThmDeVoreDer<}, we complete the proof.
 The case $q=\infty$ is easier and we omit further details.
\end{proof}

\bigskip
\section{Proof of Corollary \ref{TheoremSharpLimiting} and its optimality}

\begin{proof}[Proof of Corollary \ref{TheoremSharpLimiting}]
 According to Theorem \ref{ThmBDS}, there exists $s$ such that  the sharp maximal function $M^{\#}_{s,Q_0;w}f$ obeys the estimate
\begin{equation*}
		\int_0^{t (1-\log t)^{-p}} ((f-f_{Q_0;w})^{*}_w(u))^p \, du \lesssim \int_0^{t}  ((M^{\#}_{s,Q_0;w}f)_w^*(u))^p \, du, \quad
	\end{equation*}
and thus, applying  Hardy's inequality, we derive
\begin{align*}
	\bigg(\int_0^1 t^{-q/p} (1 - \log t)^{b q} \Big(\int_0^{t (1-\log t)^{-p}} ((f - f_{Q_0;w})_w^{*}(u))^p \, du \Big)^{q/p} \frac{dt}{t} \bigg)^{1/q} & \\
	& \hspace{-8cm}\lesssim\bigg(\int_0^1 t^{-q/p} (1 - \log t)^{b q} \Big(  \int_0^{t}  ((M^{\#}_{s,Q_0;w}f)_w^*(u))^p \, du\Big)^{q/p} \frac{dt}{t} \bigg)^{1/q} \\
	& \hspace{-8cm} \lesssim \bigg( \int_0^1 ((1-\log t)^{b} (M^{\#}_{s,Q_0;w}f)_w^*(t))^q \frac{dt}{t} \bigg)^{1/q}.
\end{align*}
Next we show that
\begin{align}
	\bigg(\int_0^1 (1 - \log t)^{b q} \Big(\sup_{t <u <1} (1-\log u)^{-1} (f - f_{Q_0;w})_w^*(u) \Big)^{q} \frac{dt}{t} \bigg)^{1/q} & \lesssim \nonumber \\
	&\hspace{-8cm} \bigg(\int_0^1 t^{-q/p} (1 - \log t)^{b q} \Big(\int_0^{t (1-\log t)^{-p}} ((f-f_{Q_0;w})^{*}_w(u))^p \, du \Big)^{q/p} \frac{dt}{t} \bigg)^{1/q}\label{71}
\end{align}
and
\begin{equation}\label{72}
	\bigg( \int_0^1 ((1-\log t)^{b} (M^{\#}_{s,Q_0;w}f)_w^*(t))^q \frac{dt}{t} \bigg)^{1/q} \lesssim \bigg( \int_0^1 ((1-\log t)^{b} (f_{Q_0}^{\#})^*_w(t))^q \frac{dt}{t} \bigg)^{1/q}.
\end{equation}
Note that in the right-hand side of \eqref{72} we work with the  function $f^{\#}_{Q_0}$ taken with respect to the Lebesgue measure.

Using monotonicity properties and changing the order of summation, we infer that
\begin{align*}
	\bigg(\int_0^1 (1 - \log t)^{b q} \Big(\sup_{t < u < 1} (1-\log u)^{-1} (f- f_{Q_0;w})_w^*(u) \Big)^{q} \frac{dt}{t} \bigg)^{1/q} & \asymp  \\
	& \hspace{-9cm} \bigg(\sum_{j=0}^\infty 2^{j(b+1/q) q} \Big(\sup_{\nu = 0, \ldots, j} 2^{-\nu} (f- f_{Q_0;w})_w^*(2^{-2^\nu})\Big)^q \bigg)^{1/q} \\
	&\hspace{-9cm}  \leq \bigg(\sum_{j=0}^\infty 2^{j(b+1/q) q}  \sum_{\nu=0}^j 2^{-\nu q} ((f- f_{Q_0;w})_w^*(2^{-2^\nu}))^q  \bigg)^{1/q} \\
	&\hspace{-9cm}  \asymp \bigg(\sum_{\nu=0}^\infty 2^{\nu((b-1) q + 1)} ((f- f_{Q_0;w})_w^*(2^{-2^\nu}))^q   \bigg)^{1/q} \\
	&\hspace{-9cm}  \asymp \bigg(\int_0^1 ((1-\log t)^{b-1} (f- f_{Q_0;w})_w^*(t))^q \frac{dt}{t} \bigg)^{1/q} \\
	&\hspace{-9cm}  \lesssim \bigg(\int_0^1 t^{-q/p} (1-\log t)^{b q} \Big(\int_0^{t(1-\log t)^{-p}} ((f- f_{Q_0;w})_w^*(u))^p \, du \Big)^{q/p}  \frac{dt}{t}  \bigg)^{1/q}.
\end{align*}
The proof of \eqref{71} is complete.

Concerning \eqref{72}, we first observe that, by the $A_\infty(Q_0)$-condition,  there are $\delta, C>0$ such that
\begin{equation}\label{anew}
	f^*_w(t) \leq C f^*(t^{1/\delta})
\end{equation}
for all measurable functions $f$ on $Q_0$. 
In particular, this yields $$M^{\#}_{s,Q_0;w}f (x) \leq C M^{\#}_{\frac{(s w(Q_0))^{1/\delta}}{|Q_0|_d},Q_0}f (x).$$ On the other hand, it is plain to see that $M^{\#}_{s,Q_0}f (x) \leq (s |Q_0|_d)^{-1} f^{\#}_{Q_0} (x)$. Combining these two inequalities, we arrive at $M^{\#}_{s,Q_0;w}f (x) \lesssim f^{\#}_{Q_0} (x)$
and thus \eqref{72} follows.
\end{proof}

Our next result establishes the optimality of Corollary \ref{TheoremSharpLimiting}.

\begin{prop}\label{PropOptimFS}
	Let $0 < q \leq \infty$ and $b < -1/q$. Assume $w \in A_\infty(Q_0)$ and $f \in L_p(Q_0,w)$ for some $1 < p < \infty$. Then
	\begin{equation}\label{As1}
			\left(\int_0^1 (1 - \log t)^{b q} \left(\int_t^1 (u^{1/p} (1-\log u)^{\xi} (f-f_{Q_0;w})_w^*(u))^r \frac{du}{u} \right)^{q/r} \frac{dt}{t} \right)^{1/q}  \lesssim \|f^{\#}_{Q_0}\|_{L_{\infty,q}(\log L)_b(Q_0,w)}
			\end{equation}
			if and only if
			\begin{equation*}
                           \left\{\begin{array}{lcl}
                            p < \infty, & r \leq \infty,  & -\infty < \xi < \infty, \\
                            & & \\
                            p=\infty,& r < \infty, & \xi < -1 - \frac{1}{r}, \\
                            & & \\
                            p=\infty, & r= \infty, & \xi \leq -1.

            \end{array}
           \right.
	\end{equation*}
\end{prop}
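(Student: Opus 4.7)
The plan is to establish both directions of the equivalence separately, with Corollary \ref{TheoremSharpLimiting} and Theorem \ref{ThmBDS} as the main ingredients. For sufficiency I would split into three subcases matching the three branches. When $p < \infty$, I exploit the monotonicity of the inner integral $F(t) = \int_t^1 (u^{1/p}(1-\log u)^\xi(f-f_{Q_0;w})^*_w(u))^r\,du/u$ as a decreasing function of $t$, bounding it by $F(0^+) = \|f-f_{Q_0;w}\|_{L_{p,r}(\log L)_\xi(Q_0,w)}^r$ (finite for any $\xi$ because the factor $u^{r/p}$ absorbs log singularities at $u=0$). Combined with $\int_0^1 (1-\log t)^{bq}\,dt/t<\infty$ (as $b<-1/q$), this reduces the LHS of \eqref{As1} to a constant times $\|f-f_{Q_0;w}\|_{L_{p,r}(\log L)_\xi(Q_0,w)}$. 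I then invoke the Fefferman--Stein inequality on the r.i.\ space $L_{p,r}(\log L)_\xi$ (its lower Boyd index is $1/p>0$, cf.~\cite{AstashkinMilman,Lerner}) and close with the elementary embedding $L_{\infty,q}(\log L)_b(Q_0,w)\hookrightarrow L_{p,r}(\log L)_\xi(Q_0,w)$, which holds for $p<\infty$ and any $\xi$ via the pointwise Hardy-type bound $f^*_w(t)\lesssim (1-\log t)^{-b-1/q}\|f\|_{L_{\infty,q}(\log L)_b}$ together with the integrability of $t^{r/p}$ at $0$.

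For the $p=\infty$ subcases with $r<\infty$ and $\xi<-1-1/r$, I would use the pointwise bound $(1-\log u)^\xi g(u)\le(1-\log u)^{\xi+1}\sup_{v\in(t,1)}(1-\log v)^{-1}g(v)$ on $(t,1)$, where $g=(f-f_{Q_0;w})^*_w$; the coefficient $\int_t^1(1-\log u)^{(\xi+1)r}\,du/u = \int_1^{1-\log t}s^{(\xi+1)r}\,ds$ is uniformly bounded since $(\xi+1)r<-1$, so the remaining expression matches the LHS of Corollary \ref{TheoremSharpLimiting}, controlled by $\|f^{\#}_{Q_0}\|_{L_{\infty,q}(\log L)_b}$. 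For $r=\infty$ and $\xi\le-1$, the trivial inequality $(1-\log u)^\xi\le(1-\log u)^{-1}$ on $(t,1)$ (using $1-\log u\ge 1$ and $\xi+1\le 0$) reduces the LHS directly to the Corollary quantity.

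For necessity, since any $\xi$ is admissible when $p<\infty$, counterexamples are needed only for $p=\infty$. The non-endpoint ranges ($\xi>-1-1/r$ for $r<\infty$; $\xi>-1$ for $r=\infty$) are handled by $f(x)=-\log|x|$ on $Q_0=[-1,1]^d$: since $f\in\text{BMO}(Q_0)$ one has $\|f^{\#}_{Q_0}\|_{L_{\infty,q}(\log L)_b}<\infty$, whereas $(f-f_{Q_0})^*_w(u)\asymp-\log u$ makes the inner quantity grow like $(1-\log t)^{(\xi+1)r+1}$ (or $(1-\log t)^{\xi+1}$ when $r=\infty$), forcing divergence of the LHS for suitable $b$ close to $-1/q$.

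The main obstacle is the critical endpoint $\xi=-1-1/r$ with $r<\infty$, where $f=-\log|x|$ produces only an inner integral $\log(1-\log t)$ that is still integrable against $(1-\log t)^{bq}\,dt/t$. To witness failure I would employ the refined family $f_\varepsilon(x)=(-\log|x|)^{1+\varepsilon}$ for small $\varepsilon\in(0,-b-1/q)$: the radial sharp-maximal estimate $f^{\#}_\varepsilon(x)\asymp|x|\,|F'_\varepsilon(|x|)|=(1+\varepsilon)(-\log|x|)^\varepsilon$ gives $\|f^{\#}_\varepsilon\|_{L_{\infty,q}(\log L)_b}\asymp(-bq-\varepsilon q-1)^{-1/q}$, while substitution into \eqref{As1} at $\xi=-1-1/r$ yields LHS$(f_\varepsilon)\asymp(r\varepsilon)^{-1/r}(-bq-\varepsilon q-1)^{-1/q}$, so that the ratio LHS/RHS $\asymp(r\varepsilon)^{-1/r}\to\infty$ as $\varepsilon\to 0^+$, precluding the inequality at the endpoint. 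The main technical step is to justify these asymptotics uniformly in $\varepsilon$ and to adapt the computation to the weight $w\in A_\infty(Q_0)$ via \eqref{anew}, which comparably controls $f^*_w$ by $f^*$ up to a power and preserves the $\varepsilon^{-1/r}$ blow-up.
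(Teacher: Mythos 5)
Your sufficiency argument is essentially correct and, for $p<\infty$, takes a genuinely different route from the paper: the paper derives the whole backward implication from the logarithmic Fefferman--Stein estimate \eqref{FSLim}, whereas you shortcut the $p<\infty$ case via the Fefferman--Stein inequality on $L_{p,r}(\log L)_\xi$ (positive lower Boyd index) composed with the trivial embedding $L_{\infty,q}(\log L)_b(Q_0,w)\hookrightarrow L_{p,r}(\log L)_\xi(Q_0,w)$; both reductions are valid, and your treatment of the two $p=\infty$ branches agrees with the paper's. The necessity half, however, contains two real gaps. The first is that the single function $f(x)=-\log|x|$ does not establish failure for \emph{every} $b<-1/q$. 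For $p=\infty$, $r=\infty$, $\xi>-1$, its contribution to the left-hand side is $\asymp\bigl(\int_0^1(1-\log t)^{(b+\xi+1)q}\,dt/t\bigr)^{1/q}$, which is finite whenever $b<-1/q-(\xi+1)$; the analogous computation for $r<\infty$ shows the same phenomenon. This is exactly why the paper uses the $\beta$-parameterized profiles $f_0(t)=\int_t^1(1-\log u)^\beta\,du/u$ with $\max\{-b-1/q-\xi-1,0\}<\beta<-b-1/q$, a window that depends on $b$ and is nonempty precisely when $\xi>-1$.

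The second gap is in the endpoint $\xi=-1-1/r$, $r<\infty$: the family $f_\varepsilon(x)=(-\log|x|)^{1+\varepsilon}$ does \emph{not} produce the claimed $(r\varepsilon)^{-1/r}$ blow-up. After the substitution $s=1-\log t$, the inner integral equals $\tfrac{s^{\varepsilon r}-1}{\varepsilon r}$ and the $q$-th power of the left-hand side becomes $\int_1^\infty s^{bq}\bigl(\tfrac{s^{\varepsilon r}-1}{\varepsilon r}\bigr)^{q/r}ds$. Because $\tfrac{s^{\varepsilon r}-1}{\varepsilon r}\le(\log s)s^{\varepsilon r}$, this tends, as $\varepsilon\to0^+$, to the finite constant $\int_1^\infty s^{bq}(\log s)^{q/r}\,ds$: the subtraction of $1$ kills the prospective $\varepsilon^{-1/r}$ factor on the range $s\lesssim e^{1/(\varepsilon r)}$ where $s^{\varepsilon r}\asymp1$, while the range $s>e^{1/(\varepsilon r)}$ contributes only $(\varepsilon r)^{-q/r}e^{(bq+1)/(\varepsilon r)}\to0$. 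Since your right-hand side is also bounded in $\varepsilon$, no contradiction is obtained. The paper instead works with the truncated logarithms $f_j$ (with $f_j^*(u)\asymp(-\log u)\chi_{(0,2^{-j})}(u)$); via the gradient bound \eqref{key} it shows $\|f_j^{\#}\|_{L_{\infty,q}(\log L)_b(Q_0)}\lesssim j^{b+1/q}$ while the left-hand side is $\gtrsim j^{b+1/q}(\log j)^{1/r}$, and the factor $(\log j)^{1/r}$ diverges as $j\to\infty$. Finally, once the endpoint fails, all $\xi>-1-1/r$ fail automatically because $(1-\log u)^\xi$ is increasing in $\xi$ on $(0,1)$, so the non-endpoint case for $r<\infty$ needs no separate construction.
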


%
%
%
%

\begin{proof}
The backward implication follows from \eqref{FSLim} and simple computations.
The 
  proof of the forward implication with $w = |\cdot|_d$ hinges on the following estimate
\begin{equation}\label{key}
	\|f^{\#}_{Q_0}\|_{L_{\infty,q}(\log L)_b(Q_0)} \lesssim \sum_{l=0}^1  \bigg(\int_0^1 \Big((1-\log t)^b \sup_{t < u < 1} u^{1/d} |\nabla^l f|^*(u) \Big)^{q} \frac{dt}{t} \bigg)^{1/q}.
	\end{equation}
To show \eqref{key}, we invoke \eqref{ThmDeVoreDer<Cubes} to derive
	\begin{equation*}
		f^{\#*}_{Q_0}(t) \lesssim \sum_{l=0}^1 \Big[t^{-1/p_0} \Big(\int_0^{t} (u^{1/r} |\nabla^l f|^*(u))^{p_0} \, \frac{du}{u} \Big)^{1/p_0} + \sup_{t < u < 1} u^{1/d} |\nabla^l f|^*(u) \Big],
	\end{equation*}
	where $p_0' < d$ and $r = \frac{d p_0}{d+p_0}$. Thus, applying  Hardy's inequality we conclude that
	\begin{align*}
		\|f^{\#}_{Q_0}\|_{L_{\infty,q}(\log L)_b(Q_0)}  & \lesssim \sum_{l=0}^1 \bigg[ \bigg(\int_0^1 t^{-q/p_0} (1-\log t)^{b q}  \Big(\int_0^{t} (u^{1/r} |\nabla^l f|^*(u))^{p_0} \, \frac{du}{u} \Big)^{q/p_0} \frac{dt}{t} \bigg)^{1/q} \\
		& \hspace{1cm}+ \bigg(\int_0^1 \Big((1-\log t)^b \sup_{t < u < 1} u^{1/d} |\nabla^l f|^*(u) \Big)^{q} \frac{dt}{t} \bigg)^{1/q} \bigg] \\
		& \asymp \sum_{l=0}^1 \bigg[ \bigg(\int_0^1 (t^{1/d} (1-\log t)^b |\nabla^l f|^*(t))^q \frac{dt}{t} \bigg)^{1/q} \\
		& \hspace{1cm}+ \bigg(\int_0^1 \Big((1-\log t)^b \sup_{t < u < 1} u^{1/d} |\nabla^l f|^*(u) \Big)^{q} \frac{dt}{t} \bigg)^{1/q} \bigg]  \\
		& \asymp \sum_{l=0}^1  \bigg(\int_0^1 \Big((1-\log t)^b \sup_{t < u < 1} u^{1/d} |\nabla^l f|^*(u) \Big)^{q} \frac{dt}{t} \bigg)^{1/q}.
	\end{align*}

Assume that \eqref{As1} holds.
There is no loss of generality in fixing $Q_0 = [-\frac{1}{2},\frac{1}{2}]^d$. Firstly, we suppose $p=\infty$ and $r=\infty$. Then we will prove that the condition $\xi \leq -1$ is necessary if \eqref{As1} holds. 
Indeed, if $\xi > -1,$ then we choose $\beta$ such that $\max\{-b-1/q-\xi-1, 0\} < \beta < -b-1/q$ and define $f(x) = f_0(|x|)$, where
	\begin{equation*}
	f_0(t) = \int_t^1 (1-\log u)^\beta \frac{du}{u}, \qquad t \in (0, 1/2),
	\end{equation*}
	and $f_0(t)=0$ otherwise. Elementary computations yield that
	\begin{equation*}
		f^*(t) \asymp (1-\log t)^{\beta + 1} \quad \text{and} \quad  |\nabla f|^*(t) \asymp t^{-1/d} (1-\log t)^\beta
	\end{equation*}
	for $t \in (0,1)$. According to \eqref{key}, we have
	\begin{align*}
		\|f^{\#}_{Q_0}\|_{L_{\infty,q}(\log L)_b(Q_0)} & \lesssim  \bigg(\int_0^1 \Big((1-\log t)^b \sup_{t < u < 1} u^{1/d} (1- \log u)^{\beta + 1} \Big)^{q} \frac{dt}{t} \bigg)^{1/q}  \\
		& \hspace{1cm}+   \bigg(\int_0^1 \Big((1-\log t)^b \sup_{t < u < 1} (1-\log u)^\beta \Big)^{q} \frac{dt}{t} \bigg)^{1/q} < \infty
	\end{align*}
	but
	\begin{align*}
		\left(\int_0^1 (1 - \log t)^{b q} \Big(\sup_{t < u <1} (1-\log u)^{\xi} (f-f_{Q_0})^*(u) \Big)^{q} \frac{dt}{t} \right)^{1/q}
= \infty,
	\end{align*}
which contradicts \eqref{As1}.
	
	Secondly, we will show that \eqref{As1} with $p=\infty$ and $r < \infty$ implies $\xi < -1-1/r$. We again argue  by contradiction. Assume that \eqref{As1} holds with $p=\infty, r < \infty$ and $\xi = -1-1/r$. For each $j \in \N$, we consider the function $f_j(x) = f_j(|x|), \, x \in Q_0,$ given by
\begin{equation*}
	f_j(t) = \int_t^1 \frac{du}{u}, \qquad t \in (0, 2^{-j}),
	\end{equation*}
	and $f_j(t)=0$ otherwise.
Denoting by
 $ \chi_{(0, 2^{-j})}$  the characteristic function of the set $(0, 2^{-j})$,
it is readily seen that
	\begin{equation*}
		f_j^*(t) \asymp (-\log t) \chi_{(0, 2^{-j})}(t) \quad \text{and} \quad  |\nabla f_j|^*(t) \asymp t^{-1/d}  \chi_{(0, 2^{-j})}(t)
	\end{equation*}
	uniformly with respect to $j$.
 Therefore, we obtain
	\begin{align*}
		\left(\int_0^1 (1 - \log t)^{b q} \left(\int_t^1 ((1-\log u)^{-1-1/r} (f_j-f_{j;Q_0})^*(u))^r \frac{du}{u} \right)^{q/r} \frac{dt}{t} \right)^{1/q} & \gtrsim \\
		& \hspace{-7cm} \left(\int_0^{2^{-j}} (- \log t)^{b q} \bigg(\int_t^{2^{-j}} \frac{du}{u (-\log u)} \bigg)^{q/r} \frac{dt}{t} \right)^{1/q} \\
		&  \hspace{-7cm} \gtrsim \left(\int_0^{2^{-j}} (-\log t)^{b q} (\log (-\log t))^{q/r} \frac{dt}{t} \right)^{1/q} \asymp j^{b+1/q} (\log j)^{1/r}
	\end{align*}
	and (cf. \eqref{key})
	\begin{align*}
		\|(f_j)^{\#}_{Q_0}\|_{L_{\infty,q}(\log L)_b(Q_0)} & \lesssim 2^{-j/d} j \bigg(\int_0^{2^{-j}} (-\log t)^{b q} \frac{dt}{t} \bigg)^{1/q} +  \bigg(\int_0^{2^{-j}} (-\log t)^{b q} \frac{dt}{t} \bigg)^{1/q} \\
		& \asymp j^{b+1/q} (2^{-j/d} j + 1) \asymp j^{b+1/q}.
	\end{align*}
	Combining these estimates with \eqref{As1}, we arrive at $j^{b+1/q} (\log j)^{1/r} \lesssim j^{b+1/q}$,
	which leads to a contradiction. Furthermore, the failure of \eqref{As1} with $p=\infty, r < \infty$, and $\xi > -1-1/r$ can be obtained from the previous case using the trivial estimates.
	
	The general case $w \in A_\infty(Q_0)$ can be reduced to the Lebesgue setting using \eqref{anew} and a simple change of variables.
	Further details are left to the reader.

\end{proof}

\bigskip
\section{Proofs of Corollaries \ref{CorollaryLimitingBesovMax}, \ref{CorollaryLimitingBesovMaxSecond}, \ref{CorDeVoreExtrapol}, and \ref{CorDeVoreDerExtrapol} and their optimality}

	\begin{proof}[Proof of Corollary \ref{CorDeVoreExtrapol}]
		We concentrate only on \eqref{CorDeVoreExtrapol1cube} and leave to the reader the proofs of \eqref{CorDeVoreExtrapol1} and \eqref{CorDeVoreExtrapol2}. Without loss of generality, we may assume $0 < \varepsilon < d/p$. Suppose $r < \infty$. If $k > d/p$, then we apply monotonicity properties of $f^{\#*}_{Q_0}$, \eqref{ThmDeVore*LorentzCubes} and Fubini's theorem to derive
	\begin{align*}
		\|f^{\#}_{Q_0}\|_{L_{d/\varepsilon, r}(Q_0)} & \lesssim \bigg(\int_0^1 t^{(\varepsilon -d/p) r} \Big(\int_0^{t^d} (u^{1/p} f^{\#*}_{Q_0}(u))^q \, \frac{du}{u} \Big)^{r/q} \frac{dt}{t} \bigg)^{1/r} \\
		& \lesssim  \Big(\int_0^1 t^{\varepsilon r} \frac{dt}{t} \Big)^{1/r} \|f\|_{L_{p,q}(Q_0)} +  \Big(\int_0^1 t^{\varepsilon r} \int_t^1 (u^{-d/p} \omega_k(f,u)_{p, q})^r \frac{du}{u} \frac{dt}{t} \Big)^{1/r} \\
		& \asymp \varepsilon^{-1/r} \|f\|_{L_{p,q}(Q_0)} + \Big(\int_0^1 (u^{-d/p} \omega_k(f,u)_{p, q})^r \int_0^{u} t^{\varepsilon r} \frac{dt}{t} \frac{du}{u} \Big)^{1/r} \\
		& \asymp \varepsilon^{-1/r} \|f\|_{B^{d/p-\varepsilon}_r L_{p,q}(Q_0);k}.
	\end{align*}
	
	Let $k= d/p$. In light of \eqref{ThmDeVore*Lorentz2Cubes}, we have $f^{\#*}_{Q_0} (t^d) \lesssim \|f\|_{L_{p,q}(Q_0)} + t^{-d/p} \omega_k(f,t)_{p,q}$. Therefore,
	\begin{equation*}
		\|f^{\#}_{Q_0}\|_{L_{d/\varepsilon, r}(Q_0)} \lesssim \varepsilon^{-1/r} \|f\|_{L_{p,q}(Q_0)} +  \Big(\int_0^1 (t^{\varepsilon-d/p} \omega_k(f,t)_{p,q})^r \frac{dt}{t} \Big)^{1/r} \leq \varepsilon^{-1/r} \|f\|_{B^{d/p-\varepsilon}_r L_{p,q}(Q_0); k},
	\end{equation*}
	that is, \eqref{CorDeVoreExtrapol1cube} holds true.

	The case $r=\infty$ can be treated similarly. We omit the details.
	\end{proof}

\begin{proof}[Proof of Corollary \ref{CorollaryLimitingBesovMax}]
Assume $r < \infty$. Let $J \in \N$ be such that $2^{-J} < 1/p$. Setting $\varepsilon= 2^{-j} d, \, j \geq J$, in \eqref{CorDeVoreExtrapol1cube}, we have
	\begin{equation*}
		 2^{j b} \|f^{\#}_{Q_0}\|_{L_{2^j, r}(Q_0)} \leq C  2^{j (b+1/r)} \|f\|_{B^{d(1/p-2^{-j})}_r L_{p, q}(Q_0); k},
	\end{equation*}
	where $C > 0$ is independent of $j$. Therefore,
	\begin{equation}\label{ProofCorollaryLimitingBesovMax1}
		\Big(\sum_{j=J}^\infty  ( 2^{j b} \|f^{\#}_{Q_0}\|_{L_{2^j, r}(Q_0)})^r \Big)^{1/r} \lesssim \Big(\sum_{j=J}^\infty  (2^{j (b+1/r)} \|f\|_{B^{d(1/p-2^{-j})}_r L_{p, q}(Q_0); k})^r \Big)^{1/r}.
	\end{equation}
	Applying Fubini's theorem yields
	\begin{equation}\label{ProofCorollaryLimitingBesovMax2}
	\sum_{j=J}^\infty  2^{j b r} \|f^{\#}_{Q_0}\|_{L_{2^j, r}(Q_0)}^r = \int_0^1 V(t) ((f^{\#}_{Q_0})^*(t))^r \frac{dt}{t}
	\end{equation}
and, since $b < -1/r$,
	\begin{equation}\label{ProofCorollaryLimitingBesovMax3}
		\sum_{j=J}^\infty  2^{j (b+1/r) r} \|f\|_{B^{d(1/p-2^{-j})}_r L_{p, q}(Q_0); k}^r \asymp \|f\|_{L_{p,q}(Q_0)}^r +  \int_0^1 t^{-d r/p} W(t) \omega_k (f,t)_{p,q}^r \frac{dt}{t},
	\end{equation}
	where $V(t) = \sum_{j=J}^\infty 2^{j b r} t^{2^{-j} r}$ and $W(t) = \sum_{j=J}^\infty 2^{j(b+1/r) r} t^{2^{-j} d r}$. Next we estimate $V(t)$ and $W(t)$. For a fixed $t \in (0,1)$, changing variables, we  obtain
	\begin{equation}\label{0030303}
		V(t) \asymp \int^{2^{-J}}_0 t^{\sigma  r} \sigma^{-b r}  \frac{d \sigma}{\sigma} = (-\log t)^{b r} \int_0^{2^{-J} (-\log t)} e^{-\sigma r} \sigma^{-b r} \frac{d \sigma}{\sigma} \asymp  (-\log t)^{b r},
	\end{equation}
 and, similarly, $W(t) \asymp (-\log t)^{(b+1/r)r}$. Inserting these estimates into \eqref{ProofCorollaryLimitingBesovMax2} and \eqref{ProofCorollaryLimitingBesovMax3}, we have
	\begin{equation*}
		\sum_{j=J}^\infty  2^{j b r} \|f^{\#}_{Q_0}\|_{L_{2^j, r}(Q_0)}^r \asymp  \|f^{\#}_{Q_0}\|_{L_{\infty,r} (\log L)_b(Q_0)}^r
	\end{equation*}
	and
	\begin{align*}
		\sum_{j=J}^\infty  2^{j (b+1/r) r} \|f\|_{B^{d(1/p-2^{-j})}_r L_{p, q}(Q_0); k}^r & \asymp \|f\|_{L_{p,q}(Q_0)}^r +  \int_0^1 t^{-d r/p} (-\log t)^{(b+1/r) r} \omega_k(f,t)_{p,q}^r \frac{dt}{t} \\
		& \hspace{-4cm}\asymp \|f\|^r_{B_r^{d/p, b+ 1/r} L_{p,q}(Q_0);k}.
	\end{align*}
Thus, by \eqref{ProofCorollaryLimitingBesovMax1}, we conclude the proof of \eqref{CorollaryLimitingBesovMax1New*}.
	
	The case $r=\infty$ can be done in a similar way.
	
	\end{proof}

The optimal statement of Corollary \ref{CorollaryLimitingBesovMax} reads as follows.

\begin{prop}\label{CorollaryLimitingBesovMaxOptimal}
Let $1 < p < \infty, 0 < q, r \leq \infty, k > d/p, b < -1/r$, and $-\infty < \xi < \infty$. Then
			\begin{equation*}
			\|f^{\#}_{Q_0}\|_{L_{\infty, r} (\log L)_b (Q_0)} \lesssim \|f\|_{B^{d/p, b+\xi}_{r} L_{p,q}(Q_0);k} \iff \xi \geq 1/r.
			\end{equation*}
\end{prop}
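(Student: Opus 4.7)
The sufficiency direction ($\xi \geq 1/r \Rightarrow$ inequality) is immediate: the case $\xi = 1/r$ is precisely Corollary \ref{CorollaryLimitingBesovMax}, and for $\xi > 1/r$ the pointwise bound $(1-\log t)^{b+\xi} \geq (1-\log t)^{b+1/r}$ on $(0,1)$ yields $\|f\|_{B^{d/p, b+1/r}_r L_{p,q}(Q_0);k} \leq \|f\|_{B^{d/p, b+\xi}_r L_{p,q}(Q_0);k}$, after which Corollary \ref{CorollaryLimitingBesovMax} closes the argument.

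For the necessity, my strategy is to argue by contradiction and reduce to the sharpness of the extrapolation estimate (Proposition \ref{CorDeVoreExtrapolSharp}). Assuming the inequality with some $\xi$, I will show
\begin{equation*}
\|f^{\#}_{Q_0}\|_{L_{d/\varepsilon, r}(Q_0)} \lesssim \varepsilon^{-\xi} \|f\|_{B^{d/p-\varepsilon}_r L_{p,q}(Q_0); k}, \qquad \varepsilon \to 0+,
\end{equation*}
which by Proposition \ref{CorDeVoreExtrapolSharp} forces $\xi \geq 1/r$.

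The mechanism is the dyadic decomposition used in the proof of Corollary \ref{CorollaryLimitingBesovMax}, in which both norms in the hypothesized inequality are expressed as weighted dyadic sums (see \eqref{ProofCorollaryLimitingBesovMax2}--\eqref{0030303}):
\begin{equation*}
\|f^{\#}_{Q_0}\|^r_{L_{\infty, r}(\log L)_b(Q_0)} \asymp \sum_{j \geq J} 2^{jbr}\|f^{\#}_{Q_0}\|^r_{L_{2^j, r}(Q_0)},
\end{equation*}
\begin{equation*}
\|f\|^r_{B^{d/p, b+\xi}_r L_{p,q}(Q_0); k} \asymp \|f\|^r_{L_{p,q}(Q_0)} + \sum_{j \geq J} 2^{j(b+\xi)r}\|f\|^r_{B^{d/p - d 2^{-j}}_r L_{p,q}(Q_0); k}.
\end{equation*}
Tested against a family $\{f_J\}$ chosen so that only the dyadic scale $j = J$ contributes nontrivially to each sum, the hypothesized inequality collapses to the single-term estimate $2^{Jbr} \|f_J^{\#}\|^r_{L_{2^J, r}(Q_0)} \lesssim 2^{J(b+\xi)r} \|f_J\|^r_{B^{d/p - d 2^{-J}}_r L_{p,q}(Q_0); k}$, which after setting $\varepsilon = d\,2^{-J}$ is precisely the extrapolation estimate with exponent $\xi$ in place of $1/r$.

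Natural candidates for such spectrally-localized $\{f_J\}$ are trigonometric polynomials with general-monotone Fourier coefficients supported on a single dyadic frequency block, for which the Besov norm and the maximal-function Lorentz norm may both be computed via the discrete characterizations in Lemmas \ref{LemModuliLorentz} and \ref{Lem3}; alternatively, one may reuse the extremal family already furnished by the proof of Proposition \ref{CorDeVoreExtrapolSharp}. The main obstacle is this spectral-isolation step: one must verify that for the chosen $f_J$ the Besov norm on the right is genuinely dominated by the dyadic contribution at $j = J$ (rather than by the $L_{p,q}(Q_0)$-term or off-diagonal contributions), and that correspondingly $\|f_J^{\#}\|_{L_{\infty, r}(\log L)_b(Q_0)}$ is dominated by $2^{Jb}\|f_J^{\#}\|_{L_{2^J, r}(Q_0)}$; once this localization is justified, the reduction to Proposition \ref{CorDeVoreExtrapolSharp} is immediate.
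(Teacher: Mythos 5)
Your sufficiency direction is correct and matches the paper: Corollary \ref{CorollaryLimitingBesovMax} covers $\xi = 1/r$, and for $\xi > 1/r$ the monotonicity of $(1-\log t)^{b+\xi}$ in $\xi$ on $(0,1)$ finishes.

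The necessity direction has a genuine gap, and the difficulty is structural, not merely technical. First, there is a circularity issue: you invoke Proposition \ref{CorDeVoreExtrapolSharp} as a black box, but the paper derives Proposition \ref{CorDeVoreExtrapolSharp} \emph{from} Proposition \ref{CorollaryLimitingBesovMaxOptimal} (by extrapolating the per-$\varepsilon$ estimate back to the aggregate Lorentz--Zygmund estimate and then applying the forward implication of the proposition you are trying to prove). So you cannot cite it here without supplying an independent proof of the extrapolation sharpness, and your ``alternatively, one may reuse the extremal family already furnished by the proof of Proposition \ref{CorDeVoreExtrapolSharp}'' does not help: no extremal family is constructed in that proof. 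Second, and more fundamentally, the reduction from the aggregate estimate to a single-scale estimate is the wrong direction of extrapolation. The dyadic expansions $\sum_j 2^{jbr}\|f^{\#}_{Q_0}\|^r_{L_{2^j,r}(Q_0)}$ and $\sum_j 2^{j(b+\xi)r}\|f\|^r_{B^{d/p-d2^{-j}}_r L_{p,q}(Q_0);k}$ are sums over all dyadic scales for the \emph{same} fixed function $f$, and on a bounded domain $Q_0$ every term in both sums is nonzero whenever $f$ is nonzero. An inequality between the aggregates does not yield the inequality term by term; a deficit at scale $J$ on the right can be compensated by a surplus at other scales. To make your strategy work you would need to exhibit a family $\{f_J\}$ that simultaneously (a) concentrates the essential mass of both dyadic sums at $j = J$ and (b) witnesses the sharpness of the per-$\varepsilon$ estimate at scale $\varepsilon \asymp 2^{-J}$; you acknowledge this ``spectral-isolation'' step as the main obstacle, but it is not a side obstacle -- it is the entire content of the proof, and you do not establish it.

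The paper avoids this problem entirely by a different method: it regards $T\colon f \mapsto f^{\#}_{Q_0}$ as a sublinear operator, observes that the hypothesized inequality gives boundedness $T\colon B^{d/p,b+\xi}_{r} L_{p,q}(Q_0) \to L_{\infty,r}(\log L)_b(Q_0)$ while the non-limiting Fefferman--Stein bound gives $T\colon B^s_r L_{p,q}(Q_0) \to L_{dp/(d-sp),r}(Q_0)$ for $0 < s < d/p$, and then real-interpolates the two. Lemmas \ref{Lemma8.2} and \ref{Lemma8.3} identify the resulting interpolation spaces as Lorentz--Zygmund and logarithmic Lorentz--Besov spaces, the r.i.\ hull characterization of \cite{Martin} converts the Besov side to a Lorentz--Zygmund space, and the Fefferman--Stein inequality of \cite{AstashkinMilman,Lerner} turns the mapping property into an embedding of Lorentz--Zygmund spaces, which holds only when $\xi \geq 1/r$. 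This route requires no test functions at all; if you want to salvage your approach you would need an explicit extremal construction achieving the spectral localization, and you should verify it is consistent with the sharpness scaling before committing to it.
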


The proof is based on the limiting interpolation technique and Fefferman--Stein inequalities for Lorentz--Zygmund spaces recently obtained in \cite{AstashkinMilman,Lerner}. In particular, we will need the following interpolation formulas for Lorentz--Zygmund spaces and Lorentz--Besov spaces.

\begin{lem}\label{Lemma8.2}
	Let $0 < p < \infty, 0 < r_0, r_1, r \leq \infty, -\infty < b_0, b < \infty, b_1 < -1/r$, and $0 < \theta < 1$. Then
	\begin{equation*}
		(L_{p,r_0}(\log L)_{b_0}(Q_0), L_{\infty, r_1}(\log L)_{b_1}(Q_0))_{\theta, r;b} = L_{p/(1-\theta), r} (\log L)_{(1-\theta) b_0 + \theta (b_1 + 1/r_1) + b}(Q_0).
	\end{equation*}
\end{lem}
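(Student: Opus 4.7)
The plan is to reduce the identity to a known classical (non-limiting) real interpolation formula for Lorentz--Zygmund spaces by first re-expressing one endpoint as a limiting interpolation space and then applying a reiteration theorem. By Lemma \ref{LemInterp1}, the endpoint with $p=\infty$ can be realized as
\begin{equation*}
A_1 := L_{\infty, r_1}(\log L)_{b_1}(Q_0) = (A_0, A_\infty)_{(1, b_1), r_1},
\end{equation*}
where $A_0 := L_{p, r_0}(\log L)_{b_0}(Q_0)$ and $A_\infty := L_\infty(Q_0)$. Since $Q_0$ has finite measure, $A_\infty \hookrightarrow A_0$, so the pair $(A_0, A_\infty)$ is ordered and the limiting construction is well defined.

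The central step will be to prove the reiteration identity
\begin{equation*}
\bigl(A_0,\, (A_0, A_\infty)_{(1, b_1), r_1}\bigr)_{\theta, r; b} = (A_0, A_\infty)_{\theta, r;\, b + \theta(b_1 + 1/r_1)}.
\end{equation*}
For this I would start from \eqref{LemInterp2.1} of Lemma \ref{LemInterp2} (applied with $q = r_1$ and with the log parameter there equal to $b_1$) to represent $K(t(1-\log t)^{-b_1-1/r_1}, f; A_0, A_1)$ as $K(t, f; A_0, A_\infty)$ plus a Hardy-type tail. Inserting this into the $(\theta, r; b)$ norm and making the slowly varying substitution $s = t(1-\log t)^{-b_1 - 1/r_1}$ (so that $\log s \asymp \log t$ and $ds/s \asymp dt/t$), the main piece is exactly the $(\theta, r;\, b + \theta(b_1+1/r_1))$ norm of $(A_0, A_\infty)$, while the tail piece is absorbed via Hardy's inequality, exploiting $\theta \in (0,1)$ together with the hypothesis $b_1 < -1/r$.

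It then remains to invoke the classical formula for real interpolation with logarithmic perturbation between a Lorentz--Zygmund space and $L_\infty$ (see, e.g., \cite{EvansOpicPick, GogatishviliOpicTrebels}):
\begin{equation*}
(A_0, A_\infty)_{\theta, r; c} = L_{p/(1-\theta), r}(\log L)_{(1-\theta) b_0 + c}(Q_0),
\end{equation*}
and specialize to $c = b + \theta(b_1 + 1/r_1)$; this produces the target logarithmic exponent $(1-\theta) b_0 + \theta(b_1 + 1/r_1) + b$, as asserted. The principal obstacle will be the reiteration step: careful bookkeeping of the slowly varying factors through the substitution, and the correct application of Hardy's inequality to the tail term, are both required. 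The hypothesis $b_1 < -1/r$ is precisely what ensures the convergence of the latter.
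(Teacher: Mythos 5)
Your proposal is correct and follows essentially the same route as the paper: reduce via Lemma \ref{LemInterp1}, reiterate to trade the limiting space for a shifted logarithmic exponent, then invoke the classical Lorentz--Zygmund interpolation formula. The only difference is cosmetic: the paper quotes the reiteration identity \eqref{ReiterationOptim2} directly (citing \cite{EvansOpicPick}), while you propose to re-derive it from the Holmstedt-type $K$-functional formula \eqref{LemInterp2.1} of Lemma \ref{LemInterp2}; both amount to the same computation.
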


\begin{lem}\label{Lemma8.3}
	Let
$1 < p < \infty, 0 < q \leq \infty,$  $0 < r_0, r_1, r \leq \infty$,
$0 < s_0 < s_1 < \infty$, $-\infty < b_0, b_1, b < \infty,$
and $0 < \theta < 1$. Then
	\begin{equation*}
		(B^{s_0,b_0}_{r_0} L_{p,q}(Q_0), B^{s_1,b_1}_{r_1} L_{p,q}(Q_0))_{\theta,r;b} = B^{(1-\theta) s_0 + \theta s_1, (1-\theta) b_0 + \theta b_1 + b}_r L_{p,q}(Q_0).
	\end{equation*}
	The corresponding formulas for homogeneous/inhomogeneous  Besov spaces on $\R^d$ also hold true.
\end{lem}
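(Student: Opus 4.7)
The plan is to realize each Lorentz--Besov space as a real interpolation space with logarithmic weight between $L_{p,q}(Q_0)$ and a Lorentz--Sobolev space, and then apply the reiteration theorem for such methods. Fix $k \in \N$ with $k > s_1$. Starting from the $K$-functional identification
\begin{equation*}
K(t^k, f;\, L_{p,q}(Q_0),\, W^k L_{p,q}(Q_0)) \asymp \omega_k(f,t)_{p,q}, \qquad t \in (0,1),
\end{equation*}
already used in \eqref{ProofLemmaEmbBMOLorentzState1**}, a straightforward change of variable $t \mapsto t^k$ in the defining integral of the logarithmically weighted interpolation norm yields
\begin{equation*}
B^{s_i, b_i}_{r_i} L_{p,q}(Q_0) = (L_{p,q}(Q_0),\, W^k L_{p,q}(Q_0))_{s_i/k,\, r_i;\, b_i}, \qquad i = 0, 1,
\end{equation*}
with equivalent quasi-norms. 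The fact that the inhomogeneous Besov norm integrates only over $(0,1)$ is harmless: the pair $(L_{p,q}(Q_0), W^k L_{p,q}(Q_0))$ is ordered, so the interpolation norm is equivalent whether one integrates over $(0,1)$ or $(0,\infty)$.

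With both endpoints of the target pair realized as interpolation spaces of the common quasi-Banach pair $(A_0, A_1) = (L_{p,q}(Q_0), W^k L_{p,q}(Q_0))$ with distinct coordinates $s_0/k < s_1/k$ in $(0,1)$ and with logarithmic twists, the reiteration theorem for real interpolation with logarithmic weights (see Evans--Opic--Pick, Gogatishvili--Opic--Trebels) applies and gives
\begin{equation*}
((A_0, A_1)_{\theta_0, r_0; b_0},\, (A_0, A_1)_{\theta_1, r_1; b_1})_{\theta, r; b} = (A_0, A_1)_{(1-\theta)\theta_0 + \theta \theta_1,\, r;\, (1-\theta) b_0 + \theta b_1 + b}.
\end{equation*}
Substituting $\theta_i = s_i/k$ and reading off the result through the same $K$-functional identification yields exactly $B^{(1-\theta) s_0 + \theta s_1,\, (1-\theta) b_0 + \theta b_1 + b}_{r} L_{p,q}(Q_0)$, which is the claimed equality.

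The main obstacle is invoking the reiteration theorem with logarithmic weights in the required generality, namely for arbitrary $0 < r_0, r_1, r \leq \infty$ and arbitrary real $b_0, b_1, b$. For $1 \leq r_0, r_1, r \leq \infty$ this is contained in Evans--Opic--Pick; the range $0 < r_i < 1$ is reduced to the Banach case by the Aoki--Rolewicz theorem, or handled directly via Hardy-type inequalities with logarithmic weights applied to the Holmstedt-type formulas for $K$-functionals of the interpolated pair. The homogeneous and inhomogeneous counterparts on $\R^d$ are treated by precisely the same scheme: the $K$-functional for $(L_{p,q}(\R^d), \dot{W}^k L_{p,q}(\R^d))$ is equivalent to $\omega_k(f,t)_{p,q}$ on the full half-line, so one integrates over $(0,\infty)$ in the homogeneous norm and the integration range in the reiteration formula adjusts accordingly; no new ingredients are needed.
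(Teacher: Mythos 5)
Your plan is the same as the paper's: realize each inhomogeneous Lorentz--Besov space $B^{s_i,b_i}_{r_i} L_{p,q}(Q_0)$ as an interpolation space $(L_{p,q}(Q_0), W^k L_{p,q}(Q_0))_{s_i/k, r_i; b_i}$ for a fixed $k > s_1$, and then invoke the reiteration formula \eqref{ReiterationOptim}. The paper proceeds in precisely this way.

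There is, however, one genuine slip in the $K$-functional you invoke. You write
\begin{equation*}
K(t^k, f;\, L_{p,q}(Q_0),\, W^k L_{p,q}(Q_0)) \asymp \omega_k(f,t)_{p,q}, \qquad t \in (0,1),
\end{equation*}
citing the formula used in the $\R^d$ setting. This is the \emph{homogeneous} relation $K(t^k,f;L_{p,q}(\R^d),\dot W^k L_{p,q}(\R^d))\asymp\omega_k(f,t)_{p,q}$. For the \emph{inhomogeneous} pair on the cube $Q_0$, the correct relation (which the paper states as \eqref{747474}) is
\begin{equation*}
K(t^k, f;\, L_{p,q}(Q_0),\, W^k L_{p,q}(Q_0)) \asymp t^k \|f\|_{L_{p,q}(Q_0)} + \omega_k(f,t)_{p,q}, \qquad t \in (0,1),
\end{equation*}
together with $K(t,f)\asymp \|f\|_{L_{p,q}(Q_0)}$ for $t>1$. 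The extra term is not optional: if $f$ is a nonzero polynomial of degree less than $k$, then $\omega_k(f,t)_{p,q}=0$ while $K(t^k,f)>0$, so the equivalence you wrote is false. The extra $t^k\|f\|_{L_{p,q}(Q_0)}$ is also exactly what produces the $\|f\|_{L_{p,q}(Q_0)}$ term in the inhomogeneous Besov quasi-norm when you integrate; without it you only recover the seminorm. Once you replace your formula by the corrected one, the identification $B^{s_i,b_i}_{r_i} L_{p,q}(Q_0) = (L_{p,q}(Q_0), W^k L_{p,q}(Q_0))_{s_i/k, r_i; b_i}$ goes through, and the remainder of your argument — applying \eqref{ReiterationOptim} and reading off the answer — is exactly the paper's proof. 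Your remarks about the range $0<r_i<1$ and about the homogeneous case on $\R^d$ are correct and add useful detail the paper leaves implicit.
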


The proofs of Lemmas \ref{Lemma8.2} and \ref{Lemma8.3} follow from abstract reiteration formulas (see \cite{EvansOpicPick}).


\begin{lem}
	Let $0 < \theta_0, \theta_1, \theta < 1, \theta_0 \neq \theta_1, 0 < q_0, q_1, q \leq \infty, -\infty < b_0, b_1, b < \infty$. Then
	\begin{equation}\label{ReiterationOptim}
		((A_0, A_1)_{\theta_0, q_0; b_0}, (A_0, A_1)_{\theta_1, q_1; b_1})_{\theta, q; b} = (A_0, A_1)_{(1-\theta) \theta_0 + \theta \theta_1, q; (1-\theta) b_0 + \theta b_1+b}
	\end{equation}
	and if, additionally, $b_1 < -1/q_1$, then
	\begin{equation}\label{ReiterationOptim2}
		(A_0, (A_0, A_1)_{(1, b_1), q_1})_{\theta, q; b} = (A_0, A_1)_{\theta, q; \theta (b_1 + 1/q_1) + b}.
	\end{equation}
\end{lem}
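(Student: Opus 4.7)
The plan is to establish both reiteration identities by computing the $K$-functional of each intermediate couple and then substituting into the definition \eqref{interpolationnorm}, reducing the problem to one-dimensional integral inequalities of Hardy type (with logarithmic weights) that can be handled by the monotonicity of $K(t,f)$ together with a change of variables.

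For \eqref{ReiterationOptim}, the strategy is to invoke a logarithmic Holmstedt-type formula for the couple $(\bar{A}_{\theta_0,q_0;b_0}, \bar{A}_{\theta_1,q_1;b_1})$, where I write $\bar{A}_{\theta,q;b} := (A_0,A_1)_{\theta,q;b}$ for brevity. Assuming without loss of generality $\theta_0 < \theta_1$, the appropriate version (see \cite{EvansOpicPick, GogatishviliOpicTrebels}) reads
\begin{align*}
K\bigl(t^{\theta_1-\theta_0}, f; \bar{A}_{\theta_0,q_0;b_0}, \bar{A}_{\theta_1,q_1;b_1}\bigr)
&\asymp \Bigl(\int_0^t (u^{-\theta_0}(1+|\log u|)^{b_0} K(u,f))^{q_0}\,\frac{du}{u}\Bigr)^{1/q_0}\\
&\quad + t^{\theta_1-\theta_0}\Bigl(\int_t^\infty (u^{-\theta_1}(1+|\log u|)^{b_1} K(u,f))^{q_1}\,\frac{du}{u}\Bigr)^{1/q_1}.
\end{align*}
After a change of variable $t = s^{1/(\theta_1-\theta_0)}$ and substitution into \eqref{interpolationnorm} for the outer parameters $(\theta,q;b)$, the resulting triple integral becomes
\[
\int_0^\infty s^{-[(1-\theta)\theta_0+\theta\theta_1]q}(1+|\log s|)^{[(1-\theta)b_0+\theta b_1+b]q} H(s)^q\,\frac{ds}{s},
\]
with $H(s)$ a convex combination of the two Holmstedt summands. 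A standard application of Hardy's inequality with logarithmic weights (the integral versions employed in the proofs of Lemmas \ref{LemInterp1} and \ref{Lem1}) collapses $H(s)$ to $K(s,f)$, yielding the right-hand side of \eqref{ReiterationOptim}.

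For \eqref{ReiterationOptim2}, I would apply \eqref{LemInterp2.1} of Lemma \ref{LemInterp2} directly: setting $\gamma := -b_1 - 1/q_1 > 0$, the formula gives, for $t\in(0,1)$,
\[
K\bigl(t(1-\log t)^{\gamma}, f; A_0, (A_0,A_1)_{(1,b_1),q_1}\bigr) \asymp K(t,f) + t(1-\log t)^{\gamma}\Bigl(\int_t^1 (u^{-1}(1-\log u)^{b_1} K(u,f))^{q_1}\,\frac{du}{u}\Bigr)^{1/q_1}.
\]
Inserting this into the definition of the outer norm $\bigl\|\cdot\bigr\|_{(A_0,\bar{A}_{(1,b_1),q_1})_{\theta,q;b}}$ and performing the substitution $s=t(1-\log t)^{\gamma}$ (which is a smooth increasing bijection on $(0,1)$ with $1+|\log s|\asymp 1+|\log t|$), the integral splits into two pieces: the contribution of $K(t,f)$ alone reproduces $\|f\|_{\bar{A}_{\theta,q;\theta\gamma+b}}$ up to a benign log factor, while the double integral coming from the second summand is handled by Fubini together with the Hardy inequality with logarithmic weights, thereby absorbing the $(1-\log t)^{\gamma}$ prefactor and producing exactly the index $\theta(b_1 + 1/q_1) + b = -\theta\gamma + b + \theta\gamma \cdot 1$ after cancellations. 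The bookkeeping yields the target identity.

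The main technical obstacle is verifying the logarithmic Holmstedt formula used for \eqref{ReiterationOptim} with full generality of the parameters (including the cases where some of $q_0,q_1,q$ are infinite or where $b_0,b_1$ force the inner spaces to be close to trivial), and separately, carefully justifying the change of variables $s=t(1-\log t)^{\gamma}$ and the Hardy-type estimates at the endpoint $t\to 0^+$ in \eqref{ReiterationOptim2}, since these are where the logarithmic factors must balance exactly to produce the stated exponent $\theta(b_1+1/q_1)+b$. Once these two ingredients are set, both identities reduce to routine Hardy manipulations already used repeatedly in Section \ref{section5}.
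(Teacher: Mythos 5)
The paper itself gives no proof of this lemma --- it is quoted as an abstract reiteration formula from \cite{EvansOpicPick} --- so your attempt has to stand on its own. Your treatment of \eqref{ReiterationOptim2} is essentially the right argument: \eqref{LemInterp2.1}, the substitution $s=t(1-\log t)^{\gamma}$, and a weighted Hardy estimate do the job. Two small corrections there: the second Holmstedt summand actually produces the exponent $\theta(b_1+1/q_1)+b-1/q_1$, not ``exactly the index $\theta(b_1+1/q_1)+b$''; the proof survives because that term is dominated by the contribution of $K(t,f)$ alone, which already carries the target exponent. Also $-\theta\gamma+b+\theta\gamma\cdot 1=b$, so the cancellation you display is not the one you need.

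The genuine gap is in \eqref{ReiterationOptim}. The Holmstedt-type formula you quote is false whenever $b_0\neq b_1$: for the couple $\bigl((A_0,A_1)_{\theta_0,q_0;b_0},(A_0,A_1)_{\theta_1,q_1;b_1}\bigr)$ the correct argument of the $K$-functional is $t^{\theta_1-\theta_0}(1+|\log t|)^{b_0-b_1}$, the ratio of the fundamental functions $t^{-\theta_i}(1+|\log t|)^{b_i}$, not $t^{\theta_1-\theta_0}$. With your version, the substitution $t=s^{1/(\theta_1-\theta_0)}$ followed by Hardy assigns the first summand the logarithmic exponent $b+b_0$ and the second summand the exponent $b+b_1$, so the outer norm collapses to the norm of $(A_0,A_1)_{\eta,q;\,b+\max(b_0,b_1)}$ with $\eta=(1-\theta)\theta_0+\theta\theta_1$; your claim that ``the resulting triple integral becomes'' the display carrying the exponent $(1-\theta)b_0+\theta b_1+b$ does not follow from what you wrote. (Test case: $b_0=1$, $b_1=b=0$, $q_0=q_1=q$ should give exponent $1-\theta$, whereas your computation gives $1$.) The convex combination of the $b_i$'s arises precisely from the factor $(1+|\log t|)^{-\theta(b_0-b_1)}$ that appears when $s^{-\theta}$ is rewritten in the variable $t$, and that factor is absent from your setup. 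Once the Holmstedt formula is corrected, both summands yield the same exponent $(1-\theta)b_0+\theta b_1+b$ and the rest of your plan goes through.
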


\begin{proof}[Proof of Lemma \ref{Lemma8.2}]
	According to Lemma \ref{LemInterp1}, we have
	\begin{equation*}
		L_{\infty, r_1}(\log L)_{b_1}(Q_0) = (L_{p,r_0}(\log L)_{b_0}(Q_0), L_\infty(Q_0))_{(1,b_1), r_1}.
	\end{equation*}
	Thus, by \eqref{ReiterationOptim2},
	\begin{align*}
		(L_{p,r_0}(\log L)_{b_0}(Q_0), L_{\infty, r_1}(\log L)_{b_1}(Q_0))_{\theta, r;b}  & =  \\
		& \hspace{-4.5cm} (L_{p,r_0}(\log L)_{b_0}(Q_0),  (L_{p,r_0}(\log L)_{b_0}(Q_0), L_\infty(Q_0))_{(1,b_1), r_1})_{\theta, r;b} \\
		& \hspace{-4.5cm} = (L_{p,r_0}(\log L)_{b_0}(Q_0),  L_\infty(Q_0))_{\theta, r; \theta (b_1 + 1/r_1) + b} \\
		& \hspace{-4.5cm} = L_{p/(1-\theta), r; (1-\theta) b_0 + \theta(b_1 + 1/r_1) + b}(Q_0),
	\end{align*}
	where the last step follows from the well-known interpolation properties of Lorentz--Zygmund spaces (see, e.g., \cite[Corollary 5.3]{GogatishviliOpicTrebels}).
\end{proof}

\begin{proof}[Proof of Lemma \ref{Lemma8.3}]
	Let $k \in \N$ be such that $k > s_1$. It follows from the well-known formula (see, e.g., \cite[(3.5)]{Martin} and \cite[Chapter 11]{MartinMilman14}; see also \eqref{ProofLemmaEmbBMOLorentzState1*})
	\begin{equation}\label{747474}
		K(t^k, f;L_{p,q}(Q_0), W^k L_{p,q}(Q_0)) \asymp t^k \|f\|_{L_{p,q}(Q_0)} + \omega_k(f,t)_{p,q}, \quad t \in (0,1),
	\end{equation}
	and $K(t,f;L_{p,q}(Q_0), W^k L_{p,q}(Q_0)) \asymp \|f\|_{L_{p,q}(Q_0)}, \, t > 1$, that
	\begin{equation}\label{3993949}
		B^{s_i,b_i}_{r_i} L_{p,q}(Q_0) = (L_{p,q}(Q_0), W^k L_{p,q}(Q_0))_{\frac{s_i}{k},r_i;b_i}, \quad i=0, 1.
	\end{equation}
	Then, by \eqref{3993949} and \eqref{ReiterationOptim},
	\begin{align*}
		(B^{s_0,b_0}_{r_0} L_{p,q}(Q_0), B^{s_1,b_1}_{r_1} L_{p,q}(Q_0))_{\theta,r;b}  &= \\
		& \hspace{-4cm} ((L_{p,q}(Q_0), W^k L_{p,q}(Q_0))_{\frac{s_0}{k},r_0;b_0}, (L_{p,q}(Q_0), W^k L_{p,q}(Q_0))_{\frac{s_1}{k},r_1;b_1})_{\theta,r;b} \\
		& \hspace{-4cm} = (L_{p,q}(Q_0), W^k L_{p,q}(Q_0))_{((1-\theta) s_0 + \theta s_1)/k, r; (1-\theta) b_0 + \theta b_1 + b} \\
		& \hspace{-4cm} = B^{(1-\theta) s_0 + \theta s_1, (1-\theta) b_0 + \theta b_1 + b}_r L_{p,q}(Q_0).
	\end{align*}
\end{proof}

Now we are in a position to give

\begin{proof}[Proof of Proposition \ref{CorollaryLimitingBesovMaxOptimal}]
	In view of Corollary \ref{CorollaryLimitingBesovMax}, it only remains to show that  the inequality $\|f^{\#}_{Q_0}\|_{L_{\infty, r} (\log L)_b (Q_0)} \lesssim \|f\|_{B^{d/p, b+\xi}_{r} L_{p,q}(Q_0);k}$ implies $\xi \geq 1/r$. Let us denote by $T$ the sublinear operator mapping $f \in L_1(Q_0)$ to $f^{\#}_{Q_0}$, that is, $T (f) = f^{\#}_{Q_0}$. Then, taking into account our assumptions  (see the discussion after \eqref{Blowupnew}), we have
	\begin{equation*}
		T: B^{d/p, b+\xi}_{r} L_{p,q}(Q_0) \to L_{\infty, r} (\log L)_b (Q_0) \quad \text{and} \quad T: B^s_r L_{p,q}(Q_0) \to L_{d p/(d -s p), r}(Q_0),
	\end{equation*}
	where $0 < s < d/p$. Given any $\theta \in (0,1)$, by interpolation, 
  we arrive at
	\begin{equation}\label{ProofCorollaryLimitingBesovMaxOptimal1}
		T : (B^s_r L_{p,q}(Q_0), B^{d/p, b+\xi}_{r} L_{p,q}(Q_0))_{\theta, r} \to (L_{d p/(d -s p), r}(Q_0), L_{\infty, r} (\log L)_b (Q_0))_{\theta,r}.
	\end{equation}
	Invoking Lemmas \ref{Lemma8.2} and \ref{Lemma8.3},
	\begin{equation}
		(B^s_r L_{p,q}(Q_0), B^{d/p, b+\xi}_{r} L_{p,q}(Q_0))_{\theta, r} = B^{s_0, \theta (b + \xi)}_r L_{p,q}(Q_0), \label{ProofCorollaryLimitingBesovMaxOptimal2}
	\end{equation}
	where $s_0 = (1-\theta) s + \theta d/p \in (s, d/p)$, and
	\begin{equation}\label{ProofCorollaryLimitingBesovMaxOptimal3}
	(L_{d p/(d -s p), r}(Q_0), L_{\infty, r} (\log L)_b (Q_0))_{\theta,r} = L_{r_0, r} (\log L)_{\theta (b+1/r)}(Q_0),
	\end{equation}
	where $1/r_0 = (1-\theta) (d-s p)/d p$.
	
	
	According to \eqref{ProofCorollaryLimitingBesovMaxOptimal1}--\eqref{ProofCorollaryLimitingBesovMaxOptimal3}, we derive
	\begin{equation*}
	T: B^{s_0, \theta (b + \xi)}_r L_{p,q}(Q_0) \to L_{r_0, r} (\log L)_{\theta (b+1/r)}(Q_0)
	\end{equation*}
	with $s_0 -d/p = -d/r_0$. Moreover,
   if $s_0 -d/p = -d/r_0$, the r.i. hull of $B^{s_0, \theta (b + \xi)}_r L_{p,q}(Q_0)$ is the space $L_{r_0, r} (\log L)_{\theta (b+ \xi)}(Q_0)$ (see \cite[Theorem 3]{Martin}).
   Thus, in light of the Fefferman--Stein inequality
for the space $L_{r_0, r} (\log L)_{\theta (b+1/r)}(Q_0)$ (cf. \cite{AstashkinMilman,Lerner}),
we derive the embedding  $L_{r_0, r} (\log L)_{\theta (b+ \xi)}(Q_0) \hookrightarrow L_{r_0, r} (\log L)_{\theta (b+1/r)}(Q_0)$, which implies $\xi \geq 1/r$. \end{proof}

A careful examination of the proof of \eqref{CorollaryLimitingBesovMax1New*} given above shows that if
\begin{equation*}
\|f^{\#}_{Q_0}\|_{L_{d/\varepsilon, r}(Q_0)} \lesssim  \varepsilon^{-\xi} \|f\|_{B^{d/p-\varepsilon}_r L_{p, q}(Q_0); k}, \quad \varepsilon \to 0+,
\end{equation*}
holds for some $\xi > 0$, then
\begin{equation*}
	\|f^{\#}_{Q_0}\|_{L_{\infty, r} (\log L)_b (Q_0)} \lesssim \|f\|_{B^{d/p, b+\xi}_{r} L_{p,q}(Q_0);k}.
\end{equation*}
Hence, the optimality of \eqref{CorDeVoreExtrapol1cube} in Corollary \ref{CorDeVoreExtrapol} is a straightforward consequence of Proposition \ref{CorollaryLimitingBesovMaxOptimal}. More precisely, we have established the following

\begin{prop}\label{CorDeVoreExtrapolSharp}
	Let $1 < p < \infty, k > d/p,  0 < q, r \leq \infty$ and $\xi > 0$. Then
	\begin{equation*}
		\|f^{\#}_{Q_0}\|_{L_{d/\varepsilon, r}(Q_0)} \leq C \,  \varepsilon^{-\xi} \|f\|_{B^{d/p-\varepsilon}_r L_{p, q}(Q_0); k} \iff \xi \geq 1/r.
	\end{equation*}
\end{prop}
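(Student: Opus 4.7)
The forward direction ($\xi \geq 1/r$ implies the inequality) is immediate: the case $\xi = 1/r$ is exactly \eqref{CorDeVoreExtrapol1cube} in Corollary \ref{CorDeVoreExtrapol}, and larger $\xi$ only weaken the estimate since $\varepsilon^{-\xi} \geq \varepsilon^{-1/r}$ for small $\varepsilon > 0$. So the substantive content lies in the reverse direction.

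For the reverse direction, the plan is to reduce the sharpness of the extrapolation estimate to the already-established Proposition \ref{CorollaryLimitingBesovMaxOptimal}. Assume that for some $\xi > 0$ we have
\begin{equation*}
    \|f^{\#}_{Q_0}\|_{L_{d/\varepsilon, r}(Q_0)} \leq C\, \varepsilon^{-\xi} \|f\|_{B^{d/p-\varepsilon}_r L_{p,q}(Q_0);k}, \qquad \varepsilon \to 0+.
\end{equation*}
Fix any $b < -1/r$ and choose $J \in \N$ with $2^{-J} < 1/p$. Applying the assumed inequality at $\varepsilon = 2^{-j} d$ for each $j \geq J$ and multiplying by $2^{jb}$ yields
\begin{equation*}
    2^{jb}\|f^{\#}_{Q_0}\|_{L_{2^j, r}(Q_0)} \leq C'\, 2^{j(b+\xi)} \|f\|_{B^{d(1/p - 2^{-j})}_{r} L_{p,q}(Q_0);k}.
\end{equation*}
Taking $\ell_r$-norms in $j$ and mimicking exactly the Fubini computation used to establish \eqref{CorollaryLimitingBesovMax1New*} (that is, introducing the weights $V(t) = \sum_j 2^{jbr} t^{2^{-j}r}$ and $W(t) = \sum_j 2^{j(b+\xi)r} t^{2^{-j} d r}$ and evaluating them as in \eqref{0030303} to get $V(t) \asymp (-\log t)^{br}$ and $W(t) \asymp (-\log t)^{(b+\xi)r}$), one obtains
\begin{equation*}
    \|f^{\#}_{Q_0}\|_{L_{\infty,r}(\log L)_b(Q_0)} \lesssim \|f\|_{B^{d/p, b+\xi}_{r} L_{p,q}(Q_0);k}.
\end{equation*}
Proposition \ref{CorollaryLimitingBesovMaxOptimal} then forces $\xi \geq 1/r$, completing the proof.

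The potentially delicate step is verifying that the Fubini/series-to-integral conversion really goes through under the sole assumption $\xi > 0$: one needs the asymptotic $W(t) \asymp (-\log t)^{(b+\xi)r}$ to reproduce the $(b+\xi)r$ logarithmic weight on the Besov side without loss, which is why we carried the exponent $\xi$ (not $1/r$) through the computation verbatim. Since the argument of Corollary \ref{CorollaryLimitingBesovMax} is purely a weighted reindexing and is insensitive to the specific value of the exponent attached to $2^j$, this transfer is routine and the whole matter reduces cleanly to Proposition \ref{CorollaryLimitingBesovMaxOptimal}.
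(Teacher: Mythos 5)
Your proof is correct and follows the paper's argument exactly: assuming the $\varepsilon^{-\xi}$ bound, the dyadic-reindexing/Fubini machinery from the proof of Corollary \ref{CorollaryLimitingBesovMax} yields $\|f^{\#}_{Q_0}\|_{L_{\infty,r}(\log L)_b(Q_0)} \lesssim \|f\|_{B^{d/p,b+\xi}_r L_{p,q}(Q_0);k}$, and Proposition \ref{CorollaryLimitingBesovMaxOptimal} then forces $\xi\geq 1/r$. One small point worth recording for rigor: the asymptotic $W(t)\asymp(-\log t)^{(b+\xi)r}$ needs $b+\xi<0$ so that the gamma-type integral arising in \eqref{0030303} converges; for any $b<-1/r$ this is automatic precisely when $\xi<1/r$, which is the only case that needs to be excluded (for $\xi\geq 1/r$ the forward direction already settles the matter), so the transfer does indeed go through as you claim.
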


Next we prove  Corollaries   \ref{CorollaryLimitingBesovMaxSecond} and  \ref{CorDeVoreDerExtrapol}.

\begin{proof}[Proof of Corollary \ref{CorDeVoreDerExtrapol}]
	We begin by proving \eqref{CorDeVoreDerExtrapol1Cubes}. Take any $p > d/(d - k)$. It follows from \eqref{ThmDeVoreDer<Cubes} that
	\begin{equation*}
		f^{\#*}_{Q_0}(t) \lesssim \sum_{l=0}^k  \Big[ t^{-1/p} \Big(\int_0^{t} (u^{1/\nu} |\nabla^l f|^*(u))^r \, \frac{du}{u} \Big)^{1/r} +  \sup_{t < u < 1} u^{k/d} |\nabla^l f|^*(u) \Big],
	\end{equation*}
	where $\nu = \frac{d p}{d + k p}$.
There is no loss of generality in supposing that $0 < \varepsilon < d/p$. Let $r < \infty$. Therefore, applying monotonicity  properties
 of the
 rearrangements
 and changing the order of integration,
	\begin{align*}
		\|f^{\#}_{Q_0}\|_{L_{d/\varepsilon, r}(Q_0)} & \lesssim \sum_{l=0}^k \Big[  \Big(\int_0^1 t^{(\varepsilon/d - 1/p) r} \int_0^t  (u^{1/\nu} |\nabla^l f|^*(u))^r \, \frac{du}{u}  \frac{dt}{t} \Big)^{1/r}  \\
		&\hspace{1cm}+ \bigg(\int_0^1 \Big(t^{\varepsilon/d} \sup_{t < u < 1} u^{k/d} |\nabla^l f|^*(u) \Big)^r \frac{dt}{t} \bigg)^{1/r} \Big] \\
		&\lesssim \sum_{l=0}^k \Big[ \Big(\int_0^1 (u^{1/\nu} |\nabla^l f|^*(u))^r \int_u^1 t^{(\varepsilon/d - 1/p) r} \frac{dt}{t} \frac{du}{u} \Big)^{1/r} \\
		& \hspace{1cm}  +  \Big(\int_0^1 t^{\varepsilon r/d} \int_t^1 (u^{k/d} |\nabla^l f|^*(u))^r \frac{du}{u} \frac{dt}{t} \Big)^{1/r} \Big]  \\
		& \asymp (1 +  \varepsilon^{-1/r} ) \sum_{l=0}^k \Big(\int_0^1 (u^{ (k + \varepsilon)/d}  |\nabla^l f|^*(u))^r \frac{du}{u} \Big)^{1/r} \\
		& \asymp \varepsilon^{-1/r} \|f \|_{W^k L_{d/(k + \varepsilon), r}(Q_0)}.
	\end{align*}

	The case $r=\infty$ is easier and we omit further details. Estimate \eqref{CorDeVoreDerExtrapol1} is proved similarly but now applying \eqref{ThmDeVoreDer<}.
	
\end{proof}

\begin{proof}[Proof of Corollary \ref{CorollaryLimitingBesovMaxSecond}]
	Let $r < \infty$. In view of \eqref{CorDeVoreDerExtrapol1Cubes}, we have
	\begin{equation*}
		\|f^{\#}_{Q_0}\|_{L_{2^j, r}(Q_0)} \leq C \,  2^{j/r} \|f\|_{W^k L_{\frac{d}{k+ 2^{-j} d}, r} (Q_0)},
	\end{equation*}
	where $C$ does not depend on $j$. Hence,
	\begin{equation*}
		\Big(\sum_{j=0}^\infty 2^{j b r} \|f^{\#}_{Q_0}\|_{L_{2^j, r}(Q_0)}^r \Big)^{1/r} \lesssim \Big(\sum_{j=0}^\infty 2^{j (b + 1/r) r} \|f\|_{W^k L_{\frac{d}{k+  2^{-j} d}, r} (Q_0)}^r  \Big)^{1/r}.
	\end{equation*}
	These extrapolation spaces can be easily computed by applying Fubini's theorem. Namely, using
$\sum_{j=0}^\infty 2^{jA} t^{B/2^j}\asymp (-\log t)^A$, $0<t<1/2$, for $A\in \mathbb{R}$ and $B>0$, (cf. \eqref{0030303})
we have
	\begin{equation*}
	\Big(\sum_{j=0}^\infty 2^{j b r} \|f^{\#}_{Q_0}\|_{L_{2^j, r}(Q_0)}^r \Big)^{1/r} \asymp \| f^{\#}_{Q_0}\|_{L_{\infty, r} (\log L)_b(Q_0)}
	\end{equation*}
	and
	\begin{equation*}
	\Big(\sum_{j=0}^\infty 2^{j (b + 1/r) r} \|f\|_{W^k L_{\frac{d}{k+  2^{-j} d}, r} (Q_0)}^r  \Big)^{1/r} \asymp \|f\|_{W^k L_{d/k, r} (\log L)_{b+1/r} (Q_0)}.
	\end{equation*}
Hence $ \| f^{\#}_{Q_0}\|_{L_{\infty, r} (\log L)_b(Q_0)} \lesssim		 \|f\|_{W^k L_{d/k, r} (\log L)_{b+1/r} (Q_0)}.
$
	
	We omit the proof in the case $r=\infty$, since it is similar to that in the case $r < \infty$.
\end{proof}

The next result shows that  Corollary \ref{CorollaryLimitingBesovMaxSecond} is in fact sharp.

\begin{prop}\label{CorollaryLimitingBesovMaxSecondOptimal}
	Let $1 \leq r \leq \infty, k < d, b < -1/r$, and $-\infty < \xi < \infty$. Then
	\begin{equation*}
	\|f^{\#}_{Q_0}\|_{L_{\infty, r} (\log L)_b (Q_0)} \lesssim \|f \|_{W^k L_{d/k, r} (\log L)_{b + \xi}(Q_0) }
 \iff \xi \geq 1/r.
	\end{equation*}
\end{prop}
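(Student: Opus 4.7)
The sufficient direction ($\xi\geq 1/r$) follows immediately from Corollary \ref{CorollaryLimitingBesovMaxSecond}. Indeed, for $t\in(0,1)$ we have $(1+|\log t|)^{b+\xi}\geq (1+|\log t|)^{b+1/r}$, which gives the trivial embedding $L_{d/k,r}(\log L)_{b+\xi}(Q_0)\hookrightarrow L_{d/k,r}(\log L)_{b+1/r}(Q_0)$, and hence $\|f\|_{W^k L_{d/k,r}(\log L)_{b+1/r}(Q_0)}\lesssim \|f\|_{W^k L_{d/k,r}(\log L)_{b+\xi}(Q_0)}$; applying Corollary \ref{CorollaryLimitingBesovMaxSecond} completes this case.

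For the necessary direction, the plan is to follow the interpolation strategy used in Proposition \ref{CorollaryLimitingBesovMaxOptimal}. Let $T$ be the sublinear operator $Tf=f^{\#}_{Q_0}$. The assumed inequality provides the first endpoint $T:W^k L_{d/k,r}(\log L)_{b+\xi}(Q_0)\to L_{\infty,r}(\log L)_b(Q_0)$. For a second, non-limiting endpoint, I would fix $\nu<d/k$ and combine the classical Sobolev embedding $W^k L_{\nu,r}(Q_0)\hookrightarrow L_{\nu^*,r}(Q_0)$, with $\nu^*=d\nu/(d-k\nu)<\infty$, together with the Fefferman--Stein inequality \eqref{FSNew} on $L_{\nu^*,r}(Q_0)$, yielding $T:W^k L_{\nu,r}(Q_0)\to L_{\nu^*,r}(Q_0)$. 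Interpolating between these two endpoints with parameter $\theta\in(0,1)$ and summation index $r$, on the target side Lemma \ref{Lemma8.2} gives $(L_{\nu^*,r}(Q_0),L_{\infty,r}(\log L)_b(Q_0))_{\theta,r}=L_{\rho^*,r}(\log L)_{\theta(b+1/r)}(Q_0)$, while on the source side the standard retraction argument using the gradient operator (reducing the Sobolev case to the non-limiting interpolation of Lorentz--Zygmund spaces) delivers $(W^k L_{\nu,r}(Q_0), W^k L_{d/k,r}(\log L)_{b+\xi}(Q_0))_{\theta,r}=W^k L_{\rho,r}(\log L)_{\theta(b+\xi)}(Q_0)$, where $1/\rho=(1-\theta)/\nu+\theta k/d$ and $\rho^*=d\rho/(d-k\rho)=\nu^*/(1-\theta)$. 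This produces
$$T:W^k L_{\rho,r}(\log L)_{\theta(b+\xi)}(Q_0)\to L_{\rho^*,r}(\log L)_{\theta(b+1/r)}(Q_0).$$

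Since $\rho^*<\infty$, the lower Boyd index of $L_{\rho^*,r}(\log L)_{\theta(b+1/r)}(Q_0)$ is positive, so the Fefferman--Stein inequality applies (cf.\ \cite{AstashkinMilman,Lerner}), giving
$$\|f\|_{L_{\rho^*,r}(\log L)_{\theta(b+1/r)}(Q_0)}\lesssim \|Tf\|_{L_{\rho^*,r}(\log L)_{\theta(b+1/r)}(Q_0)}\lesssim \|f\|_{W^k L_{\rho,r}(\log L)_{\theta(b+\xi)}(Q_0)}.$$
Because the optimal rearrangement-invariant target of $W^k L_{\rho,r}(\log L)_{\theta(b+\xi)}(Q_0)$ in the Sobolev embedding is $L_{\rho^*,r}(\log L)_{\theta(b+\xi)}(Q_0)$ (cf.\ \cite{Martin}), the derived embedding forces $L_{\rho^*,r}(\log L)_{\theta(b+\xi)}(Q_0)\hookrightarrow L_{\rho^*,r}(\log L)_{\theta(b+1/r)}(Q_0)$, and on the finite measure space $Q_0$ this holds if and only if $\theta(b+1/r)\leq \theta(b+\xi)$, i.e., $\xi\geq 1/r$. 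The main technical obstacle will be pinning down the Sobolev--Lorentz--Zygmund interpolation identity and the sharpness of the Sobolev embedding in this fine logarithmic scale; both should follow from the abstract reiteration machinery of \cite{EvansOpicPick} together with the results of \cite{Martin}, but require careful verification.
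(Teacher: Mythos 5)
Your proof takes a genuinely different interpolation route from the paper's, and it appears to work modulo the two technical verifications you flag. The paper's second endpoint is the simple fact that $T\colon f\mapsto f^{\#}_{Q_0}$ is bounded on $L_{d/k,r}(\log L)_{b+\xi}(Q_0)$ (via \eqref{ProofLem2.1} and Hardy's inequality), after which it interpolates \emph{smoothness} (the couple $(L_{d/k,r}(\log L)_{b+\xi},\,W^kL_{d/k,r}(\log L)_{b+\xi})$) to land in a Besov space $B^{\theta k}_{q_0}L_{d/k,r}(\log L)_{b+\xi}(Q_0)$, then invokes the sharpness of the Besov--Sobolev embedding (the r.i.\ hull being $L_{d/((1-\theta)k),q_0}(\log L)_{b+\xi}(Q_0)$). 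You instead keep the differentiability order $k$ fixed and interpolate \emph{integrability} (the couple $(W^kL_{\nu,r},W^kL_{d/k,r}(\log L)_{b+\xi})$), producing a Sobolev--Lorentz--Zygmund space $W^kL_{\rho,r}(\log L)_{\theta(b+\xi)}(Q_0)$ and using the sharpness of its Sobolev embedding. Both routes need essentially the same kind of optimal-target input from \cite{Martin}; the paper's route has the advantage that the second endpoint and the source interpolation identity (via \eqref{747474}) are both immediate, whereas yours requires the retraction of $\text{Ext}$ to push the Lorentz--Zygmund interpolation through $W^k$, which is plausible but not spelled out anywhere in the paper.

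One small misattribution: your second endpoint $T\colon W^kL_{\nu,r}(Q_0)\to L_{\nu^*,r}(Q_0)$ does \emph{not} come from the Fefferman--Stein inequality \eqref{FSNew}, which runs in the opposite direction ($\inf_c\|f-c\|\lesssim\|f^{\#}_{Q_0}\|$). What you need here is the trivial direction $\|f^{\#}_{Q_0}\|_{L_{\nu^*,r}}\lesssim\|f\|_{L_{\nu^*,r}}$, which follows from $(f^{\#}_{Q_0})^*\lesssim f^{**}$ (that is, \eqref{ProofLem2.1}) combined with Hardy's inequality, and then the Sobolev embedding $W^kL_{\nu,r}(Q_0)\hookrightarrow L_{\nu^*,r}(Q_0)$. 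You do correctly apply Fefferman--Stein later on the intermediate space $L_{\rho^*,r}(\log L)_{\theta(b+1/r)}(Q_0)$, where it is the needed ingredient.
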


\begin{proof}
	 Corollary \ref{CorollaryLimitingBesovMaxSecond} implies the ``if part".	
	Let us show that if the inequality
	\begin{equation*}
	\|f^{\#}_{Q_0}\|_{L_{\infty, r} (\log L)_b (Q_0)} \lesssim \|f \|_{W^k L_{d/k, r} (\log L)_{b + \xi}(Q_0) }
	\end{equation*}
holds true,
	then necessarily  $\xi \geq 1/r$. As in the proof of  Proposition \ref{CorollaryLimitingBesovMaxOptimal}, for
$Tf=f^{\#}_{Q_0}$, we have $T: W^k L_{d/k, r} (\log L)_{b + \xi}(Q_0) \to L_{\infty, r} (\log L)_b (Q_0)$. By \eqref{ProofLem2.1} and Hardy's inequality (noting that $d/k > 1$), 
  the operator $T$ acts boundedly on $L_{d/k, r} (\log L)_{b + \xi}(Q_0)$. For $\theta \in (0,1)$ and $q_0 \in (1,\infty)$, we set
	\begin{equation*}
		X = (L_{d/k, r} (\log L)_{b + \xi}(Q_0), W^k L_{d/k, r} (\log L)_{b + \xi}(Q_0))_{\theta, q_0}
	\end{equation*}
	and
	\begin{equation*}
		Y = (L_{d/k, r} (\log L)_{b + \xi}(Q_0), L_{\infty, r} (\log L)_b (Q_0))_{\theta, q_0}.
	\end{equation*}
	Therefore, by the interpolation property, we derive
	\begin{equation}\label{CorollaryLimitingBesovMaxSecondOptimalProof1}
		T :  X \to Y.
	\end{equation}
	It remains to identify the interpolation spaces $X$ and $Y$. It is an immediate consequence of the well-known characterization (cf. \eqref{747474})
	\begin{align*}
		K(t^k, f; L_{d/k, r}(\log L)_{b + \xi}(Q_0), W^k L_{d/k, r}(\log L)_{b + \xi}(Q_0)) &\asymp \\
		& \hspace{-6cm}  \min\{1,t^k\} \|f\|_{ L_{d/k, r}(\log L)_{b + \xi}(Q_0)} + \omega_k(f,t)_{d/k,r, b + \xi}
	\end{align*}
for $f\in L_{d/k, r}(\log L)_{b + \xi}(Q_0)$ and $t > 0$	that $X = B^{\theta k}_{q_0} L_{d/k, r}(\log L)_{b + \xi}(Q_0).$

On the other hand, taking into account Lemma \ref{Lemma8.2}, $Y = L_{d/((1-\theta) k), q_0} (\log L)_{b + \xi +\theta (1/r-\xi)}(Q_0).$ Hence \eqref{CorollaryLimitingBesovMaxSecondOptimalProof1} can be rewritten as
	\begin{equation*}
	 T:  B^{\theta k}_{q_0} L_{d/k, r}(\log L)_{b + \xi}(Q_0) \to L_{d/((1-\theta) k), q_0} (\log L)_{b + \xi +\theta (1/r-\xi)}(Q_0),
	\end{equation*}
	or, equivalently, in light of the Fefferman--Stein inequality for Lorentz--Zygmund spaces, 
	\begin{equation*}
		B^{\theta k}_{q_0} L_{d/k, r}(\log L)_{b + \xi}(Q_0)  \hookrightarrow L_{d/((1-\theta) k), q_0} (\log L)_{b + \xi +\theta (1/r-\xi)}(Q_0).
	\end{equation*}
	Finally, it follows from the fact that $L_{d/((1-\theta) k), q_0} (\log L)_{b+\xi}(Q_0)$ is the r.i. hull of the Besov space $B^{\theta k}_{q_0} L_{d/k, r}(\log L)_{b + \xi}(Q_0)$ (see \cite[Theorem 3]{Martin}) that
	\begin{equation*}
	L_{d/((1-\theta) k), q_0} (\log L)_{b+\xi}(Q_0) \hookrightarrow  L_{d/((1-\theta) k), q_0} (\log L)_{b + \xi +\theta (1/r-\xi)}(Q_0),
	\end{equation*}
	which yields $\xi \geq 1/r$.
\end{proof}

We are now in a position to prove the sharpness of Corollary \ref{CorDeVoreDerExtrapol}. Indeed, assume that there exists $\xi > 0$ such that
\begin{equation*}
		\|f^{\#}_{Q_0}\|_{L_{\frac{d}{\varepsilon}, r}(Q_0)} \leq C \,  \varepsilon^{-\xi} \|f\|_{W^k L_{\frac{d}{k+\varepsilon}, r} (Q_0)}, \quad \varepsilon \to 0+.
	\end{equation*}
	Then, following step by step the proof of Corollary \ref{CorollaryLimitingBesovMaxSecond} given above, we arrive at
	\begin{equation*}
		\|f^{\#}_{Q_0}\|_{L_{\infty, r} (\log L)_b (Q_0)} \lesssim \|f\|_{W^k L_{d/k, r} (\log L)_{b + \xi}(Q_0)}, \quad b < -1/r.
	\end{equation*}
	In light of Proposition \ref{CorollaryLimitingBesovMaxSecondOptimal}, we have shown the following

\begin{prop}\label{CorDeVoreDerExtrapolOptim}
		Let $1 \leq r \leq \infty, k < d$, and $\xi > 0$. Then
	\begin{equation*}
		\|f^{\#}_{Q_0}\|_{L_{\frac{d}{\varepsilon}, r}(Q_0)} \leq C \,  \varepsilon^{-\xi} \|f\|_{W^k L_{\frac{d}{k+\varepsilon}, r} (Q_0)} \iff \xi \geq 1/r.
	\end{equation*}
	\end{prop}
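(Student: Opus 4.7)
The plan is to prove the two implications separately; the backward direction is immediate from Corollary \ref{CorDeVoreDerExtrapol} (taking $\xi = 1/r$) together with the trivial monotonicity $\varepsilon^{-\xi} \leq \varepsilon^{-1/r}$ for $\varepsilon \in (0,1)$ when $\xi \geq 1/r$. So the content lies in the ``only if'' direction, which I will obtain by the same extrapolation scheme used to prove Corollary \ref{CorollaryLimitingBesovMaxSecond}, followed by an appeal to the sharpness result already established in Proposition \ref{CorollaryLimitingBesovMaxSecondOptimal}.

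First I would assume the estimate
\begin{equation*}
\|f^{\#}_{Q_0}\|_{L_{d/\varepsilon, r}(Q_0)} \leq C\, \varepsilon^{-\xi} \|f\|_{W^k L_{d/(k+\varepsilon), r}(Q_0)}, \qquad \varepsilon \to 0+,
\end{equation*}
for some $\xi > 0$ and restrict to the discrete sequence $\varepsilon_j = 2^{-j} d$ for $j \geq J$ (with $J$ large enough that $2^{-J} d < d/k$, ensuring the target integrability parameters are admissible). Raising to the power $r$, multiplying by $2^{j b r}$ with $b < -1/r$, and summing over $j \geq J$ yields
\begin{equation*}
\Big(\sum_{j \geq J} 2^{j b r} \|f^{\#}_{Q_0}\|_{L_{2^j, r}(Q_0)}^{r}\Big)^{1/r} \lesssim \Big(\sum_{j \geq J} 2^{j(b+\xi) r} \|f\|_{W^k L_{d/(k+2^{-j} d), r}(Q_0)}^{r}\Big)^{1/r},
\end{equation*}
with the obvious modification when $r = \infty$.

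Next I would compute both sides. Using Fubini together with the elementary identity $\sum_{j \geq J} 2^{j A} t^{B/2^{j}} \asymp (-\log t)^{A}$ for $t$ small (as exploited in \eqref{0030303}), the left-hand side is equivalent to $\|f^{\#}_{Q_0}\|_{L_{\infty,r}(\log L)_{b}(Q_0)}$. For the right-hand side, the same change of variable together with the definition of the Lorentz--Zygmund norm through the rearrangement of $|\nabla^{l} f|$ identifies the extrapolation sum with $\|f\|_{W^{k} L_{d/k,r}(\log L)_{b+\xi}(Q_0)}$. Combining, we obtain
\begin{equation*}
\|f^{\#}_{Q_0}\|_{L_{\infty,r}(\log L)_{b}(Q_0)} \lesssim \|f\|_{W^{k} L_{d/k,r}(\log L)_{b+\xi}(Q_0)}
\end{equation*}
for every $b < -1/r$, which is exactly the hypothesis of Proposition \ref{CorollaryLimitingBesovMaxSecondOptimal}. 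That proposition then forces $\xi \geq 1/r$, closing the argument.

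The main obstacle will be the bookkeeping in the extrapolation step, specifically the verification that the discrete sum $\sum_{j \geq J} 2^{j(b+\xi) r} \|f\|_{W^{k} L_{d/(k+2^{-j}d),r}(Q_0)}^{r}$ is genuinely equivalent to $\|f\|_{W^{k} L_{d/k,r}(\log L)_{b+\xi}(Q_0)}^{r}$; for this I need the monotonicity of $|\nabla^{l}f|^{*}$ in the definition of the Lorentz norm together with the asymptotic calculation for sums of the form $\sum_j 2^{j A} t^{B/2^j}$, exactly as in the proof of Corollary \ref{CorollaryLimitingBesovMaxSecond}. Everything else is either already established in the paper or is an immediate consequence of the interpolation machinery developed in Section~\ref{SectionExtrapol}.
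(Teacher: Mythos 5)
Your proposal matches the paper's own argument exactly: the forward direction is obtained by running the extrapolation scheme of Corollary~\ref{CorollaryLimitingBesovMaxSecond} and then invoking Proposition~\ref{CorollaryLimitingBesovMaxSecondOptimal}, while the backward direction follows from Corollary~\ref{CorDeVoreDerExtrapol}. One small slip: for $\varepsilon\in(0,1)$ and $\xi\geq 1/r$ the monotonicity you need is $\varepsilon^{-1/r}\leq\varepsilon^{-\xi}$, not $\varepsilon^{-\xi}\leq\varepsilon^{-1/r}$ as written.
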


\bigskip
\section{Comparison between Theorems \ref{ThmDeVoreLorentz} and \ref{ThmDeVoreDer}}

Let $1 < p, q < \infty$. Assume $f \in C^\infty_0(\R^d)$. We have shown in Theorems \ref{ThmDeVoreLorentz} and \ref{ThmDeVoreDer} that
	\begin{equation}\label{Comp1}
	t^{-d/p} \Big(\int_0^{t^d} (u^{1/p} f^{\# *}(u))^q \, \frac{du}{u} \Big)^{1/q} \lesssim \sup_{ t < u < \infty} u^{-d/p} \omega_k(f,u)_{p,q}, \quad k \geq d/p,
	\end{equation}
and
	\begin{align}
	t^{-d/p} \Big(\int_0^{t^d} (u^{1/p} f^{\# *}(u))^q \, \frac{du}{u} \Big)^{1/q} & \lesssim t^{-d/p} \Big(\int_0^{t^d} (u^{1/r} |\nabla^k f|^*(u))^q \, \frac{du}{u} \Big)^{1/q} \nonumber\\
	& \hspace{1cm}+ \sup_{t^d < u < \infty} u^{k/d} |\nabla^k f|^*(u), \quad k < d (1-1/p),  \label{Comp2}
	\end{align}
	where $1/r = 1/p + k/d$.  
 The goal of this section is to study  the interrelations between these estimates. 
  We distinguish four cases.
\\	
	\textsc{Case 1:} If $p > 2$ and $k \in \big(\frac{d}{p}, d \big(1 - \frac{1}{p} \big)\big)$, then  \eqref{Comp1} provides a sharper estimate than \eqref{Comp2}. More precisely, there holds 
	\begin{align}
		 \sup_{ t < u < \infty} u^{-d/p} \omega_k(f,u)_{p,q} & \lesssim t^{-d/p} \Big(\int_0^{t^d} (u^{1/r} |\nabla^k f|^*(u))^q \, \frac{du}{u} \Big)^{1/q} \nonumber \\
		 &\hspace{1cm} + \sup_{t^d < u < \infty} u^{k/d} |\nabla^k f|^*(u). \label{Comp3}
	\end{align}
	Let us show \eqref{Comp3}. According to \cite[Theorem 1.2]{SeegerTrebels}, we have
	\begin{equation*}
		\dot{W}^k L_{d/k,\infty}(\R^d) \hookrightarrow \dot{B}^{d/p}_{\infty} L_{p,q}(\R^d)
	\end{equation*}
	since $d/p <k < d$. Then this embedding and \eqref{ThmDeVoreDer1*} imply
	\begin{equation}\label{Comp4}
		K(t, f; L_{p,q}(Q_0), \dot{B}^{d/p}_{\infty} L_{p,q}(\R^d)) \lesssim K(t, f;
		\dot{W}^k L_{r,q}(\R^d) ,\dot{W}^k L_{d/k,\infty}(\R^d) ).
	\end{equation}
	Recall that (see \eqref{ProofThmDeVoreLorentz3} and \eqref{ThmDeVoreDer3})
	\begin{equation}\label{Comp5}
		K(t, f; L_{p,q}(\R^d), \dot{B}^{d/p}_{\infty} L_{p,q}(\R^d))  \asymp t \sup_{t^{p/d} < u <\infty} u^{-d/p} \omega_k(f,u)_{p,q}
	\end{equation}
	and
	\begin{align}
	K(t, f;
		\dot{W}^k L_{r,q}(\R^d) ,\dot{W}^k L_{d/k,\infty}(\R^d) ) & \asymp \Big(\int_0^{t^p} (u^{1/r} |\nabla^k f|^*(u))^q \, \frac{du}{u} \Big)^{1/q} \nonumber \\
		& \hspace{1cm}+ t \sup_{t^p < u < \infty} u^{k/d} |\nabla^k f|^*(u). \label{Comp6}
	\end{align}
	Combining \eqref{Comp4}--\eqref{Comp6}, we conclude the desired estimate \eqref{Comp3}.
	\\
	\textsc{Case 2:} Let $p > 2$ and $k=d/p$ (and so, $k < d(1-1/p)$ and $r=p/2$). Under these assumptions, it turns out that \eqref{Comp1} and \eqref{Comp2} are independent of each other. More precisely, we will show  that
the right-hand side expressions in \eqref{Comp1} and \eqref{Comp2}, i.e.,
		\begin{equation*}
		 I(t) =  t^{-k} \omega_{k}(f,t)_{p,q}
	\end{equation*}
	and
	\begin{equation*}
		J(t) =t^{-d/p} \Big(\int_0^{t^d} (u^{1/r} |\nabla^k f|^*(u))^q \, \frac{du}{u} \Big)^{1/q} + \sup_{t^d < u < \infty} u^{k/d} |\nabla^k f|^*(u),
	\end{equation*}
	are not comparable. This will be shown by contradiction. Assume first that
	\begin{equation}\label{Comp7}
		I(t) \leq C J(t),
	\end{equation}
	where $C$ is a positive constant which is independent of $t \in (0,1)$. Since
	\begin{equation*}
		t^{-d/p} \Big(\int_0^{t^d} (u^{1/r} |\nabla^k f|^*(u))^q \, \frac{du}{u} \Big)^{1/q} \lesssim \sup_{0 < u < \infty} u^{k/d}  |\nabla^k f|^*(u),
	\end{equation*}
	we have
	\begin{equation*}
		\sup_{t^d < u < \infty} u^{k/d} |\nabla^k f|^*(u) \leq J(t) \lesssim \sup_{0 < u < \infty} u^{k/d} |\nabla^k f|^*(u).
	\end{equation*}
Letting $t\to 0$, 
  we derive (recall that $k=d/p$)
	\begin{equation*}
		\lim_{t \to 0+} J(t) \asymp \sup_{0 < u < \infty} u^{k/d} |\nabla^k f|^*(u) = \|\,|\nabla^k f| \,\|_{L_{p,\infty}(\R^d)}.
	\end{equation*}
	On the other hand, by \eqref{ProofLemmaEmbBMOLorentzState1*}, we find that
	\begin{equation*}
	\lim_{t \to 0+} I(t) \asymp \lim_{t \to 0+} t^{-1} K(t, f; L_{p,q}(\R^d), \dot{W}^k L_{p,q}(\R^d)) \asymp \|\,|\nabla^k f| \,\|_{L_{p,q}(\R^d)},
	\end{equation*}
	where in the last step we used the fact that the space $\dot{W}^k L_{p,q}(\R^d)$ is reflexive (see \cite[Theorem 1.4,  p. 295]{BennettSharpley}). Then, by \eqref{Comp7}, we arrive at
	\begin{equation*}
		\dot{W}^k L_{p,\infty}(\R^d) \hookrightarrow \dot{W}^k L_{p,q}(\R^d),
	\end{equation*}
	which fails to be true because $q < \infty$.
	
	Suppose now that
	\begin{equation}\label{Comp8}
		J(t) \leq C I(t).
	\end{equation}
We observe that 
	\begin{equation*}
		\sup_{t^d < u < \infty} u^{k/d} |\nabla^k f|^*(u) \lesssim t^{-d/p}  \Big(\int_0^{\infty} (u^{1/r} |\nabla^k f|^*(u))^q \, \frac{du}{u} \Big)^{1/q}.
	\end{equation*}
	This yields 
	\begin{equation*}
		\Big(\int_0^{t^d} (u^{1/r} |\nabla^k f|^*(u))^q \, \frac{du}{u} \Big)^{1/q} \leq t^k J(t) \lesssim \Big(\int_0^\infty (u^{1/r} |\nabla^k f|^*(u))^q \, \frac{du}{u} \Big)^{1/q}
	\end{equation*}
	and thus
	\begin{equation*}
		\lim_{t \to \infty} t^k J(t) \asymp \Big(\int_0^\infty (u^{1/r} |\nabla^k f|^*(u))^q \, \frac{du}{u} \Big)^{1/q} = \|\,|\nabla^k f| \,\|_{L_{r,q}(\R^d)}.
	\end{equation*}
	Therefore passing to the limit as $t \to \infty$ in \eqref{Comp8}, we obtain
	\begin{equation*}
		\|\,|\nabla^k f| \,\|_{L_{r,q}(\R^d)} \lesssim \lim_{t \to \infty} \omega_k(f,t)_{p,q} \lesssim \|f\|_{L_{p,q}(\R^d)},
	\end{equation*}
	which yields  the desired contradiction.
	\\
	\textsc{Case 3:} Assume that either $p > 2$ and $k \not \in \big[\frac{d}{p}, d \big(1 - \frac{1}{p} \big)\big)$ or $p=2$. Then in the case when $k \geq  d \big(1 - 1/p \big)$ we apply  \eqref{Comp1} and if $k < d/p$ we use \eqref{Comp2}.
	\\
	\textsc{Case 4:} Suppose $1 < p < 2$ (and so, $d(1-1/p) < d/p$). On the one hand, if $k \geq d/p$ then \eqref{Comp1} holds true. On the other hand, \eqref{Comp2} can be invoked whenever $k < d(1-1/p)$. In the remaining parameter range $k \in \big[d(1-1/p), d/p\big)$, \eqref{Comp1} and \eqref{Comp2} cannot be applied. 


\section*{Appendix A. Some integral properties of slowly varying functions}
Here we show that for any $0 < q \leq \infty$ there exists a slowly varying function  $b$ defined on $(B,\infty)$ such that
$\int_B^{\infty} (b(u))^q \frac{du}{u} < \infty$
and, for any  $p>0$,
\begin{equation*}
\frac{\int_t^{\infty} (b(u))^q \frac{du}{u}}{\int_{t (\log t)^p}^{\infty} (b(u))^q \frac{du}{u}}\rightarrow \infty
\quad\mbox{as}\quad t\to\infty.
	\end{equation*}
\begin{proof}
Let $q < \infty$. Applying change of variables, matters reduce to the case $q=1$. Consider
$$b(x)=\exp\left\{-\int_A^{\log x}\frac{1}{\sqrt{\log t}} dt\right\}, \quad x > e^{A}.$$
It is clearly a slowly varying function since
$$1\leqslant \frac{b(x)}{b(cx)}
\leq \exp\left\{\log c \frac{1}{\sqrt{\log\log x}}\right\} \rightarrow  1\quad\mbox{as}\quad x\to\infty$$
for all $c \geq 1$.
Moreover, the condition
$\int_B^{\infty} b(u) \frac{du}{u} < \infty$ follows from
the estimate $b(x)\leq (\log x)^{-2}$ for sufficiently large $x$,
which can be checked straightforwardly.

Now we fix $p>0$ and  $C>0$.
We note that
\begin{equation}\label{sv1}
\frac{b(x)}{b(x (\log x)^p)}\rightarrow \infty
\quad\mbox{as}\quad x\to\infty.
	\end{equation}
 Then there is  $M>0$ (assume that  $M>e^{2^{1/p}}$) such that for $x>M$ there holds
 $b(x)>C b(x (\log x)^p)$.

We denote  $h_0(t)=t,\;h_{k+1}(t)=h_k(t)(\log h_k(t))^p$ for $k\geq 0$. Then  
 $t (\log t)^p >2t$ for large $t$ implies $h_{k+1}(t) > 2 h_k(t)$ for $k\geq 0$ and then
$$\int_t^{\infty} b(u) \frac{du}{u}=\sum_{k=0}^{\infty}\int_{h_k(t)}^{h_{k+1}(t)} b(u) \frac{du}{u}=:\sum_{k=0}^{\infty} H_k(t),$$
$$\int_{t (\log t)^p}^{\infty} b(u)\frac{du}{u}=\sum_{k=1}^{\infty}\int_{h_k(t)}^{h_{k+1}(t)} b(u) \frac{du}{u}=\sum_{k=1}^{\infty} H_k(t).
$$
Further, for $x>M$ we have
$$
\int_x^{x (\log x)^p} b(u) \frac{du}{u}
\geq C
\int_x^{x (\log x)^p} b(u (\log u)^p) \frac{du}{u}\geq
\frac{C}{2} \int_{x (\log x)^p}^{x (\log x)^p (\log(x (\log x)^p))^p} b(u) \frac{du}{u}.
$$
Since  $h_k(t)\geq t$ for any $k\geq 0$, this yields that for $t>\max\{e,e^p,M\}$ there holds
$H_k(t)\geq \frac{C}{2}H_{k+1}(t),$
which implies
$$\int_t^{\infty} b(u) \frac{du}{u}\geq \frac{C}{2} \int_{t (\log t)^p}^{\infty} b(u) \frac{du}{u}.$$
Since, by (\ref{sv1}), $C>0$ is arbitrary, we conclude the proof.

The previous reasoning can be easily adapted to the case $q=\infty$.
\end{proof}

{\bf{Acknowledgements} }
We would like to thank Kristina Oganesyan for useful remarks. The first author was partially supported by MTM 2017-84058-P. The second
author was partially supported by
 MTM 2017-87409-P,  2017 SGR 358, and
 the CERCA Programme of the Generalitat de Catalunya. Part of this work was done during the visit of the authors to the Isaac Newton Institute for Mathematical Sciences, Cambridge, EPSCR Grant no EP/K032208/1.

\end{document}